\title[Weihrauch Degrees and Omniscience Principles]
      {Weihrauch Degrees, Omniscience Principles\\ and Weak Computability}
\author{Vasco Brattka}
\author{Guido Gherardi}
\address{Laboratory of Foundational Aspects of Computer Science\\ Department of Mathematics \& Applied Mathematics\\
University of Cape Town\\ Rondebosch 7701, South Africa} 
\address{Dipartimento di Filosofia\\ Universit\`a di Bologna\\ Italy}
\email{Vasco.Brattka@uct.ac.za}
\email{Guido.Gherardi@unibo.it}
\urladdr{http://cca-net.de/}
\thanks{This project has been supported by the Italian Ministero degli Affari Esteri and the National Research Foundation of South Africa (NRF)}
\def\AA{{\mathcal A}}
\def\BB{{\mathcal B}}
\def\CC{{\mathcal C}}
\def\FF{{\mathcal F}}
\def\GG{{\mathcal G}}
\def\KK{{\mathcal K}}
\def\OO{{\mathcal O}}
\def\RR{{\mathcal R}}
\def\WW{{\mathcal W}}
\def\IN{{\mathbb{N}}}
\def\IR{{\mathbb{R}}}
\def\IT{{\mathbb{T}}}
\def\TO{\Longrightarrow}
\def\In{\subseteq}
\def\dmin{-^{\!\!\!\!\cdot}\;}
\def\into{\hookrightarrow}
\def\prefix{\sqsubseteq}
\def\mto{\rightrightarrows}
\def\id{{\rm id}}
\def\dom{{\rm dom}}
\def\range{{\rm range}}
\def\graph{{\rm graph}}
\def\Tr{{\rm Tr}}
\newcommand{\SO}[1]{{{\boldsymbol\Sigma}^0_{#1}}}
\def\LPO{\text{\rm\sffamily LPO}}
\def\LLPO{\text{\rm\sffamily LLPO}}
\def\WKL{\text{\rm\sffamily WKL}}
\def\HBT{\text{\rm\sffamily HBT}}
\def\BF{\text{\rm\sffamily B$_{\rm\mathsf F}$}}
\def\C{\text{\rm\sffamily C}}
\def\LPO{\text{\rm\sffamily LPO}}
\def\LLPO{\text{\rm\sffamily LLPO}}
\def\leqT{\mathop{\leq_{\mathrm{T}}}}
\def\leqM{\mathop{\leq_{\mathrm{M}}}}
\def\leqW{\mathop{\leq_{\mathrm{W}}}}
\def\equivW{\mathop{\equiv_{\mathrm{W}}}}
\def\nequivW{\mathop{\not\equiv_{\mathrm{W}}}}
\def\leqSW{\mathop{\leq_{\mathrm{sW}}}}
\def\equivSW{\mathop{\equiv_{\mathrm{sW}}}}
\def\leqPW{\mathop{\leq_{\widehat{\mathrm{W}}}}}
\def\equivPW{\mathop{\equiv_{\widehat{\mathrm{W}}}}}
\def\nleqW{\mathop{\not\leq_{\mathrm{W}}}}
\def\lW{\mathop{<_{\mathrm{W}}}}
\def\nW{\mathop{|_{\mathrm{W}}}}
\def\bigtimes{\mathop{\mathsf{X}}}
\def\topW{\mathbf{0}}
\def\botW{\mathbf{1}}
\def\emptyW{\boldsymbol{\emptyset}}
\newtheorem{proposition}{Proposition}[section]
\newtheorem{theorem}[proposition]{Theorem}
\newtheorem{lemma}[proposition]{Lemma}
\newtheorem{definition}[proposition]{Definition}
\newtheorem{corollary}[proposition]{Corollary}
\begin{document}

\begin{abstract} 
In this paper we study a reducibility that has been introduced by Klaus Weihrauch or, 
more precisely, a natural extension of this reducibility for multi-valued functions 
on represented spaces.
We call the corresponding equivalence classes Weihrauch degrees and we show that 
the corresponding partial order induces a lower semi-lattice with the disjoint union of multi-valued 
functions as greatest lower bound operation.
We prove that parallelization is a closure operator for this semi-lattice
and the parallelized Weihrauch degrees even form a lattice with the
product of multi-valued functions as greatest lower bound operation.
We show that the Medvedev lattice can be embedded into the parallelized
Weihrauch lattice in a natural way, even into the sublattice of total continuous multi-valued
functions on Baire space and such that greatest lower bounds and least upper
bounds are preserved.
As a consequence we obtain that Turing degrees can be embedded
into the single-valued part of this sublattice.
The importance of Weihrauch degrees is based on the fact that multi-valued functions on 
represented spaces can be considered as realizers of 
mathematical theorems in a very natural way and studying the Weihrauch reductions between 
theorems in this sense means to ask which theorems can be 
transformed continuously or computably into each other. 
This allows a new purely topological or computational approach to 
metamathematics that sheds new light on the nature of theorems.
As crucial corner points of this classification scheme we study the 
limited principle of omniscience $\LPO$,
the lesser limited principle of omniscience $\LLPO$ and their parallelizations.
We recall that the parallelized version of $\LPO$ is complete for limit
computable functions (which are exactly the effectively $\SO{2}$--measurable
functions in the Borel hierarchy).
We prove that parallelized $\LLPO$
is equivalent to Weak K\H{o}nig's Lemma and hence to the Hahn-Banach Theorem
in this new and very strong sense.
We call a multi-valued function weakly computable
if it is reducible to the Weihrauch degree of parallelized $\LLPO$ and we
present a new proof that the class of weakly computable operations
is closed under composition. This proof is based on a computational
version of Kleene's ternary logic. Moreover, we characterize weakly
computable operations on computable metric spaces as operations that
admit upper semi-computable compact-valued selectors and we prove
that any single-valued weakly computable operation is already computable
in the ordinary sense.
\end{abstract}

\keywords{Computable analysis, constructive analysis, reverse mathematics, effective descriptive set theory} 

\subjclass[2000]{03F60,03D30,03B30,03E15}
% 03F60 Constructive and recursive analysis
% 03D30 Other degrees and reducibilities
% 03B30 Foundations of classical theories (including reverse mathematics)
% 03E15 Descriptive set theory

\maketitle

\section{Introduction}

The purpose of this paper is to propose a new computational approach to metamathematics
that is based on the classification of mathematical theorems according to their computational content. 
Such an approach started with a classification of the Weihrauch degree of the 
Hahn-Banach Theorem in \cite{GM09} and the intention here
is to lay some careful foundations for further studies.
In a following paper \cite{BG09b} we analyze certain choice principles and 
we present a case study with a classification on many theorems from analysis.

Essentially, the idea is to ask which theorems can be continuously or even computably transferred
into each other. In order to give a meaningful interpretation to this idea
we consider mathematical theorems as multi-valued operations $F:X\mto Y$ that 
map certain input data $X$ into certain output data $Y$. Such a perspective
is very natural, since many theorems in mathematics are actually $\Pi_2$ theorems,
hence they have the logical form
\[(\forall x\in X)(\exists y\in Y)\;(x,y)\in A\]
and one can just consider $F:X\mto Y$ as a realizer or multi-valued Skolem
function for this statement. 

The appropriate technical tool to study whether two such potentially partial multi-valued functions 
$F:\In X\mto Y$ and $G:\In X\mto Y$ can be continuously or computably transferred
into each other is Weihrauch reducibility.
This is a reducibility that has been introduced by Klaus Weihrauch around
1990 in two unpublished papers \cite{Wei92a,Wei92c} and since then
it has been studied by several others (see for instance \cite{Ste89,Myl92,Her96,Bra99,Bra05,Myl06,GM09,BG09b,Pau09}).

Originally, this reducibility has been introduced for single-valued functions on Baire space.
Basically, the idea is to say that $F$ is {\em strongly Weihrauch reducible} to $G$, 
in symbols $F\leqSW G$, if there
are computable (or alternatively continuous) functions $H$ and $K$ such that
\[F=H\circ G\circ K.\]
Thus, $K$ acts as an input modification and $H$ acts as an output modification.
We will mainly consider the computable version of this reduction here since
the positive reduction results are stronger. For negative results the topological version
of the reduction is stronger and indeed reductions typically fail for continuity reasons.
However, such topological results can usually be derived from computational results by relativization.

It turns out that the strong version of Weihrauch reducibility is slightly
too strong for many purposes, since it distinguishes too many functions. 
For instance the identity cannot be reduced to a constant function in this
way, since there is no way to feed the input through a constant function.
This is the reason why the more important reducibility is the one where we
say that $F$ is {\em Weihrauch reducible} to $G$, in symbols $F\leqW G$,
if there are computable functions $H$ and $K$ such that
\[F=H\circ\langle\id,G\circ K\rangle.\]
Here and in the following $\langle\;\rangle$ denotes suitable finite
or infinite tupling functions.
Thus, the difference is that the input is fed through to the outer function $H$
independently of $G$. Another perspective to look at this reducibility is
to say that it is the cylindrification of strong Weihrauch reducibility,
a fact which we prove in Section~\ref{sec:product-sum}.
In this sense strong and ordinary Weihrauch reducibility are related to each
other as one-one and many-one reducibility in classical computability theory,
where a similar characterization with cylindrification is well-known 
(see \cite{Soa87,Odi89} for classical computability theory).

Weihrauch \cite{Wei92a,Wei92c} has already studied an extended version of his reducibility to sets $\FF$ and $\GG$
of functions on Baire space and $\FF$ is called {\em Weihrauch reducible} to $\GG$,
in symbols $\FF\leqW\GG$, if there are computable functions $H$ and $K$ such that
\[(\forall G\in\GG)(\exists F\in\FF)\;F=H\langle\id,GK\rangle.\]
That is, any function $G\in\GG$ computes some function $F\in\FF$ and
the computation is performed uniformly with two fixed computable $H$ and $K$.
This extension of Weihrauch reducibility is related to ordinary
Weihrauch reducibility exactly as Medvedev reducibility is related
to Turing reducibility. 

We use this concept to extend Weihrauch reducibility even further to 
multi-valued operations $f:\In X\mto Y$ on represented spaces $X$ and $Y$.
Roughly speaking, such an $f$ is {\em Weihrauch reducible} to an analogous $g$,
in symbols $f\leqW g$, if the set of realizers of $f$ is reducible to the set of 
realizers of $g$ in the above mentioned sense of Weihrauch reducibility for sets, i.e.\
\[\{F:F\vdash f\}\leqW\{G:G\vdash g\}.\]
Here a single-valued $F$ on Baire space is called a {\em realizer} of $f$, in symbols $F\vdash f$,
if $F$ computes a name $F(p)$ of some output value in $f(x)$, given some name $p$ of $x$.
This generalization of Weihrauch reducibility was introduced for single-valued functions in \cite{Bra05}
and for multi-valued functions in \cite{GM09}. We call the corresponding equivalence classes
{\em Weihrauch degrees}. 

Compared to strong Weihrauch reducibility, the ordinary version of Weih\-rauch
reducibility has exactly the right degree of precision, it distinguishes exactly
what should be distinguished computationally, but not more.
Among all functions (with at least one computable point in the domain)
the computable ones form the least degree, i.e.\ they are reducible to all other such functions.
For the continuous version of Weihrauch reducibility exactly the continuous
functions form the least degree (among all functions with non-empty domain).

We study some basic properties of Weihrauch reducibility and of Weihrauch degrees in Section~\ref{sec:reducibility}.
In Section~\ref{sec:product-sum} we investigate the product operation $f\times g$
and the direct sum $f\oplus g$ of multi-valued operations and we show that both
operations are monotone with respect to Weihrauch reducibility.
While the product preserves single-valuedness, the disjoint union does not and
hence it requires multi-valuedness to be meaningful.
Among other things we prove in Section~\ref{sec:product-sum} that the partial
order on Weihrauch degrees induces a lower semi-lattice with direct sums as
greatest lower bounds.

An important operation on multi-valued functions is {\em parallelization} $\widehat{f}$ that we study in 
Section~\ref{sec:parallelization}
and it just means to take countably many copies of the function $f$ in parallel, i.e.\
\[\widehat{f}(x_0,x_1,x_2,...):=f(x_0)\times f(x_1)\times f(x_2)\times...\]
If $f$ is defined on Baire space $\IN^\IN$, then we sometimes compose parallelization
with an infinite tupling function. This is convenient, but does not affect the operation in any essential way.
We prove that this operation forms a closure operator with respect to Weihrauch
reducibility and we get a natural parallelized version of Weihrauch
reducibility.
The parallelized Weihrauch degrees together with their partial order even
form a lattice with the product as least upper bound operation\footnote{Independently, 
Arno Pauly \cite{Pau09} has recently proved 
that another operation on functions that takes direct sums on the input and
output side yields a supremum even in the non-parallelized case. He has also proved
that the corresponding upper semi-lattice is distributive.}.

In Section~\ref{sec:embedding} we prove that the Medvedev lattice can be embedded
into the Weihrauch lattice such that least upper bounds and greatest lower bounds
are preserved. This embedding only requires total and continuous multi-valued operations
on Baire space. As a consequence, we obtain that Turing degrees can be embedded such that
least upper bounds are preserved and this embedding only requires total and continuous
single-valued functions on Baire space.

In Section~\ref{sec:omniscience} we start to study the 
the {\em limited principle of omniscience} $\LPO$
and the {\em lesser limited principle of omniscience} $\LLPO$ in the 
upper semi-lattice of Weihrauch reducibility. Such a study has also
already been initiated by Weihrauch \cite{Wei92c}. The principles
themselves have originally been introduced by Brouwer and Bishop
in constructive mathematics \cite{BB85,BR87a}.
Roughly speaking, $\LPO$ corresponds to the law of the excluded middle $(A\vee\neg A)$ 
and $\LLPO$ to de Morgan's law $\neg (A\wedge B)\iff(\neg A\vee\neg B)$, both
restricted to simple existential statements. More precisely, they are stated as 
follows:

\begin{itemize}
\item (\LPO) For any sequence $p\in\IN^\IN$ there exists an $n\in\IN$ such that $p(n)=0$ or $p(n)\not=0$ for all $n\in\IN$.
\item (\LLPO) For any sequence $p\in\IN^\IN$ such that $p(k)\not=0$ for at most one $k\in\IN$, 
it follows $p(2n)=0$ for all $n\in\IN$ or $p(2n+1)=0$ for all $n\in\IN$.
\end{itemize}

If we interpret them as mathematical theorems in the above mentioned
sense, then we can consider $\LPO:\IN^\IN\to\IN$ and $\LLPO:\In\IN^\IN\mto\IN$
as operations and characterize their Weihrauch degree.
In Section~\ref{sec:LLPO} we show that $\LPO$ is strictly Weihrauch reducible to 
$\LLPO$ (which was also already proved by Weihrauch) and the parallelized
version $\widehat{\LPO}$ of $\LPO$ is computably complete for the Borel class
of effectively $\SO{2}$--measurable operations \cite{Bra05}. This completely characterizes
the Weihrauch degree of $\widehat{\LPO}$. 

In Section~\ref{sec:LLPO} we prove that also the parallelized version $\widehat{\LLPO}$ of $\LLPO$
has some very interesting properties. Firstly, $\widehat{\LLPO}$ is closed
under composition and secondly we use a computable version of Kleene's 
ternary logic to show that the whole cone of multi-valued operations under $\widehat{\LLPO}$
is closed under composition (a result which was already proved in a different way in \cite{GM09}).

This justifies to give a special name to the operations $F:\In X\mto Y$ below $\widehat{\LLPO}$
and we call them {\em weakly computable}. In Section~\ref{sec:WKL}
we show that the Weihrauch
degree of Weak K\H{o}nig's Lemma is the same as that of $\widehat{\LLPO}$
and hence it follows from results in \cite{GM09} that it is identical
to the Weihrauch degree of the Hahn-Banach Theorem. 
We also prove that compact choice has the same Weihrauch degree and we
derive from this result that the weakly computable operations on computable
metric spaces $X,Y$ are exactly those that admit an upper semi-computable
compact-valued selector. Finally, we prove that this implies that
any single-valued weakly computable operation is already computable in the ordinary
sense.

This has surprising algorithmic consequences. Any ``algorithm'' that uses weakly computable
operations such as 
$x\leq0\mbox{ or }x\geq 0$
leads to a uniformly computable result, as long as the result is uniquely determined, i.e.\
single-valued. And this is so, although these operations are typically discontinuous
and non-computable.

In the Conclusions we discuss the relevance of the observations made in this
paper. In particular, we claim that the classification of the Weihrauch degree
of mathematical theorems sheds some new light on these theorems. 
The study that has been started in this paper is continued in \cite{BG09b}.

\section{Weihrauch reducibility of multi-valued operations}
\label{sec:reducibility}

In this section we define Weihrauch reducibility for multi-valued
functions on represented spaces and we study some basic properties of it.
In a first step we define the concept for sets of functions on Baire space,
as it was already considered by Weihrauch \cite{Wei92a,Wei92c}.

\begin{definition}[Weihrauch reducibility]\rm
Let $\mathcal F$ and $\mathcal G$ be sets of functions of type $F:\In\IN^\IN\to\IN^\IN$.
We say that $\FF$ is {\em Weihrauch reducible} to $\GG$,
in symbols $\mathcal F\leqW\mathcal G$, if there are computable 
functions $H,K:\In\IN^\IN\to\IN^\IN$ such that
\[(\forall G\in\mathcal G)(\exists F\in\mathcal F)\;F=H\langle\id,GK\rangle.\]
Analogously, we define $\FF\leqSW\GG$ using the equation $F=HGK$ and in this
case we say that $\FF$ is {\em strongly Weihrauch reducible} to $\GG$.
\end{definition}

Here $\langle\;\rangle:\IN^\IN\times\IN^\IN\to\IN^\IN$ denotes
a computable standard pairing function \cite{Wei00}.
That is, this reducibility is derived from Weihrauch reducibility of functions
in the same way as Medvedev reducibility is derived from Turing reducibility
in classical computability theory \cite{Rog67}.
We denote the induced equivalence relations by $\equivW$ and $\equivSW$, respectively.

In the next step we define the concept of a realizer of a multi-valued function
as it is used in computable analysis \cite{Wei00}. We recall that a {\em representation}
$\delta_X:\In\IN^\IN\to X$ of a set $X$ is a surjective (and potentially partial) map.
In this situation we say that $(X,\delta_X)$ is a {\em represented space}.

\begin{definition}[Realizer]\rm
Let $(X,\delta_X)$ and $(Y,\delta_Y)$ be represented spaces
and let $f:\In X\mto Y$ be a multi-valued function.
Then $F:\In\IN^\IN\to\IN^\IN$ is called a {\em realizer} of $f$ with respect to
$(\delta_X,\delta_Y)$, in symbols $F\vdash f$, if 
\[\delta_YF(p)\in f\delta_X(p)\]
for all $p\in\dom(f\delta_X)$.
\end{definition}

Usually, we do not mention the representations explicitly 
since they will be clear from the context.
A multi-valued function $f:\In X\mto Y$ on represented spaces
is called {\em continuous} or {\em computable}, if it has a continuous
or computable realizer, respectively.
Using reducibility for sets and the concept of a realizer we can now
define Weihrauch reducibility for multi-valued functions.

\begin{definition}[Realizer reducibility]\rm
Let $f$ and $g$ be multi-valued functions on represented spaces. 
Then $f$ is said to be {\em Weihrauch reducible}
to $g$, in symbols $f\leqW g$, if and only if $\{F:F\vdash f\}\leqW\{G:G\vdash g\}$.
Analogously, we define $f\leqSW g$ with the help of $\leqSW$ on sets.
\end{definition}

That is, $f\leqW g$ holds if any realizer of $g$ computes some realizer of $f$
with some fixed uniform translations $H$ and $K$.
This reducibility has already been used in \cite{Bra05} for single-valued
maps and in \cite{GM09} for multi-valued maps.
We mention that 
we also write $f\lW g$ if and only if $f\leqW g$
and $g\nleqW f$. Moreover, we write $f\nW g$ if $f\nleqW g$
and $g\nleqW f$. Analogous notations are used for $\leqSW$.
It is clear that Weihrauch reducibility and its strong version form 
preorders, i.e.\ both relations are reflexive and transitive.
We use standard computable tupling functions $\pi:\IN^\IN\times\IN^\IN\to\IN^\IN$
and $\pi':(\IN^\IN)^\IN\to\IN^\IN$ and the function values are denoted
by $\langle p,q\rangle:=\pi(p,q)$ and $\langle p_0,p_1,p_2,...\rangle:=\pi'(p_i)_{i\in\IN}$.
By $\pi_i:\IN^\IN\to\IN^\IN$ we denote the computable projections given by
$\pi_1\langle p,q\rangle:=p$ and $\pi_2\langle p,q\rangle=q$.
For functions $F,G:\In\IN^\IN\to\IN^\IN$ we define $F\otimes G:\In\IN^\IN\to\IN^\IN$
by $(F\otimes G)\langle p,q\rangle:=\langle F(p),G(q)\rangle$ for all $p,q\in\IN^\IN$
and $\langle F,G\rangle:\In\IN^\IN\to\IN^\IN$ by 
$\langle F,G\rangle(p):=\langle F(p),G(p)\rangle$ for all $p\in\IN^\IN$.

\begin{lemma}[Preorders]
\label{lem:preorder}
The relations $\leqW$ and $\leqSW$ on multi-valued functions on represented spaces 
are both preorders, i.e.\ they are reflexive and transitive.
Hence $\equivW$ and $\equivSW$ are equivalence relations.
\end{lemma}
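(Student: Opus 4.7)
The plan is to verify reflexivity and transitivity directly from the definitions, first at the level of sets of functions on Baire space; by the realizer definition of $\leqW$ and $\leqSW$ for multi-valued functions, both properties then transfer automatically to the setting of represented spaces. The claim that $\equivW$ and $\equivSW$ are equivalence relations follows formally, since the symmetric closure of any preorder is an equivalence relation, so no separate argument is needed for that part.

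For reflexivity, the strong case is immediate by taking $H := K := \id$, since then $H G K = G$ remains a realizer of $f$ whenever $G$ is. For $f \leqW f$ I would use $K := \id$ and $H := \pi_2$, the second projection from the computable standard pairing defined just above the lemma, so that $H\langle\id, G K\rangle = G$ for every realizer $G$ of $f$. Both choices are computable.

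For transitivity, suppose $f \leqW g$ is witnessed by computable $H_1, K_1$ and $g \leqW h$ by computable $H_2, K_2$. Given any $L \vdash h$, the assumptions produce $G := H_2\langle\id, L K_2\rangle \vdash g$ and then $F := H_1\langle\id, G K_1\rangle \vdash f$, which unfolds to
\[
F(p) = H_1\bigl\langle p,\, H_2\langle K_1(p),\, L(K_2 K_1(p))\rangle\bigr\rangle.
\]
The natural witnesses are $K := K_2 \circ K_1$ together with $H\langle p, q\rangle := H_1\langle p, H_2\langle K_1(p), q\rangle\rangle$; both are computable by composition, and they satisfy $F = H\langle\id, L K\rangle$. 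For the strong version the computation collapses to $F = (H_1 H_2)\, L\, (K_2 K_1)$, so one simply reads off $H := H_1 \circ H_2$ and $K := K_2 \circ K_1$.

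I do not foresee a genuine obstacle; the argument is essentially bookkeeping of compositions and pairings. The one subtle point is that in the non-strong reduction the identity thread $p$ must still reach $H_1$ on the outer left, so $K_1(p)$ has to be recomputed inside the new $H$ rather than absorbed into $K$. Once this is arranged, transitivity is just substitution, and together with reflexivity the two preorder statements and the corresponding equivalence-relation statements follow at once.
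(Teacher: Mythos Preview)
Your proposal is correct and essentially identical to the paper's proof: the paper also uses $K=\id$, $H=\pi_2$ for reflexivity, and for transitivity composes the inner translations as $K'':=KK'$ while defining the outer map by $H''\langle p,q\rangle=H'\langle p,H\langle K'(p),q\rangle\rangle$ (written there as $H'':=H'\langle\pi_1,H(K'\otimes\id)\rangle$), which is exactly your $H\langle p,q\rangle=H_1\langle p,H_2\langle K_1(p),q\rangle\rangle$ up to renaming. Your remark about having to recompute $K_1(p)$ inside the new $H$ is precisely the point the paper handles with the $K'\otimes\id$ factor.
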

\begin{proof}
If one chooses $H=\pi_2$
and $K=\id$, then one obtains $H\langle\id,GK\rangle=G$ for any function
$G:\In\IN^\IN\to\IN^\IN$. Hence $\leqW$ is reflexive on sets and hence on multi-valued
operations.

Now let $e:\In W\mto Z$, $f:\In X\mto Y$ and $g:\In U\mto V$ be 
multi-valued operations of represented spaces. If $e\leqW f$ and $f\leqW g$ holds,
then there are computable functions $H,K,H',K':\In\IN^\IN\to\IN^\IN$
such that $H\langle\id,GK\rangle$ is a realizer of $f$ for any realizer $G$ of $g$
and $H'\langle\id,FK'\rangle$ is a realizer of $e$ for any realizer $F$ of $f$.
Then $H'':=H'\langle\pi_1,H(K'\otimes\id)\rangle$ and $K'':=KK'$ are both computable and 
\[
H''\langle\id,GK''\rangle
%= H'\langle\pi_1,H(K'\otimes\id)\rangle\langle\id,GKK'\rangle
= H'\langle\id,H(K'\otimes\id)\langle\id,GKK'\rangle\rangle
%= H'\langle\id,H\langle K',GKK'\rangle\rangle
= H'\langle\id,H\langle\id,GK\rangle K'\rangle
\]
is a realizer of $e$ for any realizer $G$ of $g$. Thus, $e\leqW g$.
Analogously, one can show that $\leqSW$ is reflexive and transitive.
\end{proof}

We did not specify exactly what the domain of our relations $\leqW$
and $\leqSW$ is. If one chooses the class of all multi-valued operations
on represented spaces, then one does not obtain a set. 
This is the reason why we assume from now on that we have some given
set $\RR$ of represented spaces and we consider all multi-valued operations
between them. 

\begin{definition}[Weihrauch degree]\rm
A {\em Weihrauch degree} is an equivalence classes with respect to 
$\equivW$ of all multi-valued operations $f:\In X\mto Y$ on
represented spaces $X,Y\in\RR$. 
\end{definition}

It is clear that $\leqW$ induces a partial order on Weihrauch degrees.
It is a straightforward observation that strong Weihrauch reducibility
is actually stronger than the ordinary one and both reducibilities 
preserve continuity and computability.

\begin{proposition}
\label{prop:reducibility-computability}
Let $f$ and $g$ be multi-valued functions on represented spaces. Then 
\begin{enumerate}
\item $f\leqSW g\TO f\leqW g$,
\item $f\leqW g$ and $g$ computable $\TO$ $f$ computable,
\item $f\leqW g$ and $g$ continuous $\TO$ $f$ continuous.
\end{enumerate}
\end{proposition}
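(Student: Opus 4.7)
The plan is to derive all three implications directly from the definitions of $\leqW$ and $\leqSW$, using only closure of the class of computable (resp.\ continuous) functions under composition with the computable tupling maps $\pi, \pi_1, \pi_2, \id$.

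For (1), suppose $f\leqSW g$ via computable $H,K:\In\Baire\to\Baire$, so that for every realizer $G\vdash g$ there exists a realizer $F\vdash f$ with $F=HGK$. I would simply set $K':=K$ and $H':=H\pi_2$, both of which are computable. Then for any $G\vdash g$,
\[
H'\langle\id,GK'\rangle=H\pi_2\langle\id,GK\rangle=HGK=F
\]
is a realizer of $f$, so $f\leqW g$ with translations $(H',K')$.

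For (2) and (3), assume $f\leqW g$ via computable $H,K$, and let $G\vdash g$ be a computable (resp.\ continuous) realizer; such a $G$ exists by the definition of computability (resp.\ continuity) for multi-valued functions on represented spaces. Then the function $F:=H\langle\id,GK\rangle$ is obtained by composing $G$ with computable (resp.\ continuous) operations, and hence it is itself computable (resp.\ continuous). By assumption $F\vdash f$, so $f$ possesses a computable (resp.\ continuous) realizer and is therefore computable (resp.\ continuous).

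There is no real obstacle here; each item is essentially a one-line verification. The only subtlety worth making explicit is that the quantifier structure in the definition of $\leqW$ applied to realizers (``for every realizer of $g$ there is a realizer of $f$, uniformly via fixed $H,K$'') is precisely what allows us in (2) and (3) to instantiate a single computable or continuous realizer of $g$ and transport it into a realizer of $f$ of the same regularity.
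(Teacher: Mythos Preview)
Your proof is correct. The paper does not actually give a proof of this proposition; it simply states ``We leave the straightforward proofs to the reader,'' and your verification is precisely the routine argument the authors had in mind.
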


We leave the straightforward proofs to the reader.
Another observation is that the nowhere defined functions $g:\In X\mto Y$ form
the least Weihrauch degree. This is because any nowhere defined function $g$
has a realizer that is nowhere defined and hence any $f\leqW g$ must also be nowhere
defined. On the other hand, any nowhere defined function is clearly reducible
to any other function.

\begin{lemma}[Nowhere defined function]
\label{lem:nowhere}
The nowhere defined multi-val\-ued functions $f:\In X\mto Y$ form the
least Weihrauch degree.
\end{lemma}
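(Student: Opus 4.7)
The plan is to derive the lemma from a single observation about realizers of nowhere defined functions. By definition $F \vdash f$ means that $\delta_Y F(p) \in f\delta_X(p)$ holds for every $p \in \dom(f\delta_X)$. If $f:\In X\mto Y$ is nowhere defined, then $\dom(f) = \emptyset$ and hence $\dom(f\delta_X) = \emptyset$, so this universal condition is vacuously satisfied by every partial function $F:\In\IN^\IN\to\IN^\IN$. Consequently $\{F : F \vdash f\}$ is the entire class of all partial functions on Baire space.

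With this in hand, let $g:\In U\mto V$ be an arbitrary multi-valued function on represented spaces in $\RR$, and choose $H := \pi_2$ and $K := \id$, both of which are computable. For any realizer $G \vdash g$ one has $H\langle\id,GK\rangle = G$, which is a partial function on Baire space and therefore, by the observation above, automatically a realizer of $f$. The pair $(H,K)$ thus witnesses $f \leqW g$ uniformly in $G$.

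This single reduction yields both halves of the lemma. Applying it with $g$ itself taken to be a nowhere defined function shows that any two nowhere defined functions are mutually Weihrauch reducible and hence lie in a single Weihrauch degree. Applying it to an arbitrary $g$ shows that this degree is $\leqW$-below every other degree, so it is indeed the least. I do not expect any real obstacle: the argument is essentially a direct unpacking of the definitions of realizer and of Weihrauch reducibility for sets, together with the trivial fact that the identity and projection functions are computable.
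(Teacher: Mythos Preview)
Your argument is correct and matches the paper's approach: the paper also observes that a nowhere defined $f$ is ``clearly reducible to any other function'' (your explicit choice $H=\pi_2$, $K=\id$ just spells this out), and from this both directions follow. The only difference is that the paper additionally remarks that the least degree contains \emph{only} nowhere defined functions (via the observation that a nowhere defined $g$ admits the nowhere defined realizer, forcing any $f\leqW g$ to be nowhere defined as well); this extra fact is not required for the lemma as stated, so your proof is complete.
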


Despite this observation we will typically exclude the nowhere defined functions
from our considerations.
We believe that the weaker version $\leqW$ of Weihrauch reducibility
is more useful and, in fact, more natural than the stronger version. 
This is mainly because of the following observation. We recall that 
a point $x\in X$ in a represented space $(X,\delta)$ is called {\em computable},
if there is a computable $p\in\IN^\IN$ such that $\delta(p)=x$.

\begin{lemma}
\label{least:least}
Let $f$ and $g$ be multi-valued functions 
on represented spaces. If $f$ is computable and $\dom(g)$ contains
a computable point, then $f\leqW g$.
\end{lemma}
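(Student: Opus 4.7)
The plan is to construct $H$ and $K$ explicitly by exploiting the two hypotheses. Since $f$ is computable, fix a computable realizer $F_0 : \In \IN^\IN \to \IN^\IN$ of $f$. Since $\dom(g)$ contains a computable point, there is a computable $q \in \IN^\IN$ such that $\delta_U(q) \in \dom(g)$, where $\delta_U$ is the representation on the input side of $g$.

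The idea is to ignore $g$ entirely on the output side and to feed $g$ a fixed computable input on the input side so that the inner composition $GK$ is guaranteed to be defined. Concretely, I would let $K : \IN^\IN \to \IN^\IN$ be the (computable) constant function $K(p) := q$, and let $H := F_0 \circ \pi_1$, which is computable since $F_0$ and $\pi_1$ are. Then for any realizer $G \vdash g$, the point $q$ lies in $\dom(G)$ (because $\delta_U(q)\in\dom(g)$ and $G$ must be defined on every name of every element of $\dom(g)$), so $GK(p) = G(q)$ is defined for all $p$, and
\[
H\langle \id, GK\rangle(p) = F_0(\pi_1\langle p, G(q)\rangle) = F_0(p)
\]
for all $p\in\IN^\IN$. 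Thus $H\langle\id,GK\rangle = F_0$, which is a realizer of $f$. This witnesses $f\leqW g$.

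There is essentially no obstacle here; the only subtle point worth checking is that the definition of reducibility requires $H\langle\id,GK\rangle$ to be a total enough function to be a realizer of $f$, and in particular one must make sure $GK(p)$ is defined for every $p$ in $\dom(f\delta_X)$. This is exactly what the choice of $K$ as a constant with value a name of a point in $\dom(g)$ achieves — without the computable point assumption one could not guarantee that $G$ receives any valid input at all. The restriction to nontrivial domains is therefore essential, mirroring the observation in Lemma~\ref{lem:nowhere}.
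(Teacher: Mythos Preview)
Your proof is correct and essentially identical to the paper's: you choose the same constant $K(p):=q$ for a computable name $q$ of a point in $\dom(g)$ and the same $H:=F_0\pi_1$, and conclude $H\langle\id,GK\rangle=F_0\vdash f$. The only difference is that you add a brief justification that $G(q)$ is defined, which the paper leaves implicit.
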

\begin{proof}
Let $q$ be a computable name of a point in $\dom(g)$. Let us consider a computable realizer
$F:\In\IN^\IN\to\IN^\IN$ of $f$. Then $H:=F\pi_1$ and $K$ with $K(p):=q$ for all $p\in\IN^\IN$
are computable and $H\langle\id,GK\rangle=F$ is a realizer of $f$ for any realizer
$G$ of $g$. Hence $f\leqW g$.
\end{proof}

We obtain as an immediate corollary that among those functions 
with at least one computable point in the domain the computable ones
form the least Weihrauch degree.

\begin{corollary}[Least degree]
\label{cor:least}
Among all multi-valued functions with at least 
one computable point in the domain, the computable ones form the least
Weihrauch degree.
\end{corollary}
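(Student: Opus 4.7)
The plan is to obtain this corollary as an essentially mechanical packaging of the preceding Lemma~\ref{least:least} together with Proposition~\ref{prop:reducibility-computability}(2); no genuinely new argument is required.

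First I would apply Lemma~\ref{least:least} twice: given any two computable multi-valued operations $f_0,f_1:\In X\mto Y$ (say on some fixed or arbitrary represented spaces) whose domains each contain a computable point, the lemma yields both $f_0\leqW f_1$ and $f_1\leqW f_0$, hence $f_0\equivW f_1$. Thus all computable operations in the restricted class collapse into a single Weihrauch degree, which is a necessary preliminary before one can speak of ``the'' degree of the computable operations.

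Next, for an arbitrary multi-valued operation $g:\In U\mto V$ whose domain contains a computable point, a further application of Lemma~\ref{least:least} gives $f\leqW g$ for every computable $f$ in the restricted class. Consequently the degree of the computable operations sits below every degree represented in the restricted class, so it is indeed a least element of the induced partial order.

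Finally, to exclude that some non-computable operation slips into this least degree, I would appeal to Proposition~\ref{prop:reducibility-computability}(2): if $f\equivW g$ and $g$ is computable, then in particular $f\leqW g$ forces $f$ itself to be computable. Hence the least degree coincides exactly with the class of computable operations having a computable point in their domain. The only point requiring any attention — rather than a genuine obstacle — is noticing that this last converse is what upgrades ``a lower bound consisting of computable operations'' to ``the least degree equals the class of computable operations''.
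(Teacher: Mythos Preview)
Your proposal is correct and matches the paper's approach: the paper treats the corollary as immediate from Lemma~\ref{least:least} without spelling out a proof, and your three steps (computable functions are pairwise $\equivW$, they sit below every $g$ in the class, and Proposition~\ref{prop:reducibility-computability}(2) rules out non-computable members of that degree) are exactly the natural unpacking of that immediacy. There is nothing to add or correct.
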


The functions without a computable point in their domain are not very
interesting for most practical purposes. For all multi-valued operations
on represented spaces that are of practical interest, the domain itself
is a represented space with computable points.
For strong Weihrauch reducibility $\leqSW$ the previous result does not
hold true. It is easy to see that for instance the identity cannot 
be strongly reduced to any constant function.
Thus, strong Weihrauch reducibility distinguishes between functions
that we want to consider as essentially equivalent.

We mention that for both reducibilities, Weihrauch reducibility and strong
Weihrauch reducibility, there is a continuous counterpart where the 
reduction functions $H$ and $K$ are replaced by continuous functions.
Some of our results hold analogously for continuous Weihrauch reducibility.
Since we do not want to introduce further symbols for continuous reducibility,
we will typically express such results by saying that a reduction holds
``with respect to some oracle''. That is, we exploit the fact that a function
is continuous if and only if it is computable with respect to some oracle.
Analogously to the previous corollary we obtain the following topological
version.

\begin{corollary}
Among all multi-valued functions with non-empty domain the continuous 
ones form the least Weihrauch degree for Weihrauch reducibility with 
respect to some oracle. 
\end{corollary}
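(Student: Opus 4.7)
The plan is to imitate the proof of Lemma \ref{least:least} and Corollary \ref{cor:least}, but relativize everything to a suitable oracle. The key input is the standard fact from computable analysis that a function $F:\In\IN^\IN\to\IN^\IN$ is continuous if and only if it is computable with respect to some oracle, together with the trivial observation that any concrete name $q\in\IN^\IN$ is computable relative to itself.

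First I would fix multi-valued functions $f:\In X\mto Y$ and $g:\In U\mto V$ on represented spaces, with $f$ continuous and $\dom(g)\not=\emptyset$. Continuity of $f$ yields a continuous realizer $F$, hence an oracle $r_1$ such that $F$ is computable relative to $r_1$. Non-emptiness of $\dom(g)$ gives some $x\in\dom(g)$, and by surjectivity of the representation of $U$ we can pick a name $q\in\IN^\IN$ of $x$; the oracle $q$ itself trivially computes $q$. Combine the two into a single oracle $r:=\langle r_1,q\rangle$ relative to which both $F$ and $q$ are computable. Now the construction from the proof of Lemma \ref{least:least} runs unchanged: set $H:=F\pi_1$ and let $K$ be the constant function with value $q$. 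Both $H$ and $K$ are computable relative to $r$, and $H\langle\id,GK\rangle=F$ is a realizer of $f$ for any realizer $G$ of $g$. Hence $f\leqW g$ with respect to the oracle $r$, which is exactly the meaning of continuous Weihrauch reducibility.

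For the converse direction one needs that any function Weihrauch reducible (with respect to some oracle) to a continuous function is itself continuous. This is the straightforward topological analogue of Proposition \ref{prop:reducibility-computability}(3), again obtained by relativization: if $H,K$ are computable relative to $r$ and $G$ is continuous (hence computable relative to some $s$), then $H\langle\id,GK\rangle$ is computable relative to $\langle r,s\rangle$ and therefore continuous. Combining both directions, the continuous multi-valued functions with non-empty domain form exactly one Weihrauch degree, which is the least one.

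The main potential obstacle is purely bookkeeping, namely tracking how the several oracles (for continuity of $f$, for a name of a point in $\dom(g)$, and for the reduction functions $H$ and $K$) merge into a single oracle under which the argument of Lemma \ref{least:least} applies verbatim. Since $\langle\cdot,\cdot\rangle$ on Baire space is computable and the join of two oracles behaves as expected, no conceptual difficulty arises and the proof is essentially a one-step relativization of Corollary \ref{cor:least}.
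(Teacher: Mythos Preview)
Your proposal is correct and follows exactly the approach the paper intends: the paper offers no explicit proof for this corollary, merely stating that it is obtained ``analogously to the previous corollary'' by relativization, and your argument spells out precisely this relativization of Lemma~\ref{least:least} and Proposition~\ref{prop:reducibility-computability}(3). The bookkeeping with the joined oracle $\langle r_1,q\rangle$ is the right way to handle the single subtlety, and no further ideas are needed.
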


We now want to show that our concept of reducibility is 
invariant under equivalent representations. 
Such invariance properties are of particular importance in computable analysis \cite{Wei00}.
If we have two representations
$\delta$ and $\delta'$ of a set $X$, then $\delta$ is said to be {\em reducible}
to $\delta'$, in symbols $\delta\leq\delta'$,
if there is a computable function $F:\In\IN^\IN\to\IN^\IN$ such that
$\delta(p)=\delta'F(p)$ for all $p\in\dom(\delta)$. 
Moreover, $\delta$ is said to be {\em equivalent} to $\delta'$, in
symbols $\delta\equiv\delta'$, if $\delta\leq\delta'$ and $\delta'\leq\delta$.
We can now formulate the following result.

\begin{lemma}[Invariance under representations]
Let $f$ and $g$ be multi-valued functions.
If $f\leqW g$ holds with respect to certain representations
and each representation is replaced by an equivalent one, then $f\leqW g$ also
holds with respect to these equivalent representations.
An analogous statement holds for $\leqSW$.
\end{lemma}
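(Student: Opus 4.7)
The plan is entirely mechanical: given equivalent representations, use the computable translators provided by the equivalences to conjugate the reduction $(H,K)$ into one that works for the new representations.

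Let $f:\In X\mto Y$ and $g:\In U\mto V$ be multi-valued maps. Suppose $f\leqW g$ holds with respect to representations $\delta_X,\delta_Y,\delta_U,\delta_V$, and let $\delta_X',\delta_Y',\delta_U',\delta_V'$ be equivalent representations of the corresponding spaces. By definition of representation equivalence, I obtain computable translators $A_X,B_X,A_Y,B_Y,A_U,B_U,A_V,B_V:\In\Baire\to\Baire$ satisfying $\delta_X=\delta_X'A_X$, $\delta_X'=\delta_X B_X$, and analogously for $Y,U,V$. The first step is to observe that these translators convert realizers in both directions: if $G'\vdash g$ with respect to $(\delta_U',\delta_V')$, then $G:=B_V G' A_U$ is a realizer of $g$ with respect to $(\delta_U,\delta_V)$, since for any $q\in\dom(g\delta_U)$ one has
\[\delta_V G(q)=\delta_V B_V G' A_U(q)=\delta_V'G'A_U(q)\in g\delta_U'A_U(q)=g\delta_U(q).\]
Dually, if $F\vdash f$ with respect to $(\delta_X,\delta_Y)$, then $F':=A_Y F B_X\vdash f$ with respect to $(\delta_X',\delta_Y')$.

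The second step is to apply the hypothesis to obtain computable $H,K:\In\Baire\to\Baire$ such that $H\langle\id,GK\rangle\vdash f$ with respect to $(\delta_X,\delta_Y)$ for every realizer $G$ of $g$ with respect to $(\delta_U,\delta_V)$. Define $K':=A_U K B_X$ and let $H'$ be the computable function given by $H'\langle p,q\rangle:=A_Y H\langle B_X(p),B_V(q)\rangle$. For any realizer $G'$ of $g$ with respect to $(\delta_U',\delta_V')$, set $G:=B_V G' A_U$ as above. A direct computation shows
\[H'\langle\id,G'K'\rangle(p)=A_Y H\langle B_X(p),B_V G'A_U K B_X(p)\rangle=A_Y\bigl(H\langle\id,GK\rangle\bigr)B_X(p),\]
so that $H'\langle\id,G'K'\rangle=A_Y F B_X=F'$ is a realizer of $f$ with respect to $(\delta_X',\delta_Y')$ by the first step. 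This establishes $f\leqW g$ with respect to the new representations.

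For the strong version $\leqSW$, the same translators work even more cleanly: if $F=HGK$ then $F'=A_Y F B_X=(A_Y H B_V)G'(A_U K B_X)=H'G'K'$ with $H':=A_Y H B_V$ and $K':=A_U K B_X$, both computable. There is no genuine obstacle here; the only risk is bookkeeping errors in the order of the translators, so the plan is to carry out the computation carefully and check that the pairing is threaded through $H'$ in the way specified above.
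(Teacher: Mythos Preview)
Your proof is correct and follows essentially the same approach as the paper: conjugate the given reduction by the computable translators supplied by the representation equivalences. In fact, your $K'=A_UKB_X$ and $H'\langle p,q\rangle=A_YH\langle B_X(p),B_V(q)\rangle$ coincide with the paper's $K':=SKQ$ and $H':=RH(Q\otimes T)$ under the identifications $B_X\leftrightarrow Q$, $A_Y\leftrightarrow R$, $A_U\leftrightarrow S$, $B_V\leftrightarrow T$; the only cosmetic difference is that you name all eight translators while only four are actually used.
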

\begin{proof}
We consider functions $f:\In X\mto Y$ and $g:\In U\mto V$ and
let $\delta_X,\delta_X',\delta_Y,\delta_Y',\delta_U,\delta_U'$ and $\delta_V,\delta_V'$
be representations of $X,Y,U$ and $V$, respectively. Moreover, let $Q,R,S,T:\In\IN^\IN\to\IN^\IN$
be computable functions such that $\delta_X'=\delta_XQ$, $\delta_Y=\delta_Y'R$,
$\delta_U=\delta_U'S$ and $\delta_V'=\delta_VT$. 
Now we assume that $f\leqW g$ holds with respect to $\delta_X,\delta_Y,\delta_U,\delta_V$.
That is, there are computable functions $H,K:\In\IN^\IN\to\IN^\IN$ such that
\[\delta_YH\langle\id,GK\rangle\in f\delta_X(p)\]
for all $p\in\dom(f\delta_X)$ and for all functions $G:\In\IN^\IN\to\IN^\IN$ 
that are realizers of $g$ with respect to $(\delta_U,\delta_V)$, i.e.\ for which
$\delta_VG(p)\in g\delta_U(p)$ for all $p\in\dom(\delta_VG)$.
Then also the functions $H',K':\In\IN^\IN\to\IN^\IN$ with 
$H'=RH(Q\otimes T)$ and $K':=SKQ$ are computable.
Let $G':\In\IN^\IN\to\IN^\IN$ be a realizer of $g$ with respect to $(\delta_U',\delta_V')$,
i.e.\ $g$ satisfies
$\delta_V'G'(p)\in g\delta_U'(p)$ for all $p\in\dom(g\delta_U')$. 
Then $\delta_VTG'S(p)\in g\delta_U(p)$ for all $p\in\dom(g\delta_U)$,
i.e.\ $G:=TG'S$ is a realizer of $g$ with respect to $(\delta_V,\delta_U)$
and hence we obtain
\begin{eqnarray*}
\delta_Y'H'\langle p,G'K'(p)\rangle
&=& \delta_Y'RH(Q\otimes T)\langle p,G'SKQ(p)\rangle\\
&=& \delta_YH\langle Q(p),TG'SKQ(p)\rangle\\
&=& \delta_YH\langle\id,GK\rangle(Q(p))\\
&\in& f\delta_XQ(p)\\
&=& f\delta_X'(p)
\end{eqnarray*}
for all $p\in\dom(f\delta_X')$.
Thus $f\leqW g$ with respect to $\delta_X',\delta_Y',\delta_U',\delta_V'$.
The statement for $\leqSW$ can be proved analogously.
\end{proof}

\section{Product and sum of Weihrauch degrees}
\label{sec:product-sum}

Now we want to study the product and disjoint union operation of multi-valued
operations. They are related to the supremum and infimum for
Weihrauch reducibility. We start with the product operation.

\begin{definition}[Product]\rm
Let $f:\In X\mto Y$ and $g:\In U\mto V$ be multi-valued functions on represented spaces. 
Then the {\em product} of these maps $f\times g:\In X\times U\mto Y\times V$ is defined by 
\[(f\times g)(x,u):=f(x)\times g(u)\]
for all $(x,u)\in\dom(f\times g)=\dom(f)\times\dom(g)$.
\end{definition}

We assume that whenever $(X,\delta_X)$ and $(U,\delta_U)$ are represented spaces,
then the product $X\times U$ is represented by the canonical product
representation $[\delta_X,\delta_U]$, defined by $[\delta_X,\delta_U]\langle p,q\rangle:=(\delta_X(p),\delta_U(q))$.
We prove that the product is a monotone operation with
respect to Weihrauch reducibility.

\begin{proposition}[Monotonicity of products]
\label{prop:monotone}
Let $f,f',g$ and $g'$ be mul\-ti-valued functions on represented spaces. Then
\[f\leqW g\mbox{ and }f'\leqW g'\TO f\times f'\leqW g\times g'.\]
An analogous statement holds for strong Weihrauch reducibility $\leqSW$.
\end{proposition}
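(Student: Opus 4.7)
The plan is to apply the two given reductions in parallel on the product representation. Assume $f\leqW g$ via computable $H,K$ and $f'\leqW g'$ via computable $H',K'$, so that $H\langle\id,GK\rangle\vdash f$ for every $G\vdash g$ and $H'\langle\id,G'K'\rangle\vdash f'$ for every $G'\vdash g'$. Recalling that the product space $X\times U$ carries the representation $[\delta_X,\delta_U]$ with $[\delta_X,\delta_U]\langle p,p'\rangle=(\delta_X(p),\delta_U(p'))$, the natural guess for the new reduction functions is
\[K''\langle q,q'\rangle:=\langle K(q),K'(q')\rangle,\qquad H''\langle\langle q,q'\rangle,\langle r,r'\rangle\rangle:=\langle H\langle q,r\rangle,H'\langle q',r'\rangle\rangle,\]
both of which are computable as compositions of computable building blocks ($K,K',H,H'$, projections and tupling).

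Next I would verify that this works on any realizer $\widetilde G\vdash g\times g'$. Write $\widetilde G\langle p,p'\rangle=\langle\widetilde G_1\langle p,p'\rangle,\widetilde G_2\langle p,p'\rangle\rangle$. For $\langle q,q'\rangle\in\dom((f\times f')[\delta_X,\delta_{X'}])$ we have $q\in\dom(f\delta_X)$ and $q'\in\dom(f'\delta_{X'})$, so $K(q)\in\dom(g\delta_U)$ and $K'(q')\in\dom(g'\delta_{U'})$, and the realizer property of $\widetilde G$ gives $\delta_V\widetilde G_1\langle Kq,K'q'\rangle\in g\delta_U(Kq)$ and $\delta_{V'}\widetilde G_2\langle Kq,K'q'\rangle\in g'\delta_{U'}(K'q')$.

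The key point, and the only potential obstacle, is that $\widetilde G$ need not decouple into an independent pair $(G,G')$ with $G\vdash g$ and $G'\vdash g'$; a priori both output components depend on both inputs. This is resolved by the universal quantification in the definition of $\leqW$: since for any single pair $(Kq,r)$ with $\delta_V r\in g\delta_U(Kq)$ there exists a realizer $G\vdash g$ with $G(Kq)=r$, the statement $H\langle\id,GK\rangle\vdash f$ forces $\delta_Y H\langle q,r\rangle\in f\delta_X(q)$ for that particular $r$. Applying this observation with $r:=\widetilde G_1\langle Kq,K'q'\rangle$ and symmetrically for $f'$ yields
\[[\delta_Y,\delta_{Y'}]\,H''\langle\langle q,q'\rangle,\widetilde G K''\langle q,q'\rangle\rangle\in f\delta_X(q)\times f'\delta_{X'}(q')=(f\times f')[\delta_X,\delta_{X'}]\langle q,q'\rangle,\]
so that $H''\langle\id,\widetilde GK''\rangle\vdash f\times f'$ and hence $f\times f'\leqW g\times g'$.

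For the strong version $\leqSW$, the same construction works with $H''$ replaced by the version that drops the input passed through by the identity, i.e.\ $H''\langle r,r'\rangle:=\langle H(r),H'(r')\rangle$ with $K''$ unchanged; the verification is the same but simpler since no universal-realizer trick is needed (the outputs of $\widetilde G$ are composed directly by $H\otimes H'$). I would present the $\leqW$ case in detail and remark that the $\leqSW$ case is analogous.
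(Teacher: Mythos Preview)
Your argument is correct and follows essentially the same route as the paper: the reduction functions $K''=K\otimes K'$ and $H''=(H\otimes H')\circ P$ (your $H''$ written pointwise is exactly the paper's $(H\otimes H')P$), together with the pointwise existence of realizers $G\vdash g$, $G'\vdash g'$ matching the components of $\widetilde G$ at the given input, which you in fact spell out more explicitly than the paper does. One small inaccuracy: in the $\leqSW$ case the same realizer-extension trick is still needed to pass from the components of $\widetilde G$ to genuine realizers $G,G'$, so ``no universal-realizer trick is needed'' is not quite right---but since you defer to analogy this does not affect the validity of your proof.
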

\begin{proof}
We consider maps $f:\In X\mto Y$, $f':\In Z\mto W$, $g:\In U\mto V$ and $g':\In S\mto T$
on represented spaces $(X,\delta_X)$, $(Y,\delta_Y)$, $(Z,\delta_Z)$, $(W,\delta_W)$, $(U,\delta_U)$, $(V,\delta_V)$,
$(S,\delta_S)$ and $(T,\delta_T)$.
%By Lemma~\ref{lem:nowhere} we can assume without loss of generality
%that $\dom(g)$ and $\dom(g')$ are non-empty.
%If $\dom(g)=\emptyset$ or $\dom(g')=\emptyset$, then $\dom(g\times g')=\emptyset$ 
%and the nowhere defined function $G:\In\IN^\IN\to\IN^\IN$ is a realizer of $g\times g'$.
%Hence 
Let $H,H',K,K':\In\IN^\IN\to\IN^\IN$ be computable functions such that
$H\langle\id,GK\rangle$ is a realizer of $f$ for any realizer $G$ of $g$ 
and such that $H'\langle\id,G'K'\rangle$ is a realizer of $f'$ for any realizer $G'$ of $g'$.
We use the projections $\pi_i$ to define
$P:=\langle\langle\pi_1\pi_1,\pi_1\pi_2\rangle,\langle\pi_2\pi_1,\pi_2\pi_2\rangle\rangle$
and we define computable functions $H'':=(H\otimes H')P$ and $K'':=(K\otimes K')$.
%Let $u,s\in\IN^\IN$ with $\delta_U(u)\in\dom(g)$ and $\delta_S(s)\in\dom(g')$.
Now let $G''$ be a realizer of $g\times g'$ with respect to the product representation, i.e.\
\[[\delta_V,\delta_T]G''\langle u,s\rangle\in (g\times g')[\delta_U,\delta_S]\langle u,s\rangle\]
for all $\langle u,s\rangle\in\dom((g\times g')[\delta_U,\delta_S])$.
We fix a pair $\langle p,q\rangle\in\dom((f\times f')[\delta_X,\delta_Z])$.
Then there are realizers $G$ of $g$ and $G'$ of $g'$ such that $G''\langle K(p),K'(q)\rangle=\langle GK(p),G'K'(q)\rangle$.
%Then $G$ with $G(p):=\pi_1G''\langle p,s\rangle$ is a realizer of $g$
%and $G'$ with $G'(q):=\pi_2G''\langle u,q\rangle$ is a realizer of $g'$ and 
%$G\otimes G'=G''$.
We obtain
\begin{eqnarray*}
H''\langle\id,G''K''\rangle\langle p,q\rangle
&=& (H\otimes H')P\langle\id,G''(K\otimes K')\rangle\langle p,q\rangle\\
&=& (H\otimes H')P\langle\langle p,q\rangle,\langle GK(p),G'K'(q)\rangle\rangle\\
&=& (H\otimes H')\langle\langle p,GK(p)\rangle,\langle q,G'K'(q)\rangle\rangle\\
&=& \langle H\langle p,GK(p)\rangle,H'\langle q,G'K'(q)\rangle\rangle,
\end{eqnarray*}
and hence $H''\langle\id,G''K''\rangle$ is a realizer of $f\times f'$. 
This shows $f\times f'\leqW g\times g'$.
The result for strong Weihrauch reducibility can be proved analogously.
\end{proof}

This monotonicity result guarantees that we can safely extend the product
operation to Weihrauch degrees.
Since $f\leqW f\times g$ and $g\leqW f\times g$ (given that $f$ and $g$ have at least one computable
point in the domain), it follows that $f\times g$ is a common upper bound of $f$ and $g$.
Often it will also be the least upper bound. However, this is not always the case
since there are functions $f$, even single-valued ones, which are not idempotent.

\begin{lemma}
There are functions $f:\In X\to Y$ such that $f\nequivW f\times f$. 
\end{lemma}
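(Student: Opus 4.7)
I plan to take $f = \LPO: \IN^\IN \to \IN$. The reduction $\LPO \leqW \LPO \times \LPO$ holds by Lemma \ref{least:least}, padding the second factor with any computable point of $\IN^\IN$. The core of the argument is to show $\LPO \times \LPO \nleqW \LPO$; since a topological non-reduction implies a computable one, I would work throughout with continuous reductions.

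Write $z \in \IN^\IN$ for the constant-$0$ sequence and $e_k \in \IN^\IN$ for the characteristic sequence of $\{k\}$, so $\LPO(z) = 0$ and $\LPO(e_k) = 1$. Suppose for contradiction that continuous $H, K$ witness $\LPO \times \LPO \leqW \LPO$ against a fixed realizer $G$ of $\LPO$ that sends $z$ to $z$ and every sequence with a nonzero entry to a common fixed name $\nu$ of $1 \in \IN$. Set $r_0 := K\langle z, z\rangle$ and $b_0 := \LPO(r_0)$; I would split into cases on $b_0$.

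In the case $b_0 = 1$, continuity of $K$ gives $\LPO(K\langle e_k, z\rangle) = 1$ for all sufficiently large $k$, so $GK\langle e_k, z\rangle = \nu = G(r_0)$ for such $k$. Then $H\langle \langle e_k, z\rangle, \nu\rangle$ must name $(1, 0)$ while $H\langle \langle z, z\rangle, \nu\rangle$ names $(0, 0)$; since $\langle e_k, z\rangle \to \langle z, z\rangle$ in Baire space, this contradicts continuity of $H$. In the case $b_0 = 0$, i.e.\ $r_0 = z$, a parallel analysis on the three families $\langle e_k, z\rangle$, $\langle z, e_l\rangle$, and $\langle e_k, e_l\rangle$ shows that $\LPO \circ K$ must eventually take value $1$ on each family (otherwise the same limit argument yields an immediate contradiction via the oracle output $z$). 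Once this is established, comparing $H\langle \langle e_k, e_l\rangle, \nu\rangle = (1, 1)$ with $H\langle \langle z, e_l\rangle, \nu\rangle = (0, 1)$ and letting $k \to \infty$ with $l$ fixed large contradicts continuity of $H$.

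The main obstacle lies in the second case, where I must run the diagonal argument across three input families and track both the oracle answer and the values of $H$ simultaneously. Conceptually, $\LPO \times \LPO$ has four topologically indistinguishable outcome branches near $\langle z, z\rangle$, while a single $\LPO$ query paired with continuous preprocessing of the input can separate at most two; making this information-theoretic obstruction precise is exactly what the case analysis accomplishes.
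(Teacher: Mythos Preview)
Your strategy is sound, but there is a definitional slip that breaks the argument as written. Under the paper's definition, $\LPO(p)=0$ iff $p$ contains a zero; with $z$ the constant-$0$ sequence and $e_k$ the characteristic sequence of $\{k\}$ you therefore have $\LPO(z)=\LPO(e_k)=0$, so neither the oracle answer nor the target value changes along $e_k\to z$ and there is nothing for the continuity argument to bite on. You are implicitly working with the variant $\LPO'(p)=1$ iff $p$ has a nonzero entry. The fix is mechanical: take $z$ to be the constant-$1$ sequence (so $\LPO(z)=1$) and let $e_k$ be $0$ at position $k$ and $1$ elsewhere (so $\LPO(e_k)=0$ and $e_k\to z$); the realizer $G$ should send every $p$ containing a zero to a fixed name of $0$ and every zero-free $p$ to a fixed name of $1$. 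With these choices your two-case analysis goes through exactly as you outlined. One further minor point: the easy direction $\LPO\leqW\LPO\times\LPO$ is not Lemma~\ref{least:least} (which concerns computable $f$); it is the observation, made just after Proposition~\ref{prop:monotone}, that $f\leqW f\times g$ whenever $\dom(g)$ contains a computable point.

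The paper takes a genuinely different route. It defers the example to Corollary~\ref{cor:LPO-LLPO-idempotency}, which rests on Proposition~\ref{prop:LPO-LLPO-mind-change}: $\LPO^{(k+1)}$ can be computed by a limit machine with $k+1$ mind changes but not with $k$, and since the mind-change count is an invariant of Weihrauch degree one gets $\LPO^{(k)}\lW\LPO^{(k+1)}$ for all $k$. Your direct continuity argument is more elementary and entirely self-contained (no external invariant is needed), but it is tailored to the single step $k=1$. The mind-change method is less hands-on yet more systematic: it immediately yields the whole strict hierarchy and the analogous statement for $\LLPO$.
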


We will provide a concrete example in Corollary~\ref{cor:LPO-LLPO-idempotency}.
Such a function $f$ necessarily has to be discontinuous, since all
computable functions (with at least one computable point in the domain)
are equivalent and, in particular, idempotent. In general, we call
a Weihrauch degree {\em idempotent}, if $f\equivW f\times f$ holds
for some $f$ in that degree (and hence for all $f$ in that degree by Proposition~\ref{prop:monotone}).

Using products we can characterize the relation between strong
and ordinary Weihrauch reducibility. In fact, it can be expressed
in similar terms as the relation between many-one reducibility and one-one
reducibility using cylindrifications.

\begin{definition}[Cylindrification]\rm
Let $f:\In X\mto Y$ be a function on represented spaces.
We call $\id\times f$ with $\id:\IN^\IN\to\IN^\IN$ the
{\em cylindrification} of $f$ and we call $f$ a {\em cylinder},
if $f\equivSW\id\times f$.
\end{definition}

If not mentioned otherwise, we assume that the identity is defined
on Baire space $\IN^\IN$. We also assume that $\IN^\IN$ is represented
by the identity,
which is equivalent to the Cauchy representation of $\IN^\IN$. In particular, any single-valued function
on Baire space can be considered as its own realizer.
Now we can prove the following result on the relation of ordinary
Weihrauch reducibility and strong reducibility. Roughly speaking this result shows
that reduction between two functions is equivalent to strong reduction between their
cylindrifications.

\begin{proposition}[Cylindrification]
\label{prop:cylinder}
For all multi-valued functions $f$ and $g$ on represented spaces we obtain
\[f\leqW g\iff\id\times f\leqSW\id\times g.\]
\end{proposition}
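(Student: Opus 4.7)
The plan is to exploit the fact that strong Weihrauch reducibility forbids direct access to the input, so the only way to smuggle the input past $G^*$ is to place it in the identity slot of the cylindrification, where it will be preserved. For both directions I fix a realizer $G^*$ of $\id\times g$ and unpack it as $G^*\langle p,q\rangle=\langle p',q'\rangle$ with $p'=p$ and $q'$ a name of a point of $g\delta_U(q)$; in particular, for any fixed first-coordinate $p$, the map $q\mapsto \pi_2G^*\langle p,q\rangle$ is a realizer of $g$.

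For the forward implication, assume $f\leqW g$ via computable $H,K$, so that $H\langle\id,GK\rangle$ realizes $f$ whenever $G$ realizes $g$. I would define
\[K^*\langle p,r\rangle:=\langle\langle p,r\rangle,K(r)\rangle \quad\text{and}\quad H^*\langle\langle p,r\rangle,t\rangle:=\langle p,H\langle r,t\rangle\rangle,\]
using the fact that $\langle p,r\rangle\in\IN^\IN$ is itself a legitimate first-coordinate input since the identity-slot of the cylinder is all of $\IN^\IN$. Then for any realizer $G^*$ of $\id\times g$,
\[H^*G^*K^*\langle p,r\rangle=H^*\langle\langle p,r\rangle,G_{\langle p,r\rangle}(Kr)\rangle=\langle p,H\langle r,G_{\langle p,r\rangle}(Kr)\rangle\rangle,\]
where $G_{\langle p,r\rangle}:=\pi_2G^*\langle\langle p,r\rangle,\cdot\rangle$ is a realizer of $g$, so the second coordinate names an element of $f\delta_X(r)$. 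This shows $\id\times f\leqSW\id\times g$.

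For the backward implication, assume $\id\times f\leqSW\id\times g$ via computable $H^*,K^*$. Given any realizer $G$ of $g$, define $G^*\langle p,q\rangle:=\langle p,G(q)\rangle$; this is a realizer of $\id\times g$ in product form. Fix a computable point $\hat p\in\IN^\IN$ and set
\[K(r):=\pi_2K^*\langle\hat p,r\rangle \quad\text{and}\quad H\langle r,s\rangle:=\pi_2H^*\langle\pi_1K^*\langle\hat p,r\rangle,s\rangle.\]
Both are computable. A short calculation yields $H\langle r,GK(r)\rangle=\pi_2H^*G^*K^*\langle\hat p,r\rangle$, and since $H^*G^*K^*$ realizes $\id\times f$, this value names an element of $f\delta_X(r)$. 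Hence $H\langle\id,GK\rangle$ realizes $f$, giving $f\leqW g$.

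The only delicate point is the backward direction: one must ensure that the strong reduction, which was only assumed to work for \emph{all} realizers $G^*$ of $\id\times g$, can in particular be instantiated on a realizer of product form. Once that observation is made, the choice of a fixed computable $\hat p$ in the first slot of $K^*$ supplies the constant that substitutes for the lost direct access to the input, and everything falls out of the definitions.
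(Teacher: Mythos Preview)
Your proof is correct and follows essentially the same approach as the paper's own proof. In the forward direction your $K^*,H^*$ coincide with the paper's $K',H'$, and your observation that $G_{\langle p,r\rangle}:=\pi_2G^*\langle\langle p,r\rangle,\cdot\rangle$ is a realizer of $g$ is exactly what the paper uses. In the backward direction the only difference is that you feed a fixed computable constant $\hat p$ into the first slot of $K^*$, whereas the paper uses the diagonal $D(p)=\langle p,p\rangle$, i.e.\ feeds the input $p$ itself; both choices work for the same reason, since the first coordinate is irrelevant to the $f$-output and is only needed to make $K^*$ total and computable on the relevant domain.
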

\begin{proof}
Let us assume that $f\leqW g$ holds, i.e.\ there are computable functions
$H$ and $K$ such that $H\langle\id,GK\rangle$ is a realizer of $f$ for any realizer
$G$ of $g$. Then $H',K'$, defined by 
\[H'\langle\langle p,q\rangle,r\rangle=\langle p,H\langle q,r\rangle\rangle
 \mbox{ and }
 K'\langle p,q\rangle=\langle\langle p,q\rangle, K(q)\rangle\]
are computable.
Let $G'$ be a realizer of $\id\times g$. 
We fix a pair $\langle p,q\rangle$ such that $q$ is a name of a point in $\dom(f)$.
Then there is a realizer $G$ of $g$ such that 
$G'\langle\langle p,q\rangle,K(q)\rangle=\langle\langle p,q\rangle,GK(q)\rangle$
and we obtain
\begin{eqnarray*}
H'G'K'\langle p,q\rangle
&=& H'G'\langle\langle p,q\rangle,K(q)\rangle\\
&=& H'\langle\langle p,q\rangle,GK(q)\rangle\\
&=& \langle p,H\langle q,GK(q)\rangle\rangle
\end{eqnarray*}
and $H'G'K'$ is a realizer of $\id\times f$, 
which proves $(\id\times f)\leqSW(\id\times g)$.

Now let $(\id\times f)\leqSW(\id\times g)$. Then there are computable
functions $H,K:\In\IN^\IN\to\IN^\IN$ such that $HIK$ is a realizer of $\id\times f$
for any realizer $I$ of $\id\times g$. In this situation $\pi_2HIKD$ is a realizer of $f$,
where $D(p)=\langle p,p\rangle$.
We define computable functions $H':=\pi_2H(\pi_1KD\otimes\id)$ and $K':=\pi_2KD$.
Now let $G$ be a realizer of $g$.
Then $I:=\id\otimes G$ is a realizer of $\id\times g$ and we obtain:
\[
H'\langle\id,GK'\rangle(p)
= \pi_2H(\pi_1KD\otimes\id)\langle\id,G\pi_2KD\rangle(p)
= \pi_2HIKD(p),
\]
i.e.\ $H'\langle\id,GK'\rangle$ is a realizer of $f$ whenever $G$ is a realizer of $g$.
This shows, $f\leqW g$. 
\end{proof}

It is also easy to see that cylindrification is a closure operator
on strong Weihrauch degrees and the cylindrification of strong 
Weihrauch degrees just yields the ordinary Weihrauch degrees.
We do not discuss this any further here. We just formulate
a corollary that shows that Weihrauch reducibility and strong
reducibility to cylinders are identical.

\begin{corollary}[Reductions to cylinders]
\label{cor:cylinder-reduction}
Let $f$ and $g$ be multi-valued functions on represented space
and let $g$ be a cylinder.
Then we obtain 
\[f\leqW g\iff f\leqSW g.\]
\end{corollary}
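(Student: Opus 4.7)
The backward direction $f \leqSW g \Rightarrow f \leqW g$ is immediate from Proposition~\ref{prop:reducibility-computability}(1), so the content is in the forward direction. My plan is to reduce it to Proposition~\ref{prop:cylinder} together with the hypothesis that $g$ is a cylinder, via two small auxiliary reductions.

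The first auxiliary observation I would establish is that $f \leqSW \id \times f$ always holds. Fix any computable $q_0\in\IN^\IN$ and set $K(p):=\langle q_0,p\rangle$ and $H:=\pi_2$. If $G'$ is a realizer of $\id\times f$, then on any input $p\in\dom(f\delta_X)$ we have $G'\langle q_0,p\rangle=\langle q_0,F(p)\rangle$ for some realizer-like behavior on the second coordinate that picks out an output of $f\delta_X(p)$, so $HG'K(p)$ is an $f$-realizer value. Hence $f\leqSW \id\times f$.

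The second ingredient is the cylinder hypothesis $g\equivSW \id\times g$, which is symmetric in the sense that it provides reductions in both directions. Given $f\leqW g$, Proposition~\ref{prop:cylinder} yields $\id\times f\leqSW \id\times g$. Chaining with $\id\times g\leqSW g$ (from the cylinder hypothesis) and with $f\leqSW \id\times f$, the transitivity of $\leqSW$ (Lemma~\ref{lem:preorder}) gives
\[ f \leqSW \id\times f \leqSW \id\times g \leqSW g, \]
which is the desired $f\leqSW g$.

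The only point that requires any care is the first auxiliary reduction $f\leqSW \id\times f$: one must verify that plugging a fixed computable $q_0$ into the first coordinate of a realizer of $\id\times f$ genuinely produces, on the second coordinate, a realizer of $f$ in the sense of the paper's definition. This follows directly from the definition of the product representation $[\delta_{\IN^\IN},\delta_Y]$ and from the fact that $\id\times f$ is defined coordinatewise, so no real obstacle arises; the whole argument is essentially a bookkeeping exercise once Proposition~\ref{prop:cylinder} is in hand.
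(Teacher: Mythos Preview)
Your proof is correct and follows exactly the approach the paper intends: the paper's entire proof is the one-line remark that the corollary ``is a consequence of the fact that $f\leqSW\id\times f$ always holds,'' and you have spelled out precisely that chain $f\leqSW\id\times f\leqSW\id\times g\leqSW g$ using Proposition~\ref{prop:cylinder} and the cylinder hypothesis. A minor simplification: for the auxiliary reduction $f\leqSW\id\times f$ you can take $K(p)=\langle p,p\rangle$ instead of introducing a fixed computable $q_0$, but your version is equally valid.
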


This is a consequence of the fact that $f\leqSW\id\times f$ always holds.
In the next proposition we collect
a number of algebraic properties of the product operation.
In particular, it turns out that Weihrauch degrees form a commutative
monoid with respect to the products.

\begin{proposition}[Product]
\label{prop:product}
Let $f,g$ and $h$ be multi-valued functions on represented spaces. Then
\begin{enumerate}
\item $(f\times g)\times h\equivSW f\times(g\times h)$ \hfill (associative)
\item $f\times g\equivSW g\times f$ \hfill (commutative)
\item $f\times\id\equivW\id\times f\equivW f$ \hfill (identity)
\end{enumerate}
\end{proposition}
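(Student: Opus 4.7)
The plan is to treat (1) and (2) together via strong Weihrauch reducibility and then handle (3) separately, since the identity law is exactly where the weak version of the reducibility becomes essential.

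For associativity (1), the key point is that the product representation is built from the pairing function $\langle\cdot,\cdot\rangle$ on Baire space, and this pairing is associative up to a computable bijection. Concretely, I would introduce computable maps $A,A^{-1}:\IN^\IN\to\IN^\IN$ with $A\langle\langle p,q\rangle,r\rangle=\langle p,\langle q,r\rangle\rangle$. Given any realizer $G$ of $f\times(g\times h)$ (with respect to the canonical product representations obtained by iterating $[\cdot,\cdot]$), the composition $A^{-1}GA$ is a realizer of $(f\times g)\times h$, so taking $H:=A^{-1}$ and $K:=A$ yields $(f\times g)\times h\leqSW f\times(g\times h)$, and the converse reduction is symmetric. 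Commutativity (2) follows the same pattern using the swap $S\langle p,q\rangle:=\langle q,p\rangle$ in place of $A$.

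The identity law (3) genuinely needs the weak reduction. First note why strong reducibility is not enough for $f\times\id\leqSW f$: from a realizer of $f$ one would have to produce an output of the form $\langle r,q\rangle$ in which the second component $q$ is part of the \emph{input}, and strong reducibility provides no mechanism to route the input around $G$. This is precisely what the extra identity component in $H\langle\id,GK\rangle$ supplies. For $f\leqW f\times\id$, fix any computable $0\in\IN^\IN$ and put $K(p):=\langle p,0\rangle$ and $H:=\pi_1\pi_2$; then for any realizer $G$ of $f\times\id$, the value $GK(p)=\langle r,s\rangle$ has $\delta_Y(r)\in f\delta_X(p)$, and $H\langle\id,GK\rangle(p)=r$ realizes $f$. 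For $f\times\id\leqW f$, set $K\langle p,q\rangle:=p$ and $H\langle\langle p,q\rangle,r\rangle:=\langle r,q\rangle$; for any realizer $G$ of $f$, one gets $H\langle\id,GK\rangle\langle p,q\rangle=\langle G(p),q\rangle$, which realizes $f\times\id$. The equivalence $\id\times f\equivW f$ follows by the same argument (or formally from (2)).

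The main conceptual point, rather than any technical obstacle, is this asymmetry between (1)(2) and (3): the associativity and commutativity of $\times$ are witnessed by computable rearrangements of tuples and therefore hold in the strong sense, whereas the identity law is what makes the ordinary (weak) Weihrauch reducibility the right setting, since strong reducibility would fail to identify $f$ with $f\times\id$. No non-routine calculation is involved once the rearrangement maps and dummy inputs above are written down; the remaining verification is simply to check that every named function is computable and that the realizer conditions are preserved.
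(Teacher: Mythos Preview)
Your proposal is correct and matches exactly what the paper indicates: the paper omits the proof entirely, saying only that ``one just has to use tupling functions and projections appropriately,'' and your explicit associator $A$, swap $S$, and the $H,K$ pairs for (3) are precisely such tupling/projection manipulations. Your observation that (3) genuinely requires the weak reduction (because $f\times\id\leqSW f$ would have to route the $q$--component of the input around $G$) is a useful addition that the paper alludes to elsewhere, in its remark that the identity cannot be strongly reduced to a constant function and in its treatment of cylindrification.
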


We leave the straightforward proofs to the reader, one just has to use
tupling functions and projections appropriately.
We can say that strong Weihrauch degrees form a semi-group with
respect to the product $\times$, whereas ordinary Weihrauch degrees
are even a monoid with the degree of the identity $\id:\IN^\IN\to\IN^\IN$ (i.e.\
the degree of computable functions) as neutral element.
In this sense the usage of ordinary Weihrauch reducibility 
opposed to strong Weihrauch reducibility can also be motivated algebraically.
As a next operation we want to discuss the direct sum of multi-valued maps.
For any two sets $Y,Z$ we define the {\it direct sum} or {\it disjoint union} by
$Y\oplus Z:=(\{0\}\times Y)\cup(\{1\}\times Z)$.

\begin{definition}[Direct sum]\rm
Let $f:\In X\mto Y$ and $g:\In U\mto V$ be multi-valued maps on represented spaces.
Then the {\em direct sum} of these maps $f\oplus g:\In X\times U\mto Y\oplus V$ is defined by
\[(f\oplus g)(x,u):=(\{0\}\times f(x))\cup(\{1\}\times g(u))\]
for all $(x,u)\in\dom(f\oplus g):=\dom(f)\times\dom(g)$.
\end{definition}

If $(Y,\delta_Y)$ and $(V,\delta_V)$ are represented spaces, then the
direct sum $Y\oplus V$ is represented by $\delta_Y\sqcup\delta_V$, defined 
by 
\[(\delta_Y\sqcup\delta_V)(np):=\left\{\begin{array}{ll}
  \{0\}\times\delta_Y(p) & \mbox{if $n=0$}\\
  \{1\}\times\delta_V(p) & \mbox{otherwise}
\end{array}\right.\]
for all $n\in\IN$, $p\in\IN^\IN$.

One should note that in contrast to the product operation the direct sum
operation does not preserve single-valuedness. 
Thus, the direct sum operation requires multi-valuedness in order to be meaningful.
A nice property of the direct sum operation is that it gives us the
greatest lower bound with respect to Weihrauch reducibility.
We first prove that the direct sum operation is strongly idempotent.
Here and in the following we will occasionally use the computable {\em left shift operation}
$L:\IN^\IN\to\IN^\IN$, defined by $L(p)(n):=p(n+1)$ for all $p\in\IN^\IN$ and $n\in\IN$.

\begin{proposition}[Idempotency]
\label{prop:idempotent-sum}
Let $f$ be a multi-valued map on represented spaces. 
Then we obtain $f\equivSW f\oplus f$.
\end{proposition}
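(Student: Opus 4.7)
The plan is to prove $f \equivSW f\oplus f$ by exhibiting explicit strong reductions in both directions, each requiring only a tag-prepend/strip and a pairing/projection. Fix a multi-valued map $f:\In X\mto Y$ on represented spaces; recall $(f\oplus f)(x,u)=(\{0\}\times f(x))\cup(\{1\}\times f(u))$ and that $Y\oplus Y$ carries the representation $\delta_Y\sqcup\delta_Y$ that prepends a single tag digit.

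For $f\oplus f\leqSW f$, I would take $K:=\pi_1$ and $H(r):=0r$ (the computable function that prepends the digit $0$). Given any realizer $G$ of $f$ and an input $\langle p,q\rangle$ with $p$ a name of $x\in\dom(f)$ and $q$ a name of $u\in\dom(f)$, the chain $HGK\langle p,q\rangle=0\cdot G(p)$ is a $(\delta_Y\sqcup\delta_Y)$-name of $(0,y)$ for some $y\in f(x)$; by definition of the direct sum, $(0,y)\in(f\oplus f)(x,u)$. Both $H$ and $K$ are computable, so this is a strong reduction.

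For $f\leqSW f\oplus f$, I would take $K(p):=\langle p,p\rangle$ (the diagonal) and $H:=L$ (the left shift). Given a realizer $G$ of $f\oplus f$ and an input $p$ naming $x\in\dom(f)$, the duplication ensures that $GK(p)=G\langle p,p\rangle$ is some name $nr$ of an element of $(f\oplus f)(x,x)=(\{0\}\times f(x))\cup(\{1\}\times f(x))$. By the definition of $\delta_Y\sqcup\delta_Y$, in both cases $n=0$ and $n\neq 0$ the tail $r$ is a $\delta_Y$-name of some $y\in f(x)$, so $HGK(p)=L(nr)=r$ is a valid realizer output for $f$ on input $p$.

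The only conceptual point, and what makes the lemma worth stating, is that a reduction to $f\oplus f$ cannot choose which branch the realizer will return; but by evaluating on the diagonal input $(x,x)$ both branches of the disjoint union project to the same fiber $f(x)$, so a single uniform left shift strips the tag correctly without any case distinction on the output. This is also what keeps the reduction strong rather than merely Weihrauch: the input is fed straight through $K$ into $G$ and the output is post-processed purely by $H$, with no piggy-backed copy of the input needed.
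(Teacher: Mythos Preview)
Your proof is correct and follows exactly the paper's approach: the paper also uses $F(p):=LG\langle p,p\rangle$ for $f\leqSW f\oplus f$ and $G\langle p,q\rangle:=0F(p)$ for $f\oplus f\leqSW f$, which are precisely your choices $H=L$, $K(p)=\langle p,p\rangle$ and $H(r)=0r$, $K=\pi_1$. Your additional explanation of why the diagonal input makes the tag-strip work uniformly is a nice clarification that the paper leaves implicit.
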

\begin{proof}
Let $G$ be a realizer of $f\oplus f$. 
Then by 
\[F(p):=LG\langle p,p\rangle\]
we get a realizer of $f$. It is clear that this shows $f\leqSW f\oplus f$.
Now let $F$ be an arbitrary realizer of $f$. Then by
\[G\langle p,q\rangle:=0F(p)\]
we obtain a realizer of $f\oplus f$. This shows $f\oplus f\leqSW f$.
\end{proof}

Now we prove a monotonicity result for sums analogously to the result for products in 
Proposition~\ref{prop:monotone}.

\begin{proposition}[Monotonicity of sums]
\label{prop:monotone-sum}
Let $f,f',g,g'$ be multi-valued functions on represented spaces. Then
\[f\leqW g\mbox{ and }f'\leqW g'\TO f\oplus f'\leqW g\oplus g'.\]
An analogous statement holds for strong Weihrauch reducibility $\leqSW$.
\end{proposition}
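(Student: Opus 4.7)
\emph{Plan.} I would parallel the structure of the proof of Proposition~\ref{prop:monotone}. Fix computable $H,K$ and $H',K'$ witnessing $f\leqW g$ and $f'\leqW g'$, so that for every realizer $G$ of $g$ the map $H\langle\id,GK\rangle$ realizes $f$, and analogously for the primed data. The strategy is to pair the two input translations via a single $K''$ and to introduce a dispatcher $H''$ that, on the output side, branches according to which summand of $g\oplus g'$ the given realizer selects, as flagged by the leading coordinate of its output under the representation $\delta_V\sqcup\delta_T$.

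\emph{Construction.} Set the computable $K''\langle p,q\rangle:=\langle K(p),K'(q)\rangle$. For any realizer $G''$ of $g\oplus g'$ and any $\langle p,q\rangle\in\dom((f\oplus f')[\delta_X,\delta_Z])$, the output $G''K''\langle p,q\rangle$ has the form $nr$ with $(\delta_V\sqcup\delta_T)(nr)\in(g\oplus g')[\delta_U,\delta_S]\langle K(p),K'(q)\rangle$. If $n=0$, then by definition of $\delta_V\sqcup\delta_T$ the tail $r$ is a $\delta_V$--name of a point in $g\delta_UK(p)$, so there is a realizer $G$ of $g$ with $GK(p)=r$; if $n\neq 0$, then $r$ is a $\delta_T$--name of a point in $g'\delta_SK'(q)$ and there is a realizer $G'$ of $g'$ with $G'K'(q)=r$. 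I would then define a computable $H''$ on inputs of the form $\langle\langle p,q\rangle,nr\rangle$ that inspects the leading entry $n$ and returns $0H\langle p,r\rangle$ when $n=0$ and $1H'\langle q,r\rangle$ otherwise. A direct calculation then shows that $H''\langle\id,G''K''\rangle$ is a realizer of $f\oplus f'$, giving $f\oplus f'\leqW g\oplus g'$. For the strong version the same $K''$ works and $H''$ is simplified to read only $nr$ and to return $0H(r)$ or $1H'(r)$; this is legitimate because in a strong reduction $HGK$ is itself a realizer of $f$, and similarly for $H'G'K'$.

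\emph{Main obstacle.} The only genuinely subtle point is that the realizer $G''$ of $g\oplus g'$ is free to elect either summand independently at each input, so $H''$ cannot be hard-wired to a single side. This is handled uniformly because the leading natural number of the output of $G''$ computably flags the chosen summand, so the branching on $n$ is computable and the dispatch is well-defined. Apart from that, the verification is routine bookkeeping with the tupling and sum conventions established in Section~\ref{sec:reducibility}.
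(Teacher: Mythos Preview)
Your proposal is correct and follows essentially the same approach as the paper: the paper likewise sets $K'':=K\otimes K'$ and defines $H''\langle\langle p,q\rangle,nr\rangle$ to return $0H\langle p,r\rangle$ when $n=0$ and $1H'\langle q,r\rangle$ otherwise, arguing exactly as you do that the leading digit of $G''K''\langle p,q\rangle$ computably flags which summand was chosen so that a suitable realizer $G$ or $G'$ exists with $GK(p)=r$ or $G'K'(q)=r$. Your treatment of the strong case is also correct and matches the paper's remark that the same argument works.
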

\begin{proof}
%Let $f:\In X\mto Y$, $f':\In Z\mto W$, $g:\In U\mto V$, $g':\In S\mto T$
%be given on represented spaces with representations 
%$\delta_X,\delta_Y,\delta_Z,\delta_W,\delta_U,\delta_V,\delta_S$ and $\delta_T$, respectively.
Let $H,H',K,K':\In\IN^\IN\to\IN^\IN$ be computable functions such that
$F=H\langle\id,GK\rangle$ is a realizer of $f$ for any realizer $G$ of $g$
and $F'=H'\langle\id,G'K'\rangle$ is a realizer of $f'$ for any realizer $G'$ of $g'$.
Define $K'':=(K\otimes K')$ and $H''$ by 
\[H''\langle\langle p,q\rangle,nr\rangle:=\left\{\begin{array}{ll}
  0H\langle p,r\rangle  & \mbox{if $n=0$}\\
  1H'\langle q,r\rangle & \mbox{otherwise}
\end{array}\right..\]
Then $H''$ and $K''$ are computable.
Let $G''$ be a realizer of $g\oplus g'$.
We fix a name $\langle p,q\rangle$ of an element in $\dom(f\oplus f')$.
Then there are realizers $G$ of $g$ and $G'$ of $g'$ such that
\[LG''\langle K(p),K'(q)\rangle=\left\{\begin{array}{ll}
  GK(p) & \mbox{if $G''\langle K(p),K'(q)\rangle(0)=0$}\\
  G'K'(q) & \mbox{otherwise}
\end{array}\right..\]
We obtain
\begin{eqnarray*}
&&  H''\langle\id,G''K''\rangle\langle p,q\rangle\\
&=& H''\langle\langle p,q\rangle,G''\langle K(p),K'(q)\rangle\rangle\\
&=& \left\{\begin{array}{ll}
    0H\langle p,LG''\langle K(p),K'(q)\rangle\rangle & \mbox{if $G''\langle K(p),K'(q)\rangle(0)=0$}\\
    1H'\langle q,LG''\langle K(p),K'(q)\rangle\rangle & \mbox{otherwise}
    \end{array}\right.\\
&=& \left\{\begin{array}{ll}
    0H\langle p,GK(p)\rangle & \mbox{if $G''K''\langle p,q\rangle(0)=0$}\\
    1H'\langle q,G'K'(q)\rangle & \mbox{otherwise}
    \end{array}\right..\\
\end{eqnarray*}
Thus $H''\langle\id,G''K''\rangle$ is a realizer of $f\oplus f'$. 
It follows that $f\oplus f'\leqW g\oplus g'$.
The result for strong Weihrauch reducibility can be proved analogously.
\end{proof}

This result shows, in particular, that the direct sum operation $\oplus$ can
be straightforwardly extended to Weihrauch degrees of multi-valued functions.
And more than this, they form a lower semi-lattice with the direct sum operation as
greatest lower bound operation.

\begin{proposition}[Greatest lower bound]
\label{prop:greatest-lower-bound}
Let $f$ and $g$ be multi-valued functions on represented spaces.
Then $f\oplus g$ is the greatest lower bound of $f$ and $g$
with respect to Weihrauch reducibility $\leqW$ and strong
Weihrauch reducibility $\leqSW$.
\end{proposition}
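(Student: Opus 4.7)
The plan is to establish the two separate claims: that $f\oplus g$ is a common lower bound of $f$ and $g$, and that it is the greatest such bound, and to do this for the strong reducibility $\leqSW$ first (the ordinary case then follows by essentially the same argument, with $H$ enriched to read the original input).

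For the lower bound direction, I would verify $f\oplus g\leqSW f$ directly: take $K:=\pi_1$ and the computable $H$ given by $H(r):=0r$. Then for any realizer $F$ of $f$ we have $HFK\langle p,q\rangle=0F(p)$, which under $\delta_Y\sqcup\delta_V$ names an element of $\{0\}\times f\delta_X(p)\In (f\oplus g)[\delta_X,\delta_U]\langle p,q\rangle$ for all $\langle p,q\rangle\in\dom((f\oplus g)[\delta_X,\delta_U])$. The symmetric choice $K:=\pi_2$ and $H(r):=1r$ gives $f\oplus g\leqSW g$. Since $\leqSW$ implies $\leqW$ by Proposition~\ref{prop:reducibility-computability}, the analogous reductions hold for $\leqW$ as well.

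For the greatest lower bound property under $\leqSW$, suppose $h\leqSW f$ and $h\leqSW g$ witnessed by computable $(H_1,K_1)$ and $(H_2,K_2)$, so that $H_1FK_1\vdash h$ for every realizer $F$ of $f$ and $H_2GK_2\vdash h$ for every realizer $G$ of $g$. I would define
\[K(r):=\langle K_1(r),K_2(r)\rangle\qquad\text{and}\qquad H(ns):=\left\{\begin{array}{ll}H_1(s)&\mbox{if }n=0\\ H_2(s)&\mbox{otherwise}\end{array}\right.,\]
which are both computable. If $J$ is any realizer of $f\oplus g$ and $r$ names a point of $\dom(h\delta_X)$, then $JK(r)$ is of the form $ns$ where either $n=0$ and $s$ is a $\delta_V$-name of some element of $f\delta_U K_1(r)$, or $n=1$ and $s$ is a $\delta_T$-name of an element of $g\delta_U'K_2(r)$. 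In the first case, $s$ can be realized as $F(K_1(r))$ for some (not necessarily computable) realizer $F$ of $f$, whence $H_1(s)=H_1FK_1(r)$ names an element of $h\delta_X(r)$; the second case is handled symmetrically. Thus $HJK$ realizes $h$, proving $h\leqSW f\oplus g$.

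For the ordinary case, an essentially identical argument works: the lower bound part is inherited from the $\leqSW$ case, and for the greatest property one replaces the case split above by $H\langle r,ns\rangle:=H_1\langle r,s\rangle$ if $n=0$ and $H_2\langle r,s\rangle$ otherwise, now using the reductions $H_i\langle\id,\cdot\,K_i\rangle\vdash h$. The only subtlety worth highlighting is the strong case: what makes it work is that in a strong reduction $H_i$ produces a name of an $h$-output from a name of an $f$- or $g$-output alone, without separate access to the original input $r$; the tag bit emitted by $J$ is then precisely enough information to select the right post-processor. Everything else is routine verification of domains using $\dom(f\oplus g)=\dom(f)\times\dom(g)$.
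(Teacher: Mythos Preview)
Your argument is correct, modulo some harmless notational slips (you write ``$\delta_V$-name of some element of $f\delta_U K_1(r)$'' where you mean a $\delta_Y$-name of an element of $f\delta_X K_1(r)$, and similarly invoke undeclared $\delta_T,\delta_{U'}$ for the $g$-branch; also $h$ need not live over $X$, so $\dom(h\delta_X)$ should be $\dom(h\delta_W)$ for the appropriate $\delta_W$). The key step---that for a fixed $r$ the output $s$ of $J$ on the appropriate component can be realized as $F(K_1(r))$ for \emph{some} realizer $F$ of $f$, so that the hypothesis on $(H_1,K_1)$ applies---is exactly the right observation and is sound.

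The paper takes a different, more modular route: rather than building the reduction $h\leqW f\oplus g$ by hand, it invokes the two preceding propositions. From $h\leqW f$ and $h\leqW g$ one gets $h\oplus h\leqW f\oplus g$ by monotonicity of sums (Proposition~\ref{prop:monotone-sum}), and then $h\equivW h\oplus h$ by idempotency (Proposition~\ref{prop:idempotent-sum}) finishes the job; the lower-bound direction is handled just as you do. Your direct construction is essentially an inlining of those two propositions into a single argument. The paper's version is shorter in context and highlights that the infimum property is a formal consequence of idempotency plus monotonicity; your version has the advantage of being self-contained and making the witnessing $K,H$ explicit.
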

\begin{proof}
If $h$ is a common lower bound of $f$ and $g$, i.e.\ $h\leqW f$ and $h\leqW g$,
then $h\oplus h\leqW f\oplus g$ by Proposition~\ref{prop:monotone-sum}.
But $h\equivW h\oplus h$ by Proposition~\ref{prop:idempotent-sum}.
This implies $h\leqW f\oplus g$.
On the other hand, it is easy to see that $f\oplus g\leqW f$ and $f\oplus g\leqW g$ hold.
If, for instance, $F$ is a realizer of $f$, then by $G\langle p,q\rangle:=0F(p)$ a realizer
of $f\oplus g$ is obtained.
The statement for strong reducibility can be proved analogously.
\end{proof}

We collect the algebraic properties of the sum operation in the following proposition.

\begin{proposition}[Sum]
\label{prop:sum}
Let $f,g$ and $h$ be multi-valued functions on represented spaces. Then
\begin{enumerate}
\item $f\equivSW f\oplus f$ \hfill (idempotent)
\item $(f\oplus g)\oplus h\equivSW f\oplus(g\oplus h)$ \hfill (associative)
\item $f\oplus g\equivSW g\oplus f$ \hfill (commutative)
\end{enumerate}
\end{proposition}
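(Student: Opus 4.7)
The plan is to handle the three parts in order of difficulty. Part (1), idempotency, is already established as Proposition~\ref{prop:idempotent-sum}, so nothing new is needed. Parts (2) and (3) are both pure bookkeeping: we only need to construct explicit computable $H$ and $K$ witnessing the strong reductions in each direction, because strong Weihrauch reducibility $\leqSW$ only asks for $F = HGK$ with the appropriate semantics on realizers, and because the representation $\delta_Y \sqcup \delta_V$ reads off the leading digit of a name to decide the summand.

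For commutativity, I would give a single pair of reductions (and observe symmetry takes care of the other direction). Given a realizer $G$ of $g \oplus f$, I would define $K\langle p, q\rangle := \langle q, p\rangle$ to swap the two input components (since $\dom(f\oplus g) = \dom(f)\times\dom(g)$ is represented by the product representation), and define $H$ to flip the leading tag bit of a name in the disjoint sum, i.e.\ $H(0 r) := 1 r$ and $H((n+1) r) := 0 r$. Then $HGK$ is easily checked to be a realizer of $f \oplus g$, using the definition of $\delta_Y \sqcup \delta_V$ and $\delta_V \sqcup \delta_Y$. Both directions are obtained by the same construction, yielding $f \oplus g \equivSW g \oplus f$.

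For associativity the same strategy applies, only with two-bit tag prefixes. A realizer of $(f \oplus g) \oplus h$ produces, on input representing an element of $\dom(f)\times\dom(g)\times\dom(h)$, output whose leading digit $m \in \{0,\text{else}\}$ distinguishes the $(f\oplus g)$-branch from the $h$-branch, and in the $(f \oplus g)$-branch the next digit $n$ further distinguishes $f$ from $g$. I would define $K$ by reassociating the input triple, e.g.\ $K\langle p, \langle q, r\rangle\rangle := \langle\langle p, q\rangle, r\rangle$, and define $H$ by translating the two-bit tag encodings: schematically, $H(00 y) := 0 y$, $H(0(n+1) y) := 1(0 y)$ when representing the $g$-summand, and $H((m+1) w) := 1(1 w)$ for the $h$-summand, with the obvious inverse translation in the other direction. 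Because $H$ and $K$ only inspect a fixed finite prefix and rearrange components, they are computable, and feeding any realizer of one side through them yields a realizer of the other side.

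The main obstacle, such as it is, is simply keeping the tag conventions of $\delta_Y \sqcup \delta_V$ straight — in particular, that the representation distinguishes ``$n=0$'' from ``$n \neq 0$'' rather than $n \in \{0,1\}$, so the translation functions $H$ must map arbitrary nonzero leading digits to the canonical $1$ tag (or leave them untouched, since $\delta_Y \sqcup \delta_V$ only cares about the dichotomy). Once this is fixed, all verifications are routine unwinding of definitions, and no use of multi-valuedness beyond the realizer formalism is needed.
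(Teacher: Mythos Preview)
Your proposal is correct and is exactly the kind of routine verification the paper has in mind: part (1) is literally Proposition~\ref{prop:idempotent-sum}, and for parts (2) and (3) the paper gives no proof at all (just as with Proposition~\ref{prop:product}, it tacitly leaves these to the reader as straightforward manipulations of tuplings and tag digits). Your explicit $H$ and $K$ for swapping and reassociating tags are precisely what is needed, and your caution about the ``$n=0$ versus $n\neq 0$'' convention in $\delta_Y\sqcup\delta_V$ is well placed.
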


Is there any multi-valued map that plays the role of a neutral element 
with respect to the sum operation?
Naturally, this would have to be a multi-valued function with an empty set of realizers.
One should note that this is not the nowhere defined function $f:\In X\mto Y$,
since $\{F:F\vdash f\}$ is the set of all function $F:\In\IN^\IN\to\IN^\IN$.
If we accept the Axiom of Choice, then clearly, a function without realizers does not exist and hence we add an extra
object $\emptyW$ to our structure with $\{F:F\vdash\emptyW\}=\emptyset$. 
Weihrauch reducibility can straightforwardly be extended to multi-valued functions enriched
by $\emptyW$, just by using $\emptyset$ as the set of realizers of $\emptyW$.
We denote the Weihrauch degree of $\emptyW$ by $\topW$.
Once again we assume that we have a fixed underlying set of represented spaces $\RR$
and now we also assume that this set includes $(\IN^\IN,\id)$ and that $\RR$ is closed
under products and direct sums.

\begin{definition}[Set of Weihrauch degrees]\rm
Let $\WW$ denote the set that contains the degree $\topW$ and all Weihrauch degrees 
of all multi-valued operations $f:\In X\mto Y$ with at least one computable point in $\dom(f)$
and with represented spaces $X,Y\in\RR$.
By $\botW$ we denote the degree of the computable functions in $\WW$.
\end{definition}

In the following theorem we collect all the structural properties of Weih\-rauch degrees that we have
studied so far.

\begin{theorem}[Weihrauch degrees]
\label{thm:Weihrauch}
The space $(\WW,\leqW)$ of Weihrauch degrees is a lower semi-lattice with least element $\botW$
and greatest element $\topW$ and with $\oplus$ as the greatest lower bound operation.
In particular, $(\WW,\oplus)$ is an idempotent monoid with neutral element $\topW$.
Moreover, $(\WW,\times)$ is a monoid with neutral element $\botW$.
\end{theorem}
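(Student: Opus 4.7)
The plan is to assemble the theorem from the already-proven pieces of the section, being careful about how the extra element $\topW$ interacts with everything.

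First I would note that by Lemma~\ref{lem:preorder}, $\leqW$ descends to a well-defined partial order on the quotient $\WW$. Proposition~\ref{prop:greatest-lower-bound} gives that $\oplus$ computes the greatest lower bound of any two degrees represented by genuine multi-valued functions, so $(\WW,\leqW)$ is a lower semi-lattice as soon as one also handles the exceptional element $\topW$. For $\topW$ I would argue directly from the definition of Weihrauch reducibility on sets: since $\{F : F \vdash \emptyW\} = \emptyset$, the clause $(\forall G \in \GG)(\exists F \in \FF)\; F = H\langle\id,GK\rangle$ is vacuously satisfied for every $\FF$, with $H = \pi_2$ and $K = \id$ say. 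Hence $f \leqW \emptyW$ for every $f$, so $\topW$ is indeed the greatest element.

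Next I would identify the least element. By Corollary~\ref{cor:least}, among multi-valued functions with at least one computable point in the domain the computable ones form the least Weihrauch degree, which is exactly $\botW$ by definition of $\WW$. For the meet property extended to $\topW$, the general semi-lattice fact that $a \wedge \topW = a$ gives $f \oplus \topW \equivW f$ for all $f \in \WW$, which yields the neutral-element clause for the monoid $(\WW,\oplus)$. Idempotency, associativity and commutativity of $\oplus$ on ordinary degrees come from Proposition~\ref{prop:sum}; monotonicity of $\oplus$ (Proposition~\ref{prop:monotone-sum}) ensures that $\oplus$ is well-defined on degrees in the first place.

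For the monoid $(\WW,\times)$ I would invoke Proposition~\ref{prop:monotone} to see that $\times$ descends to degrees, and Proposition~\ref{prop:product} to obtain associativity, commutativity, and the identity $f \times \id \equivW f$. Since $\id : \IN^\IN \to \IN^\IN$ lies in $\botW$ and $\times$ respects $\equivW$, the degree $\botW$ is neutral for $\times$. The only mild subtlety is again $\topW$: one must check that $\topW \times f \equivW \topW$, which follows because a realizer of $\emptyW \times f$ would project to a realizer of $\emptyW$, forcing the realizer set to be empty.

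The main obstacle I expect is purely bookkeeping around $\topW$: the direct sum and product were defined only for genuine multi-valued functions, so I must make precise how they extend to the added element and verify the vacuous reductions go through in both directions. Once that is spelled out, every remaining clause is a direct citation of Proposition~\ref{prop:greatest-lower-bound}, Proposition~\ref{prop:sum}, Proposition~\ref{prop:product} and Corollary~\ref{cor:least}.
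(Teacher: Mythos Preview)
Your proposal is correct and matches the paper's approach exactly: the theorem is presented in the paper as a summary of structural properties with no separate proof, so the intended argument is precisely the assembly from Propositions~\ref{prop:monotone}, \ref{prop:product}, \ref{prop:monotone-sum}, \ref{prop:greatest-lower-bound}, \ref{prop:sum} and Corollary~\ref{cor:least} that you outline. Your explicit bookkeeping around $\topW$ (vacuous reductions, extending $\oplus$ and $\times$ to $\emptyW$) is in fact more careful than what the paper spells out.
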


We note that all the results regarding the product $\times$ also hold true if we
restrict the consideration to single-valued functions on represented spaces,
or even more concrete, to the set of single-valued functions $f:\In\IN^\IN\to\IN^\IN$
on Baire space (with at least one computable point in the domain). 
This is because the product $\times$ preserves single-valuedness, in 
contrast to the sum $\oplus$.
We also mention that the underlying set $\RR$ of represented spaces can always be assumed to be
some Cartesian closed category of admissibly represented spaces, 
whenever that is useful \cite{Sch02}.

\section{Parallelization of Weihrauch degrees}
\label{sec:parallelization}

In this section we study parallelization and we show that it is
a closure operator on Weihrauch degrees.
Parallelization can be considered as infinite product of an operation
with itself. We also show that parallelized Weihrauch degrees form a
lattice.

\begin{definition}[Parallelization]\rm
Let $f:\In X\mto Y$ be a multi-valued function. Then we define
the {\em parallelization} $\widehat{f}:\In X^\IN\mto Y^\IN$ of $f$ by 
\[\widehat{f}(x_i)_{i\in\IN}:=\bigtimes_{i=0}^\infty f(x_i)\]
for all $(x_i)_{i\in\IN}\in X^\IN$.
\end{definition}

We also assume that whenever a set $X$ is represented by $\delta:\In\IN^\IN\to X$, then
the sequence set $X^\IN$ is represented by $\delta^\IN:\In\IN^\IN\to X^\IN$, defined by
\[\delta^\IN\langle p_0,p_1,p_2,...\rangle:=(\delta(p_i))_{i\in\IN}.\]
Consequently, it follows that whenever $F$ is a realizer of $f$, then
$\overline{F}$ is a realizer of $\widehat{f}$, where 
\[\overline{F}:\In\IN^\IN\to\IN^\IN,\langle p_0,p_1,p_2,...\rangle\mapsto\langle F(p_0),F(p_1),F(p_2),...\rangle.\]
It is clear that for single-valued functions $F$ 
on Baire space $\widehat{F}\equivSW\overline{F}$.
We prove that parallelization acts as a closure operator with
respect to Weihrauch reducibility.

\begin{proposition}[Parallelization]
\label{prop:parallelization}
Let $f$ and $g$ be multi-valued functions on represented spaces. Then
\begin{enumerate}
\item $f\leqW\widehat{f}$                       \hfill (extensive)
\item $f\leqW g\TO\widehat{f}\leqW\widehat{g}$  \hfill (increasing)
\item $\widehat{f}\equivW\;\widehat{\!\!\widehat{f}}$ \hfill (idempotent)
\end{enumerate}
An analogous result holds for strong Weihrauch reducibility.
\end{proposition}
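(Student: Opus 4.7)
The plan is to prove each of the three properties componentwise, exploiting how parallelization bundles independent instances of the underlying function; I would arrange each reduction so that it also witnesses the strong version claimed in the final sentence.

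For extensivity (1), I would reduce $f$ to $\widehat{f}$ by duplicating the input. Setting $K(p):=\langle p,p,p,\ldots\rangle$ and letting $H$ project to the first coordinate of the second tupled argument works: for any name $p$ of a point $x\in\dom(f)$, the sequence $K(p)$ names $(x,x,\ldots)\in\dom(\widehat{f})$, so any realizer $G$ of $\widehat{f}$ produces $GK(p)=\langle q_0,q_1,\ldots\rangle$ with each $q_i$ realizing $f$ at $x$, and $q_0$ alone already yields a valid realizer output.

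For monotonicity (2), assume $f\leqW g$ is witnessed by computable $H,K$. I would define the parallelized translations
\[K'\langle p_0,p_1,\ldots\rangle:=\langle K(p_0),K(p_1),\ldots\rangle,\]
and let $H'$ act coordinatewise, sending $\langle\langle p_0,p_1,\ldots\rangle,\langle r_0,r_1,\ldots\rangle\rangle$ to $\langle H\langle p_0,r_0\rangle,H\langle p_1,r_1\rangle,\ldots\rangle$. For any realizer $G'$ of $\widehat{g}$ and any name $\bar p=\langle p_0,p_1,\ldots\rangle$ of a point in $\dom(\widehat{f})$, the output $G'K'(\bar p)=\langle r_0,r_1,\ldots\rangle$ has each $r_i$ realizing $g$ at the point named by $K(p_i)$. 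For idempotency (3), the direction $\widehat{f}\leqW\;\widehat{\!\!\widehat{f}}$ is immediate from (1) applied to $\widehat{f}$; for the converse $\widehat{\!\!\widehat{f}}\leqW\widehat{f}$, I would use a computable bijection $\IN\times\IN\to\IN$ to flatten a double sequence $\langle\langle p_{i0},p_{i1},\ldots\rangle\rangle_{i\in\IN}$ into a single sequence indexed along the bijection, invoke a realizer of $\widehat{f}$ once, and unflatten the resulting output; both translations are computable and no feedthrough of the input to the outer map is required, so the reduction is actually strong.

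The main obstacle is a subtle point in step (2): a realizer $G'$ of $\widehat{g}$ need not factor as $\overline{G}$ for a single realizer $G$ of $g$, so the different slices $r_i$ may behave as if produced by different realizers of $g$. The resolving idea is a slice-by-slice extension: for each fixed $i$, the single value $r_i$ at input $K(p_i)$ can be extended to a full realizer $G_i$ of $g$ (such an extension exists because $g$ has nonempty values on every domain point). Applying the hypothesis $f\leqW g$ to $G_i$ yields that $H\langle p_i,r_i\rangle=H\langle p_i,G_iK(p_i)\rangle$ realizes $f$ at $p_i$; hence $H'\langle\bar p,G'K'(\bar p)\rangle$ realizes $\widehat{f}$ at $\bar p$, establishing the monotonicity step and, combined with (1) and (3), the closure operator property.
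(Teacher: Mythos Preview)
Your proposal is correct and follows essentially the same route as the paper: duplicate-and-project for (1), coordinatewise application of the witnessing $H,K$ for (2), and flattening via a computable pairing bijection for (3). Your explicit handling of the ``obstacle'' in (2)---that a realizer of $\widehat{g}$ need not be of the form $\overline{G}$, resolved by extending each slice value to a full realizer of $g$---is a point the paper's proof passes over silently (it simply asserts that $\overline{H}L\langle\id,G'\overline{K}\rangle$ realizes $\widehat{f}$), though the same extension idea appears elsewhere in the paper (e.g.\ in the monotonicity proof for products).
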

\begin{proof}
We consider $f:\In X\mto Y$ and $g:\In U\mto V$.
If $F:\In\IN^\IN\to\IN^\IN$ is a realizer of $\widehat{f}:\In X^\IN\to Y^\IN$,
then this realizer can be used to compute a realizer $F'$ of $f$ by
\[F'(p)=\pi_0F\langle p,p,p,...\rangle.\]
This shows $f\leqW\widehat{f}$. Here 
$\pi_0:\IN^\IN\to\IN^\IN,\langle p_0,p_1,p_2,...\rangle\mapsto p_0$
denotes the projection on the first component.
Now, if $f\leqW g$, then there are computable $H$ and $K$, such
that for any realizer $G$ of $g$, the function $H\langle\id,GK\rangle$
is a realizer for $f$. Then we define a computable function $L$ by
\[L\langle\langle p_0,p_1,p_2,...\rangle,\langle q_0,q_1,q_2,...\rangle\rangle:=
  \langle\langle p_0,q_0\rangle,\langle p_1,q_1\rangle,\langle p_2,q_2\rangle,...\rangle\]
and we obtain that whenever $G'$ is a realizer of $\widehat{g}$, then
$\overline{H}L\langle\id,G'\overline{K}\rangle$ is a realizer of $\widehat{f}$.
Finally, if $F$ is a realizer for $\widehat{f}$, then the function $F'$,
defined by
\[F'\langle\langle p_{\langle 0,0\rangle},p_{\langle 0,1\rangle},p_{\langle 0,2\rangle},...\rangle,\langle p_{\langle 1,0\rangle},p_{\langle 1,1\rangle},p_{\langle 1,2\rangle},...\rangle,...\rangle
  :=HF\langle p_0,p_1,p_2,...\rangle\]
with
\[H\langle q_0,q_1,q_2,...\rangle:=\langle\langle q_{\langle 0,0\rangle},q_{\langle 0,1\rangle},q_{\langle 0,2\rangle},...\rangle,\langle q_{\langle 1,0\rangle},q_{\langle 1,1\rangle},q_{\langle 1,2\rangle},...\rangle,...\rangle\]
is a realizer of $\widehat{\widehat{f}}$.
Essentially the same proof also shows that parallelization acts as a closure operator
for strong Weihrauch reducibility.
\end{proof}

The fact that Weihrauch reducibility is a closure operator allows
us to define a parallelized version of Weihrauch reducibility.

\begin{definition}[Parallel reducibility]\rm
Let $f$ and $g$ be multi-valued operations on represented spaces.
Then we say that $f$ is {\em parallely Weihrauch reducible} to $g$, 
in symbols $f\leqPW g$, if $\widehat{f}\leqW\widehat{g}$.
We say that $f$ is {\em parallely Weihrauch equivalent} to $g$, in symbols
$f\equivPW g$, if $f\leqPW g$ and $g\leqPW f$ holds.
We call the corresponding equivalence classes {\em parallel Weihrauch degrees}.
\end{definition}

It is cleat that $\leqPW$ is a preorder, i.e.\ it is reflexive and transitive,
since it inherits these properties from $\leqW$ (see Lemma~\ref{lem:preorder}).
The fact that parallelization is a closure operator gives us the following
alternative way of characterizing parallel Weihrauch reducibility.

\begin{lemma}
Let $f$ and $g$ be multi-valued operations on represented spaces.
Then
\[f\leqPW g\iff f\leqW\widehat{g}.\]
\end{lemma}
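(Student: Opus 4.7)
The plan is to unfold the definition $f \leq_{PW} g \iff \widehat{f} \leq_W \widehat{g}$ and then exploit the three properties of parallelization established in Proposition~\ref{prop:parallelization}, namely that $\widehat{\phantom{f}}$ is extensive, increasing, and idempotent with respect to $\leq_W$.

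For the forward direction, assume $f \leq_{PW} g$, i.e.\ $\widehat{f} \leq_W \widehat{g}$. Since parallelization is extensive, $f \leq_W \widehat{f}$, and transitivity of $\leq_W$ (Lemma~\ref{lem:preorder}) yields $f \leq_W \widehat{g}$.

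For the backward direction, assume $f \leq_W \widehat{g}$. Applying the increasing property of parallelization gives $\widehat{f} \leq_W \;\widehat{\!\!\widehat{g}}$. But by idempotency, $\widehat{\!\!\widehat{g}} \equiv_W \widehat{g}$, so again by transitivity $\widehat{f} \leq_W \widehat{g}$, which is $f \leq_{PW} g$.

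Both directions are essentially immediate given Proposition~\ref{prop:parallelization}, so there is no substantial obstacle; the statement is really just a reformulation of the closure-operator nature of $\widehat{\phantom{f}}$. The only thing to be careful about is to cite the three clauses of Proposition~\ref{prop:parallelization} in the right order and to appeal to transitivity of $\leq_W$ (from Lemma~\ref{lem:preorder}) rather than silently chaining reductions.
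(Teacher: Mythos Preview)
Your proof is correct and is exactly the argument the paper has in mind: the lemma is stated without proof, preceded only by the remark that it follows from parallelization being a closure operator, which is precisely the content of Proposition~\ref{prop:parallelization} that you invoke.
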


We call a multi-valued function $f$ on represented spaces {\em parallelizable}
if $f\equivW\widehat{f}$. Correspondingly, we call a Weihrauch degree {\em parallelizable},
if it has a parallelizable member. This terminology is similar to cylindrification.
Obviously, as a consequence of the previous result we obtain that
for parallelizable $g$ we have $f\leqW g$ if and only if $f\leqPW g$.
Parallel Weihrauch degrees have somewhat nicer algebraic features
than Weihrauch degrees. This is essentially, because parallelization
commutes with products in the following sense and because parallel degrees are idempotent.

\begin{proposition}[Products and parallelization]
\label{prop:product-parallelization}
Let $f$ and $g$ be multi-valued operations on represented spaces. Then
\[\widehat{f\times g}\equivSW\widehat{f}\times\widehat{g}.\]
\end{proposition}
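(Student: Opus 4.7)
The plan is to prove both directions of the strong equivalence by exhibiting explicit computable rearrangements of Baire-space names. The whole content of the claim is that the data $(x_0,u_0),(x_1,u_1),(x_2,u_2),\dots$ for $\widehat{f\times g}$ carries exactly the same information as the pair $((x_i)_{i\in\IN},(u_i)_{i\in\IN})$ for $\widehat{f}\times\widehat{g}$, so the reductions are just (un)interleaving operations on names, and they compose with realizers in the way required for $\leqSW$.

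First I would unfold the representations: inputs to $\widehat{f\times g}$ have names of the form $\langle\langle p_0,q_0\rangle,\langle p_1,q_1\rangle,\dots\rangle$, while inputs to $\widehat{f}\times\widehat{g}$ have names $\langle\langle p_0,p_1,\dots\rangle,\langle q_0,q_1,\dots\rangle\rangle$. Outputs have the dual shape. Using the standard tupling $\pi,\pi'$ and projections $\pi_1,\pi_2$ supplied by the paper, define a computable ``transposition'' $T_{\In}$ that sends a sequence of pairs to a pair of sequences,
\[T_{\In}\langle\langle p_0,q_0\rangle,\langle p_1,q_1\rangle,\dots\rangle := \langle\langle p_0,p_1,\dots\rangle,\langle q_0,q_1,\dots\rangle\rangle,\]
and a computable inverse $T_{\Out}$ which does the converse on the output side. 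Both are computable and mutually inverse.

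For $\widehat{f\times g}\leqSW\widehat{f}\times\widehat{g}$, I would take $K:=T_{\In}$ and $H:=T_{\Out}$. Given any realizer $R$ of $\widehat{f}\times\widehat{g}$, checking that $H R K$ is a realizer of $\widehat{f\times g}$ is a direct unwinding: applied to a name of $((x_i,u_i))_{i\in\IN}$, the function $K$ produces a name of $((x_i)_{i\in\IN},(u_i)_{i\in\IN})$, $R$ returns a name of some $((y_i)_{i\in\IN},(v_i)_{i\in\IN})$ with $y_i\in f(x_i)$ and $v_i\in g(u_i)$, and $H$ reassembles this into a name of $((y_i,v_i))_{i\in\IN}$, which lies in $\widehat{f\times g}((x_i,u_i))_{i\in\IN}$. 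The reverse reduction $\widehat{f}\times\widehat{g}\leqSW\widehat{f\times g}$ is completely symmetric: use $K:=T_{\In}^{-1}$ and $H:=T_{\Out}^{-1}$ and check the analogous identity. Both $T_{\In}^{\pm1}$ and $T_{\Out}^{\pm1}$ are obviously computable from the standard tupling machinery.

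No real obstacle is expected; the argument is purely combinatorial bookkeeping with $\pi,\pi'$. The only point requiring mild attention is that the reductions are of strong form (i.e.\ $H\circ G''\circ K$, not $H\langle\id,G''K\rangle$), which works here because the input can be recovered from $K$'s output via the computable inverse $T_{\In}^{-1}$, so no side channel carrying $\id$ is needed. That observation together with the two explicit computable maps $T_{\In},T_{\Out}$ yields $\widehat{f\times g}\equivSW\widehat{f}\times\widehat{g}$.
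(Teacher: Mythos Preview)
Your proposal is correct and takes essentially the same approach as the paper: the paper exhibits the computable homeomorphism $h_{X,U}:X^\IN\times U^\IN\to(X\times U)^\IN$, $((x_i),(u_i))\mapsto(x_i,u_i)_i$ (and the analogous $h_{Y,V}$ on the output side) and observes $\widehat{f\times g}=h_{Y,V}\circ(\widehat{f}\times\widehat{g})\circ h_{X,U}^{-1}$, which is precisely your $T_{\In},T_{\Out}$ argument phrased at the level of represented spaces rather than names.
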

\begin{proof}
 If $f$ and $g$
are of type $f:\In X\mto Y$ and $g:\In U\mto V$, then $\widehat{f\times g}:\In (X\times U)^\IN\mto (Y\times V)^\IN$ and 
$\widehat{f}\times\widehat{g}:\In X^\IN\times U^\IN\mto Y^\IN\times V^\IN$.
We can identify these two operations using the computable homeomorphism
\[h_{X,U}:X^\IN\times U^\IN\to (X\times U)^\IN,((x_i)_{i\in\IN},(u_i)_{i\in\IN})\mapsto(x_i,u_i)_{i\in\IN}\]
for $X,U$ and an analogous map $h_{Y,V}$ for $Y\times V$.
More specifically, we obtain
\[\widehat{f\times g}=h_{Y,V}(\widehat{f}\times\widehat{g})h_{X,U}^{-1}\mbox{ and }
  (\widehat{f}\times\widehat{g})=h_{Y,V}^{-1}(\widehat{f\times g})h_{X,U}.
\]
This proves the claim since the homeomorphisms and their inverses are computable.
\end{proof}

This result allows us to consider the product as operation on parallel
Weihrauch degrees, because it implies, in particular, that the product
of two parallel degrees is parallel again. 
Similarly, we can prove the following result.

\begin{proposition}[Idempotency and parallelization]
\label{prop:absorb}
For a multi-valued function $f$ on represented spaces we obtain
\[\widehat{f}\equivSW\widehat{f}\times\widehat{f}.\]
\end{proposition}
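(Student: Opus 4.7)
The plan is to mimic the proof of Proposition~\ref{prop:product-parallelization}: I will exhibit a computable homeomorphism on the input side and one on the output side that together conjugate $\widehat{f}$ with $\widehat{f}\times\widehat{f}$, and then read off both strong reductions at once.

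Concretely, for a represented space $(X,\delta_X)$ I would define the interleaving map
\[s_X:X^\IN\times X^\IN\to X^\IN,\;\;((x_i)_{i\in\IN},(x_i')_{i\in\IN})\mapsto(z_i)_{i\in\IN},\]
where $z_{2i}:=x_i$ and $z_{2i+1}:=x_i'$, and analogously $s_Y$ for $Y$. On the level of Baire space names these maps merely rearrange the nested tupling $\langle\langle p_0,p_1,\ldots\rangle,\langle q_0,q_1,\ldots\rangle\rangle$ into $\langle p_0,q_0,p_1,q_1,\ldots\rangle$, so $s_X$, $s_Y$ and their inverses are computable homeomorphisms with respect to the canonical representations $[\delta_X^\IN,\delta_X^\IN]$ and $\delta_X^\IN$ (and similarly for $Y$).

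A direct unfolding of the definitions then yields the identities
\[\widehat{f}=s_Y\circ(\widehat{f}\times\widehat{f})\circ s_X^{-1}\quad\text{and}\quad\widehat{f}\times\widehat{f}=s_Y^{-1}\circ\widehat{f}\circ s_X,\]
since $(w_i)_{i\in\IN}$ lies in $\widehat{f}((z_i)_{i\in\IN})$ if and only if $((w_{2i})_{i\in\IN},(w_{2i+1})_{i\in\IN})$ lies in $(\widehat{f}\times\widehat{f})((z_{2i})_{i\in\IN},(z_{2i+1})_{i\in\IN})$. Taking computable realizers of the (single-valued) homeomorphisms as the outer functions $H,K$ in a strong reduction of the form $F=HGK$ immediately gives both $\widehat{f}\leqSW\widehat{f}\times\widehat{f}$ and $\widehat{f}\times\widehat{f}\leqSW\widehat{f}$. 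I do not anticipate any real obstacle: the only slightly subtle point is that $\widehat{f}$ and $\widehat{f}\times\widehat{f}$ are multi-valued, but this is harmless because $s_X$ and $s_Y$ are single-valued bijections, so the conjugation carries over to realizers without difficulty, exactly as in Proposition~\ref{prop:product-parallelization}.
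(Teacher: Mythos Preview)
Your proposal is correct and follows essentially the same route as the paper: both use the interleaving bijection $X^\IN\times X^\IN\to X^\IN$ (your $s_X$, the paper's $h$) to establish $\widehat{f}\times\widehat{f}\leqSW\widehat{f}$. The paper simply declares the reverse direction $\widehat{f}\leqSW\widehat{f}\times\widehat{f}$ to be clear (e.g.\ via diagonal and projection), whereas you obtain it symmetrically from the same conjugation identity; this is a cosmetic difference, not a substantive one.
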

\begin{proof}
We prove $\widehat{f}\times\widehat{f}\leqSW\widehat{f}$. If $f$ is of type
$f:\In X\mto Y$, then $\widehat{f}:\In X^\IN\mto Y^\IN$ and 
$\widehat{f}\times\widehat{f}:\In X^\IN\times X^\IN\mto Y^\IN\times Y^\IN$.
We can identify these two operations using the computable homeomorphism
\[h:X^\IN\times X^\IN\to X^\IN\mbox{ with }h((x_i)_{i\in\IN},(x_i')_{i\in\IN})(n):=\left\{\begin{array}{ll}
  x_i & \mbox{if $n=2i$}\\
  x_i' & \mbox{if $n=2i+1$}
\end{array}\right.\]
for $X$ and an analogous map for $Y$.
This proves $\widehat{f}\times\widehat{f}\leqSW\widehat{f}$. 
It is clear that $\widehat{f}\leqSW\widehat{f}\times\widehat{f}$ holds.
\end{proof}

We can also formulate the following version of this observation.

\begin{corollary}
Any parallelizable Weihrauch degree is idempotent.
\end{corollary}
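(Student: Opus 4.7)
The plan is to derive the corollary as an essentially one-line consequence of the preceding proposition together with the monotonicity of $\times$ on Weihrauch degrees. Let me fix $f$ to be a parallelizable representative of the given degree, so $f\equivW\widehat{f}$ holds by assumption.

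First I would invoke Proposition~\ref{prop:absorb}, which gives $\widehat{f}\equivSW\widehat{f}\times\widehat{f}$, and hence in particular $\widehat{f}\equivW\widehat{f}\times\widehat{f}$ by part (1) of Proposition~\ref{prop:reducibility-computability}. Next, from $f\equivW\widehat{f}$ and the monotonicity of the product operation established in Proposition~\ref{prop:monotone}, I obtain $f\times f\equivW\widehat{f}\times\widehat{f}$. Chaining these three equivalences yields
\[f\times f\equivW\widehat{f}\times\widehat{f}\equivW\widehat{f}\equivW f,\]
so that $f\equivW f\times f$, which is precisely the idempotency of the Weihrauch degree of $f$.

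There is no serious obstacle here: Proposition~\ref{prop:absorb} does the substantive work of showing that parallelizations absorb under product, and monotonicity of $\times$ (Proposition~\ref{prop:monotone}) is exactly what is needed to transfer this absorption property across the Weihrauch equivalence $f\equivW\widehat{f}$. The only mild subtlety to note is that idempotency of a degree is well-defined, i.e.\ independent of the chosen representative; this is itself an immediate consequence of Proposition~\ref{prop:monotone}, since $f\equivW f'$ implies $f\times f\equivW f'\times f'$, so if one representative is idempotent then so is every other. Hence the corollary follows without any further computation.
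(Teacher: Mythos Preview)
Your proof is correct and matches the paper's approach: the corollary is stated there as an immediate reformulation of Proposition~\ref{prop:absorb}, and you have simply spelled out the obvious chain $f\equivW\widehat{f}\equivW\widehat{f}\times\widehat{f}\equivW f\times f$ using monotonicity of the product.
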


The idempotency of parallel Weihrauch degrees has the consequence that the product 
actually is the least upper bound operation for parallel Weihrauch degrees.

\begin{proposition}[Least upper bound]
\label{prop:least-lower-bound}
Let $f$ and $g$ be multi-valued functions on represented spaces.
Then $f\times g$ is the least upper bound of $f$ and $g$
with respect to parallel Weihrauch reducibility $\leqPW$.
\end{proposition}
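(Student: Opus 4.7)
The plan is to verify the two halves of a least-upper-bound characterization separately, relying entirely on the structural facts about parallelization and products that have just been established.

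First I would show that $f\times g$ is an upper bound of $f$ and $g$ with respect to $\leqPW$. By definition this means $\widehat{f}\leqW\widehat{f\times g}$ and $\widehat{g}\leqW\widehat{f\times g}$. Using Proposition~\ref{prop:product-parallelization} we can replace $\widehat{f\times g}$ by $\widehat{f}\times\widehat{g}$ up to strong equivalence, so the claim reduces to $\widehat{f}\leqW\widehat{f}\times\widehat{g}$ and the symmetric statement, which are instances of the general fact (noted after Proposition~\ref{prop:monotone}) that each factor reduces to a product, provided the domains contain computable points; since we work in the set $\WW$ this is granted.

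For the least-upper-bound property, I would assume that $h$ is any common parallel upper bound, i.e.\ $f\leqPW h$ and $g\leqPW h$, which unfolds to $\widehat{f}\leqW\widehat{h}$ and $\widehat{g}\leqW\widehat{h}$. Applying the monotonicity of products (Proposition~\ref{prop:monotone}) gives $\widehat{f}\times\widehat{g}\leqW\widehat{h}\times\widehat{h}$, and Proposition~\ref{prop:absorb} collapses the right-hand side to $\widehat{h}$ via $\widehat{h}\times\widehat{h}\equivSW\widehat{h}$. Combining this with $\widehat{f\times g}\equivSW\widehat{f}\times\widehat{g}$ from Proposition~\ref{prop:product-parallelization} yields $\widehat{f\times g}\leqW\widehat{h}$, i.e.\ $f\times g\leqPW h$, as required.

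There is really no hard step here: the only thing one must not overlook is that the argument rests essentially on the idempotency $\widehat{h}\times\widehat{h}\equivSW\widehat{h}$, which is what makes the product behave as a supremum for parallel degrees even though it is only an upper bound (not a least upper bound) for ordinary Weihrauch degrees. The result is thus a bookkeeping consequence of Propositions~\ref{prop:monotone}, \ref{prop:product-parallelization}, and~\ref{prop:absorb}.
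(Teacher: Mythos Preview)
Your proof is correct and follows essentially the same approach as the paper's. The only minor difference is that the paper works directly with $f\leqW\widehat{h}$ (via the characterization $f\leqPW h\iff f\leqW\widehat{h}$) and applies monotonicity to obtain $f\times g\leqW\widehat{h}\times\widehat{h}\equivW\widehat{h}$, thereby avoiding the detour through Proposition~\ref{prop:product-parallelization}; your version instead stays at the level of $\widehat{f},\widehat{g}$ and uses Proposition~\ref{prop:product-parallelization} to pass between $\widehat{f\times g}$ and $\widehat{f}\times\widehat{g}$, which is a harmless extra step.
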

\begin{proof}
If $h$ is a common parallel upper bound of $f$ and $g$, i.e.\ $f\leqW\widehat{h}$ and $g\leqW\widehat{h}$,
then $f\times g\leqW\widehat{h}\times\widehat{h}$ by Proposition~\ref{prop:monotone}.
But $\widehat{h}\equivW\widehat{h}\times\widehat{h}$ by Proposition~\ref{prop:absorb}.
This implies $f\times g\leqW\widehat{h}$.
On the other hand, it is easy to see that $f\leqW f\times g$ and $g\leqW f\times g$ hold,
if $f$ and $g$ have at least one computable point in their domain.
\end{proof}

The next result treats the interaction of parallelization with sums.
We note that we do not prove that parallelization commutes with sums,
but just that the sum of parallelized functions is parallelizable.

\begin{proposition}[Sums and parallelization]
\label{prop:sum-parallelization}
Let $f$ and $g$ be multi-valued operations on represented spaces. Then
\[\widehat{f\oplus g}\leqSW\widehat{\widehat{f}\oplus\widehat{g}}\equivSW\widehat{f}\oplus\widehat{g}.\]
\end{proposition}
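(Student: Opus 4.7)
The plan is to prove the two assertions separately, with only the last being substantial. The reduction $\widehat{f\oplus g}\leqSW\widehat{\widehat{f}\oplus\widehat{g}}$ is purely formal: extensivity of parallelization with respect to strong reducibility (Proposition~\ref{prop:parallelization}) gives $f\leqSW\widehat{f}$ and $g\leqSW\widehat{g}$; monotonicity of sums (Proposition~\ref{prop:monotone-sum}) then yields $f\oplus g\leqSW\widehat{f}\oplus\widehat{g}$; applying the increasing property of parallelization once more delivers the desired reduction. Likewise, one half of the equivalence, $\widehat{f}\oplus\widehat{g}\leqSW\widehat{\widehat{f}\oplus\widehat{g}}$, is again just extensivity.

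The content lies in showing $\widehat{\widehat{f}\oplus\widehat{g}}\leqSW\widehat{f}\oplus\widehat{g}$. The plan is to exhibit computable $H,K$ so that for every realizer $G$ of $\widehat{f}\oplus\widehat{g}$ the composition $HGK$ is a realizer of $\widehat{\widehat{f}\oplus\widehat{g}}$. An input to $\widehat{\widehat{f}\oplus\widehat{g}}$ is (a name of) a sequence $((\vec x_i,\vec u_i))_{i\in\IN}$, where each $\vec x_i\in X^\IN$ and $\vec u_i\in U^\IN$. The map $K$ interleaves, via a fixed computable bijection $\IN\cong\IN\times\IN$, all components of all $\vec x_i$ into a single sequence $\vec X\in X^\IN$ and analogously produces $\vec U\in U^\IN$, and returns $\langle\vec X,\vec U\rangle$, which is a legitimate input for $\widehat{f}\oplus\widehat{g}$. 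Then $G\langle\vec X,\vec U\rangle$ is a name of the form $nr$ with $n\in\{0,1\}$: for $n=0$ the tail $r$ names an element of $\widehat{f}(\vec X)$, for $n=1$ it names an element of $\widehat{g}(\vec U)$.

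The map $H$ inspects the leading tag. In the $0$-case it uses the inverse of the interleaving to split $r$ back into blocks $(\vec y_i)_{i\in\IN}$ matching the original partition, tags each $\vec y_i$ with the prefix $0$, and outputs the sequence $(0\vec y_i)_{i\in\IN}$. The $1$-case is treated symmetrically with blocks of $\widehat{g}$-solutions tagged by $1$. Since $\vec y_i\in\widehat{f}(\vec x_i)$, the entry $0\vec y_i$ names a valid element of $(\widehat{f}\oplus\widehat{g})(\vec x_i,\vec u_i)$, and similarly on the $1$-side; hence the resulting sequence realizes $\widehat{\widehat{f}\oplus\widehat{g}}$ on the given input.

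The decisive observation that the construction exploits is that within a single instance of $\widehat{f}\oplus\widehat{g}$ a realizer may choose only one side, whereas an output of $\widehat{\widehat{f}\oplus\widehat{g}}$ allows the side-tag to vary freely with the coordinate index; a uniformly tagged output is therefore always admissible, so we are free to copy the single choice made by $G$ into every coordinate. I do not expect any conceptual obstacle beyond this; the main care needed is in writing out the encoding with the pairing $\IN\cong\IN\times\IN$ and verifying that $K$, $H$, and hence $HGK$ indeed lie in the appropriate domains for every realizer $G$.
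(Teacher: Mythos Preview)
Your proposal is correct and follows essentially the same approach as the paper: the paper defines computable maps $h'$ (your $K$) that interleaves the double-indexed sequences via a bijection $\IN\cong\IN\times\IN$, and $h$ (your $H$) that de-interleaves and copies the single tag to every coordinate, then observes $h(\widehat{f}\oplus\widehat{g})h'\subseteq\widehat{\widehat{f}\oplus\widehat{g}}$; the remaining reductions are handled exactly as you do, via extensivity and monotonicity. Your ``decisive observation'' is precisely the point the paper's construction exploits.
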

\begin{proof}
If $f$ and $g$ are of type $f:\In X\mto Y$ and $g:\In U\mto V$, then 
%$\widehat{f\oplus g}:\In (X\times U)^\IN\mto (Y\oplus V)^\IN$ and 
$\widehat{f}\oplus\widehat{g}:\In X^\IN\times U^\IN\mto Y^\IN\oplus V^\IN$ and
$\widehat{\widehat{f}\oplus\widehat{g}}:\In(X^\IN\times U^\IN)^\IN\mto(Y^\IN\oplus V^\IN)^\IN$.
We define two computable maps $h':(X^\IN\times U^\IN)^\IN\to X^\IN\times U^\IN$ by
\[h'((x_{ij})_{i\in\IN},(u_{ij})_{i\in\IN})_{j\in\IN}:=
  ((x_{ij})_{\langle i,j\rangle\in\IN},(u_{ij})_{\langle i,j\rangle\in\IN}).\]
and $h:Y^\IN\oplus V^\IN\to(Y^\IN\oplus V^\IN)^\IN$ by
\[h(k,(z_{\langle i,j\rangle})_{\langle i,j\rangle\in\IN}):=(k,(z_{\langle i,j\rangle})_{i\in\IN})_{j\in\IN}\]
for all $(k,(z_{\langle i,j\rangle})_{\langle i,j\rangle\in\IN})\in Y^\IN\oplus V^\IN$.
Then we obtain
\[h(\widehat{f}\oplus\widehat{g})h'(x,u)\In(\widehat{\widehat{f}\oplus\widehat{g}})(x,u)\]
for all $(x,u)\in(X^\IN\times U^\IN)^\IN$. Since $h$ and $h'$ are computable,
this implies that $\widehat{\widehat{f}\oplus\widehat{g}}\leqSW\widehat{f}\oplus\widehat{g}$.
It is clear that $\widehat{f}\oplus\widehat{g}\leqSW\widehat{\widehat{f}\oplus\widehat{g}}$
since parallelization is a closure operator and $\widehat{f\oplus g}\leqSW\widehat{\widehat{f}\oplus\widehat{g}}$
holds since $f\leqSW\widehat{f}$ and $g\leqSW\widehat{g}$ and by monotonicity of sums
according to Proposition~\ref{prop:monotone-sum}
\end{proof}

This result shows that the sum operation on parallelized Weihrauch degree
is well-defined. However, we cannot define this operation using arbitrary
representatives of a parallel Weihrauch degree, we have to use a parallelized
representative.

By $\widehat{\WW}$ we denote the set of parallel Weihrauch
degrees, which is defined as $\WW$ but using parallel Weihrauch reducibility.
As a corollary of our results we obtain that the parallel Weihrauch degrees 
of multi-valued functions form a lattice.

\begin{theorem}[Parallel Weihrauch degrees]
\label{thm:Weihrauch-parallel}
The space $(\widehat{\WW},\leqPW)$ of parallel Weihrauch degrees is a lattice with least element $\botW$
and greatest element $\topW$, with $\oplus$ as the greatest lower bound operation
and with $\times$ as the least upper bound operation.
In particular, $(\widehat{\WW},\oplus)$ and $(\widehat{\WW},\times)$ are idempotent monoids 
with neutral elements $\topW$ and $\botW$, respectively.
\end{theorem}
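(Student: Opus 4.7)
The plan is to assemble the claimed lattice and monoid structure from the algebraic facts already established in Sections~\ref{sec:product-sum} and~\ref{sec:parallelization}; what remains is largely bookkeeping to verify that these facts descend to $\widehat\WW$ and to identify the greatest lower bound operation correctly.

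First I would observe that $\leqPW$ inherits reflexivity and transitivity from $\leqW$ via the defining equivalence $f\leqPW g\iff\widehat f\leqW\widehat g$, so it induces a partial order on $\widehat\WW$. The extremal elements require only a brief check: the identity lies in $\botW$ and its parallelization is still computable, so $\botW$ remains least; analogously, the parallelization of $\emptyW$ still has no realizers, so $\topW$ remains greatest.

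For the least upper bound, Proposition~\ref{prop:least-lower-bound} already states that $f\times g$ is the least upper bound of $f,g$ with respect to $\leqPW$. Well-definedness on parallel degrees follows from Proposition~\ref{prop:monotone} combined with Proposition~\ref{prop:product-parallelization}, while Proposition~\ref{prop:absorb} guarantees that $\times$ is idempotent on $\widehat\WW$. Together with associativity, commutativity, and neutrality of $\botW$ (all inherited from Proposition~\ref{prop:product} via Theorem~\ref{thm:Weihrauch}), this exhibits $(\widehat\WW,\times)$ as an idempotent monoid.

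The delicate point is the greatest lower bound, because Proposition~\ref{prop:sum-parallelization} only gives $\widehat{f\oplus g}\leqSW\widehat f\oplus\widehat g$ rather than a full equivalence. Consequently $\oplus$ on $\widehat\WW$ has to be defined using \emph{parallelized} representatives: the sum of the parallel degrees of $f$ and $g$ is declared to be the parallel degree of $\widehat f\oplus\widehat g$. This is independent of the chosen representatives by Proposition~\ref{prop:monotone-sum}. To see that it is indeed the glb, I would take any common parallel lower bound $h\leqPW f$, $h\leqPW g$, so that $\widehat h\leqW\widehat f$ and $\widehat h\leqW\widehat g$, and apply Proposition~\ref{prop:greatest-lower-bound} in $\WW$ to obtain $\widehat h\leqW\widehat f\oplus\widehat g$; since $\widehat f\oplus\widehat g$ is itself parallelizable by Proposition~\ref{prop:sum-parallelization}, this is exactly $h\leqPW\widehat f\oplus\widehat g$. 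The two trivial bounds $\widehat f\oplus\widehat g\leqW\widehat f,\widehat g$ likewise come from Proposition~\ref{prop:greatest-lower-bound}. The remaining monoid axioms for $(\widehat\WW,\oplus)$---associativity, commutativity, idempotency, and neutrality of $\topW$---transfer from Proposition~\ref{prop:sum} and Theorem~\ref{thm:Weihrauch} applied to the parallelized representatives. The principal obstacle is precisely this representative-dependence of $\oplus$; everything else is a systematic reduction to earlier results.
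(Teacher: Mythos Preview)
Your proposal is correct and follows exactly the route the paper intends: the theorem is stated there without proof, explicitly ``as a corollary of our results'', and the surrounding text singles out precisely the point you flag as delicate, namely that $\oplus$ on $\widehat\WW$ must be computed on parallelized representatives (the remark immediately preceding the theorem). Your assembly of Propositions~\ref{prop:monotone}, \ref{prop:product}, \ref{prop:greatest-lower-bound}, \ref{prop:product-parallelization}, \ref{prop:absorb}, \ref{prop:least-lower-bound}, and \ref{prop:sum-parallelization} is the intended one, and your verification that $\widehat f\oplus\widehat g$ is the greatest lower bound in $\widehat\WW$ is carried out correctly.
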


At the end of this section we mention that one should not have any wrong
expectations on how products and parallelization interact.
The parallelization of a function is not necessarily the supremum
of all its finite products with itself (see Corollary~\ref{cor:LPO-product}).
This also indicates that we can capture significantly finer distinctions
with Weihrauch degrees than with parallelized Weihrauch degrees.

\section{Embedding of Turing degrees and Medvedev degrees}
\label{sec:embedding}

In this section we want to prove that Turing degrees and Medvedev degrees
can be embedded into parallelized Weihrauch degrees quite naturally.
In fact, it is sufficient to embed Medvedev degrees, since the embedding
of Turing degrees is a special case.

We recall that a set $\AA\In\IN^\IN$ is said to be {\it Medvedev reducible} to $\BB\In\IN^\IN$, in symbols
$\AA\leq_{\rm M}\BB$, if there 
exists a computable $F:\In\IN^\IN\to\IN^\IN$ with $\BB\In\dom(F)$ and $F(\BB)\In\AA$.
In fact, Turing reducibility is a special case, since $p\in\IN^\IN$ is said to be
{\em Turing reducible} to $q\in\IN^\IN$, in symbols $p\leqT q$, if $\{p\}\leq_{\rm M}\{q\}$
(see \cite{Rog67}). 

Now we associate to any $q\in\IN^\IN$ the {\em constant function} 
\[c_q:\IN^\IN\to\IN^\IN, p\mapsto q\]
for all $p\in\IN^\IN$. 
%We also denote by $\widehat{n}\in\IN^\IN$ the constant sequence with value $n\in\IN$.
In the next step we associate a multi-valued function to any non-empty $\AA\In\IN^\IN$ by
\[c_\AA:\IN^\IN\mto\IN^\IN,p\mapsto\AA\]
for all $p\in\IN^\IN$. Then $c_\AA$ has a computable realizer if and only
if $\AA$ contains a computable member.
To the empty set $\emptyset\In\IN^\IN$ we associate $c_\emptyset:=\emptyW$, the
special ``multi-valued function'' without realizer.
We note that the function $c_\AA$ is parallelizable, i.e.\ $c_\AA\equivW\widehat{c_\AA}$.
Our main result of this section is now the following theorem.

\begin{theorem}[Embedding of Medvedev degrees]
\label{thm:Medvedev}
Let $\AA,\BB\In\IN^\IN$. Then
\[\AA\leqM\BB\iff c_\AA\leqW c_\BB.\]
\end{theorem}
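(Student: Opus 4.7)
The plan is to prove both directions rather directly by exploiting the fact that $c_\AA$ has trivial input behaviour: the value depends only on the realizer of $c_\BB$ used, not on the input $p$ itself.

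First I would dispose of the degenerate cases $\AA=\emptyset$ or $\BB=\emptyset$. If $\BB=\emptyset$ then $c_\BB=\emptyW$ has no realizers, so $c_\AA\leqW c_\BB$ holds vacuously, and on the Medvedev side $\AA\leqM\emptyset$ is witnessed by the empty function. If $\AA=\emptyset$ but $\BB\neq\emptyset$, then $c_\AA=\emptyW$ has no realizers while $c_\BB$ does, so $c_\AA\nleqW c_\BB$; and the Medvedev reduction $\AA\leqM\BB$ would require $F(\BB)\In\emptyset$, which fails because $\BB\neq\emptyset$. So both relations fail together. Thus I may assume $\AA,\BB\neq\emptyset$ for the main argument.

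For the forward direction, assume $\AA\leqM\BB$ via a computable $F:\In\IN^\IN\to\IN^\IN$ with $\BB\In\dom(F)$ and $F(\BB)\In\AA$. I would set $K:=\id$ and $H:=F\circ\pi_2$, both computable. A realizer of $c_\BB$ is any total $G:\IN^\IN\to\IN^\IN$ with $G(p)\in\BB$ for all $p$. Then
\[H\langle\id,GK\rangle(p)=F(\pi_2\langle p,G(p)\rangle)=F(G(p))\in F(\BB)\In\AA=c_\AA(p),\]
so $H\langle\id,GK\rangle$ realizes $c_\AA$. Hence $c_\AA\leqW c_\BB$.

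For the backward direction, suppose computable $H,K$ witness $c_\AA\leqW c_\BB$. The key trick is that for each $q\in\BB$, the constant function $G_q:\IN^\IN\to\IN^\IN$, $p\mapsto q$, is a (not necessarily computable) realizer of $c_\BB$, and the reduction condition must hold for every such realizer. Pick any fixed computable point $p_0\in\IN^\IN$ (for instance the zero sequence) and define the computable function
\[F:\IN^\IN\to\IN^\IN,\quad q\mapsto H\langle p_0,q\rangle.\]
For arbitrary $q\in\BB$, applying the reduction to $G_q$ gives
\[F(q)=H\langle p_0,q\rangle=H\langle p_0,G_qK(p_0)\rangle\in c_\AA(p_0)=\AA,\]
so $F$ is a computable function with $\BB\In\dom(F)$ and $F(\BB)\In\AA$, witnessing $\AA\leqM\BB$.

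I do not expect a real obstacle; the main point to get right is the use of arbitrary (not merely computable) constant realizers $G_q$ in the backward direction, which is exactly what makes the uniform Weihrauch reduction collapse to a plain Medvedev reduction. The forward direction is just packaging the Medvedev reduction $F$ as the outer computation $H$, ignoring the pass-through input.
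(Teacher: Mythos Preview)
Your proof is correct and essentially identical to the paper's: both use $K=\id$ and $H=F\circ\pi_2$ for the forward direction, and for the backward direction both fix a computable point (the paper uses $\widehat{0}$, you use $p_0$), plug in the constant realizer $G_q=c_q$ for each $q\in\BB$, and read off the Medvedev reduction $F(q)=H\langle p_0,q\rangle$. Your explicit treatment of the empty cases matches what the paper dismisses as ``obvious.''
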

\begin{proof}
Without loss of generality, we assume that $\AA$ and $\BB$ are non-empty,
since the result obviously holds otherwise.
Let us assume that $c_\AA\leqW c_\BB$. Then there exist computable
functions $H,K:\In\IN^\IN\to\IN^\IN$ such that 
$H\langle\id,GK\rangle$ is a realizer of $c_\AA$ for any
realizer $G$ of $c_\BB$. Being a realizer of $c_\AA$ means that
$H\langle\id,GK\rangle(\widehat{0})\in\AA$, where $\widehat{0}$ is the constant zero sequence.
If $p\in\BB$, then $c_p$ is a realizer of $c_\BB$. We define $F:\In\IN^\IN\to\IN^\IN$ by
\[F(p):=H\langle\id,c_pK\rangle(\widehat{0})=H\langle\widehat{0},p\rangle\]
for all $p\in\IN^\IN$.
Then $F$ is computable, and if $p\in\BB$, then $F(p)\in\AA$,
i.e.\ $\BB\In\dom(F)$ and $F(\BB)\In\AA$. Thus, $\AA\leqM\BB$.

Now let us suppose that $\AA\leqM\BB$, i.e.\ there exists a computable function
$F:\In\IN^\IN\to\IN^\IN$ such that $\BB\In\dom(F)$ and $F(\BB)\In\AA$.
We have to prove $c_\AA\leqW c_\BB$, i.e.\ we have to provide computable
$H,K:\In\IN^\IN\to\IN^\IN$ such that $H\langle\id,GK\rangle$
is a realizer of $c_\AA$ for any realizer $G$ of $c_\BB$.
For this purpose we choose $K=\id$ and we define $H:\In\IN^\IN\to\IN^\IN$ by
$H\langle p,q\rangle:=F(q)$ for all $p,q\in\IN^\IN$.
If $G$ is a realizer of $c_\BB$, then $G(p)\in\BB$ for any $p\in\IN^\IN$ and we obtain
\[H\langle p,GK(p)\rangle=FG(p)\in\AA\]
for all $p\in\IN^\IN$, i.e.\ $H\langle\id,GK\rangle$ is a realizer of $c_\AA$.
This proves $c_\AA\leqW c_\BB$.
\end{proof}

It is clear that a corresponding embedding of Turing degrees follows,
i.e. $p\leqT q\iff c_p\leqW c_q$. 
Since Turing reducibility is mostly considered
for subsets $A\In\IN$, we formulate a corresponding corollary for completeness.

\begin{corollary}[Embedding of Turing degrees]
\label{cor:Turing}
Let $A,B\In\IN$. Then 
\[A\leqT B\iff c_{\graph(A)}\leqW c_{\graph(B)}.\]
\end{corollary}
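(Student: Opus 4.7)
The plan is to derive this corollary as an immediate specialization of Theorem~\ref{thm:Medvedev} via the definition of Turing reducibility recalled at the beginning of Section~\ref{sec:embedding}. First I would fix notation: for $A\In\IN$ the symbol $\graph(A)\In\IN^\IN$ denotes the singleton $\{\chi_A\}$ consisting of the characteristic function of $A$, so that $c_{\graph(A)}:\IN^\IN\mto\IN^\IN$ is the constant multi-valued function with value $\{\chi_A\}$, as defined just before Theorem~\ref{thm:Medvedev}.

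Next I would unfold the definition of $\leqT$ on subsets of $\IN$: by the standard identification of $A\In\IN$ with $\chi_A\in\IN^\IN$ and by the definition given in the excerpt (``$p\leqT q$ if $\{p\}\leqM\{q\}$''), we obtain
\[
A\leqT B\iff\chi_A\leqT\chi_B\iff\{\chi_A\}\leqM\{\chi_B\}\iff\graph(A)\leqM\graph(B).
\]
Then I would apply Theorem~\ref{thm:Medvedev} to the non-empty sets $\AA:=\graph(A)$ and $\BB:=\graph(B)$, which yields
\[
\graph(A)\leqM\graph(B)\iff c_{\graph(A)}\leqW c_{\graph(B)}.
\]
Chaining the two equivalences gives the claim.

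There is no genuine obstacle; the only subtle point worth pointing out is the notational convention identifying $\graph(A)$ with the singleton $\{\chi_A\}\In\IN^\IN$, which makes $c_{\graph(A)}$ well-defined in the sense of the preceding paragraph and reduces the corollary to a one-line consequence of Theorem~\ref{thm:Medvedev}.
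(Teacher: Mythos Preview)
Your proposal is correct and matches the paper's approach: the paper simply remarks that ``a corresponding embedding of Turing degrees follows, i.e.\ $p\leqT q\iff c_p\leqW c_q$'' immediately after Theorem~\ref{thm:Medvedev} and then states the corollary without further proof. The only minor notational discrepancy is that the paper appears to intend $\graph(A)$ as the element $\chi_A\in\IN^\IN$ (so that $c_{\graph(A)}=c_{\chi_A}$ is the single-valued constant function), rather than the singleton set $\{\chi_A\}\In\IN^\IN$ as you write; but since $c_q$ and $c_{\{q\}}$ have identical realizers, this does not affect the argument.
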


%We can define for each $A\In\IN$
%\[\cf_{A}:\IN^\IN\to\IN^\IN,p\mapsto\left\{\begin{array}{ll}
%  \widehat{1} & \mbox{if $p(0)\in A$}\\
%  \widehat{0} & \mbox{otherwise}
%\end{array}\right..\]
%Then $\cf_A$ is the natural lifting of the ordinary characteristic function
%$\chi_A:\IN\to\IN$ of $A$ into Baire space, i.e.\ $F_{\chi_A}=\cf_A$.
%Now we can formulate our conclusion for Turing reducibility as follows.
%
%\begin{corollary}[Embedding of Turing degrees]
%\label{cor:Turing}
%Let $A,B\In\IN$. Then we obtain
%$A\leqT B\iff\cf_A\leqPW\cf_B$.
%\end{corollary}
The reader should notice that for the embedding of Medvedev degrees 
we have only used a certain fraction of the parallel Weihrauch lattice $\widehat{\WW}$,
namely only the sublattice for total and continuous multi-valued functions $f:\IN^\IN\mto\IN^\IN$ (and
$\emptyW$). It is easy to see that product $\times$ and direct sum $\oplus$ preserve
totality and continuity and in fact products, sequences and sums of Baire space $\IN^\IN$ can easily 
be identified with $\IN^\IN$. Similarly, we obtain that for the embedding of
Turing degrees we only need single-valued total and continuous functions $f:\IN^\IN\to\IN^\IN$.

Now we want to show that our embedding of the Medvedev lattice preserves
also greatest lower and least upper bounds. For sets $\AA,\BB\In\IN^\IN$ 
one usually defines 
\[\AA\oplus\BB:=\{\langle p,q\rangle:p\in\AA\mbox{ and }q\in\BB\}
\mbox{ and } 
\AA\otimes\BB:=0\AA\cup1\BB.
\]
The reader should note that product and sum are just swapped compared
to the way we use these operations. Now one can easily prove the following result.

\begin{proposition}
Let $\AA,\BB\In\IN^\IN$. Then
\[c_{\AA\oplus\BB}\equivSW c_\AA\times c_\BB\mbox{ and }c_{\AA\otimes\BB}\equivSW c_\AA\oplus c_\BB.\]
\end{proposition}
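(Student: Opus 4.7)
My plan is to verify each of the four strong Weihrauch reductions directly by exhibiting explicit computable pairs $H,K$ with the equation $F=HGK$. The guiding observation is that names essentially coincide: an element $\langle p,q\rangle\in\AA\oplus\BB\In\IN^\IN$, taken as a point of $\IN^\IN$ with $\delta=\id$, has as its $\id$-name the very same string $\langle p,q\rangle$ that is a product-representation name of $(p,q)\in\AA\times\BB$. Analogously, names of elements of $\AA\otimes\BB=0\AA\cup 1\BB$ agree with sum-representation names of elements of $\{0\}\times\AA\cup\{1\}\times\BB$ up to the harmless discrepancy that $\delta_Y\sqcup\delta_V$ encodes the ``right'' component by any first digit $n\neq 0$, whereas $\AA\otimes\BB$ only uses $n=1$.

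For the two ``easy'' directions $c_\AA\times c_\BB\leqSW c_{\AA\oplus\BB}$ and $c_\AA\oplus c_\BB\leqSW c_{\AA\otimes\BB}$ I take $K:=\pi_1$ and $H:=\id$. If $F$ realizes $c_{\AA\oplus\BB}$ (respectively $c_{\AA\otimes\BB}$), then for any input $\langle p,q\rangle$ we have $HFK\langle p,q\rangle=F(p)$; since $F(p)\in\AA\oplus\BB$ (respectively $\AA\otimes\BB$), this string is by the above observation exactly a name of an element of $\AA\times\BB$ (respectively of $\{0\}\times\AA\cup\{1\}\times\BB$), so $HFK$ realizes the left-hand side.

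For the reverse directions $c_{\AA\oplus\BB}\leqSW c_\AA\times c_\BB$ and $c_{\AA\otimes\BB}\leqSW c_\AA\oplus c_\BB$ I set $K(p):=\langle p,p\rangle$. If $G$ realizes $c_\AA\times c_\BB$, then $GK(p)=\langle a,b\rangle$ with $a\in\AA$ and $b\in\BB$, which is already a $\id$-name of an element of $\AA\oplus\BB$, so $H:=\id$ suffices. If instead $G$ realizes $c_\AA\oplus c_\BB$, then $GK(p)=nr$ with either $n=0,\,r\in\AA$ or $n\neq 0,\,r\in\BB$; the only technical point is that $n$ may be any positive integer, so I define the computable map $H$ by $H(0r):=0r$ and $H(nr):=1r$ for $n\neq 0$, which guarantees $HGK(p)\in 0\AA\cup 1\BB=\AA\otimes\BB$.

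There is no real obstacle here; the argument is essentially bookkeeping about the two representations. The only subtle point worth flagging explicitly is the normalization of the first digit in the sum case, which is precisely what forces the second equivalence to be verified with a nontrivial $H$ (rather than the identity) and explains why strong equivalence, not merely ordinary Weihrauch equivalence, can be asserted.
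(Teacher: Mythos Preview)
Your proof is correct and follows essentially the same idea as the paper's: the outputs of the two sides coincide up to the tupling bijection (and, in the sum case, up to normalizing the leading digit), while the inputs are irrelevant for constant multi-valued maps. The paper's argument is terser---it merely writes out $c_{\langle p,q\rangle}(r)=\langle p,q\rangle$ versus $(c_p\times c_q)(r,s)=(p,q)$ and appeals to the tupling function and its inverse, then says ``one can easily see'' for the $\otimes/\oplus$ case---whereas you spell out explicit $H,K$ for each of the four reductions and correctly isolate the one nontrivial point (the normalization $H(nr):=1r$ for $n\neq 0$), which the paper leaves implicit.
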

\begin{proof}
We just note that 
\[c_{\langle p,q\rangle}(r)=\langle p,q\rangle\mbox{ and }(c_p\times c_q)(r,s)=(p,q)\]
for all $r,s\in\IN^\IN$. Thus $c_{\langle p,q\rangle}\equivSW c_p\times c_q$ follows,
using the computable tupling function $\pi:\IN^\IN\times\IN^\IN\to\IN^\IN$ with 
$\pi(p,q):=\langle p,q\rangle$ and its inverse
and analogously one obtains the desired result $c_{\AA\oplus\BB}\equivSW c_\AA\times c_\BB$.

For the other equivalence, we note that
\[c_{\AA\otimes\BB}(r)=\AA\otimes\BB=0\AA\cup 1\BB\mbox{ and }(c_\AA\oplus c_\BB)(r,s)=(\{0\}\times\AA\cup\{1\}\times\BB)\]
for all $r,s\in\IN^\IN$.
One can easily see that this implies $c_{\AA\otimes\BB}\equivSW c_\AA\oplus c_\BB$.
\end{proof}

We mention that this result implies that our embedding of the Medvedev lattice
preserves least upper bounds and greatest lower bounds.

\begin{corollary}[Embedding of the Medvedev lattice]
The Medvedev lattice is embeddable into the parallel Weihrauch lattice (restricted
to total and continuous multi-valued functions on Baire space and $\emptyW$) with an embedding
that preserves least upper bounds and greatest lower bounds.
\end{corollary}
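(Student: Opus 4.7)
The plan is to define the embedding $\iota:\mathcal{M}\to\widehat{\WW}$ on the Medvedev lattice by sending a mass problem $\AA\In\IN^\IN$ to the parallel Weihrauch degree of the constant multi-valued map $c_\AA$, with the empty set going to $\topW$. The first thing to verify is that $\iota$ is well-defined and an order embedding: by Theorem~\ref{thm:Medvedev} we have $\AA\leqM\BB\iff c_\AA\leqW c_\BB$, and since $c_\AA$ is parallelizable (noted right after its definition) this equivalence is the same as $c_\AA\leqPW c_\BB$. So $\iota$ is both well-defined on Medvedev equivalence classes and reflects as well as preserves the order.

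Next I would verify preservation of joins and meets by combining the immediately preceding proposition with Theorem~\ref{thm:Weihrauch-parallel}. For joins, the proposition gives $c_{\AA\oplus\BB}\equivSW c_\AA\times c_\BB$, hence also parallel Weihrauch equivalent, and Theorem~\ref{thm:Weihrauch-parallel} identifies $\times$ as the least upper bound operation in $\widehat{\WW}$; recalling that $\oplus$ is the join in the Medvedev lattice, this yields $\iota(\AA\oplus\BB)=\iota(\AA)\vee\iota(\BB)$. Dually, for meets, the proposition gives $c_{\AA\otimes\BB}\equivSW c_\AA\oplus c_\BB$, and Theorem~\ref{thm:Weihrauch-parallel} identifies $\oplus$ as the greatest lower bound in $\widehat{\WW}$, so $\iota(\AA\otimes\BB)=\iota(\AA)\wedge\iota(\BB)$.

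Finally I would check that $\iota$ lands inside the promised sublattice. Each $c_\AA$ is by construction a total multi-valued function with domain $\IN^\IN$ and values in $\IN^\IN$, and it is continuous since every realizer can be chosen to be a constant selection from $\AA$, which is trivially continuous. The products $c_\AA\times c_\BB$ and direct sums $c_\AA\oplus c_\BB$ used above are again total continuous multi-valued functions between finite powers of $\IN^\IN$, which may be identified with $\IN^\IN$ via the computable tupling and untupling homeomorphisms; thus the whole image of $\iota$ lies in the sublattice of total continuous multi-valued functions on Baire space, augmented by $\emptyW$ for the empty mass problem.

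None of the steps presents a real obstacle, since every ingredient is already in place; the only mild subtlety is being careful that the join and meet formulas of the Medvedev lattice are matched with the correct Weihrauch operations (the swap between $\oplus,\otimes$ on the Medvedev side and $\times,\oplus$ on the Weihrauch side), but this is precisely what the preceding proposition encodes.
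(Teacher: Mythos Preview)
Your proposal is correct and follows exactly the route the paper intends: the corollary is stated without its own proof because it is assembled from Theorem~\ref{thm:Medvedev}, the parallelizability of $c_\AA$, the preceding proposition identifying $c_{\AA\oplus\BB}\equivSW c_\AA\times c_\BB$ and $c_{\AA\otimes\BB}\equivSW c_\AA\oplus c_\BB$, and Theorem~\ref{thm:Weihrauch-parallel}. Your attention to the swap between the Medvedev operations and the Weihrauch operations, and to the fact that the $c_\AA$ are already parallelizable (so they are legitimate representatives when computing $\oplus$ in $\widehat{\WW}$), is exactly the care the paper leaves to the reader.
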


We also formulate the analogous result for Turing degrees.

\begin{corollary}[Embedding of the Turing upper semi-lattice]
The upper\linebreak semi-lattice of Turing degrees is embeddable into the parallel
Weihrauch lattice (restricted to total and continuous single-valued functions on Baire space)
with an embedding that preserves least upper bounds.
\end{corollary}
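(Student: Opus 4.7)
The plan is to use the embedding $[p]_{\rm T}\mapsto[c_p]_{\widehat{\rm W}}$, where $c_p:\IN^\IN\to\IN^\IN$ is the constant function with value $p$, and to derive everything we need by specializing the Medvedev case to singletons. Since each $c_p$ is by definition a single-valued, total, and (being constant) continuous function on Baire space, the image automatically lies in the stated sublattice, so no separate range check is needed.

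First I would verify that the map is a well-defined order embedding. For singletons $\{p\},\{q\}\In\IN^\IN$ one has $\{p\}\leqM\{q\}\iff p\leqT q$, so Theorem~\ref{thm:Medvedev} specializes to $p\leqT q\iff c_{\{p\}}\leqW c_{\{q\}}$. The multi-valued constant $c_{\{p\}}:\IN^\IN\mto\IN^\IN$ has as realizers precisely those functions $F:\IN^\IN\to\IN^\IN$ that output $p$ on every input, which coincides with the realizer set of the single-valued $c_p$, so $c_{\{p\}}\equivW c_p$ (and likewise for $q$). Hence $p\leqT q\iff c_p\leqW c_q$, and since each $c_p$ is parallelizable (remarked just before Theorem~\ref{thm:Medvedev}), this is in turn equivalent to $c_p\leqPW c_q$. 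Thus the assignment factors through Turing degrees and yields an order embedding into $\widehat{\WW}$.

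Second I would verify preservation of least upper bounds. The Turing join of $[p]_{\rm T}$ and $[q]_{\rm T}$ is represented by $\langle p,q\rangle$, which in Medvedev terms corresponds to $\{p\}\oplus\{q\}=\{\langle p,q\rangle\}$. The proposition preceding the final corollary gives $c_{\{p\}\oplus\{q\}}\equivSW c_{\{p\}}\times c_{\{q\}}$, which under the identifications above reads $c_{\langle p,q\rangle}\equivSW c_p\times c_q$. By Proposition~\ref{prop:least-lower-bound}, $c_p\times c_q$ is the least upper bound of $c_p$ and $c_q$ in $\widehat{\WW}$, so the embedding sends the Turing join to the parallel Weihrauch join. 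There is no substantive obstacle: the only delicate point is the identification of $c_{\{p\}}$ with $c_p$ on the realizer level, which is immediate from the definitions, after which the corollary is a direct specialization of the Medvedev embedding and the product-sum proposition.
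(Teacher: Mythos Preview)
Your proposal is correct and follows exactly the route the paper intends: the corollary is stated without proof as the specialization of the Medvedev embedding to singletons, using $c_{\{p\}\oplus\{q\}}\equivSW c_{\{p\}}\times c_{\{q\}}$ together with Proposition~\ref{prop:least-lower-bound} for preservation of joins. Your explicit identification of $c_{\{p\}}$ with $c_p$ on the realizer level and the parallelizability remark are precisely the details the paper leaves implicit.
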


Using these results some structural properties of the parallel Weihrauch lattice
can be transferred from the Turing uppers semi-lattice and the Medvedev lattice.
This observation also gives raise to plenty of further research questions.

\section{Omniscience principles}
\label{sec:omniscience}

In this section we will study the omniscience principles
that we mentioned already in the introduction. We will consider
them in form of the following maps.

\begin{definition}[Omniscience principles]\rm
We define:
\begin{itemize}
\item $\LPO:\IN^\IN\to\IN,\hspace{4.9mm}\LPO(p)=\left\{\begin{array}{ll}
       0 & \mbox{if $(\exists n\in\IN)\;p(n)=0$}\\
       1 & \mbox{otherwise}
       \end{array}\right.$,
\item $\LLPO:\In\IN^\IN\mto\IN,\LLPO(p)\ni\left\{\begin{array}{ll}
       0 & \mbox{if $(\forall n\in\IN)\;p(2n)=0$}\\
       1 & \mbox{if $(\forall n\in\IN)\;p(2n+1)=0$}
       \end{array}\right.$,
\end{itemize}
where $\dom(\LLPO):=\{p\in\IN^\IN:p(k)\not=0$ for at most one $k\}$.
\end{definition}

One should notice that the definition of $\LLPO$ implies that $\LLPO(0^\IN)=\{0,1\}$.
The natural numbers $\IN$ can be represented
by $\delta_\IN(p):=p(0)$, but for simplicity of notation we will usually work directly with $\IN$.

%It is interesting to note that $\LPO$ is strongly equivalent to the identity $\id:\IS\to\{0,1\}$ where $\IS=\{0,1\}$ 
%is equipped with the Sierpi{\'n}ski topology $\{\{0\},\{0,1\}\}$ and the canonical admissible representation
%\[\delta_\IS(p):=\left\{\begin{array}{ll}
%  0 & \mbox{if $(\exists n)\;p(n)=0$}\\
%  1 & \mbox{otherwise}
%\end{array}\right.\]
%and the target space $\{0,1\}$ is equipped with the ordinary discrete topology, 
%but we will not use this observation here.
%We rather show that the paralellization of $\LPO$ is 
%equivalent (even identical) to $\C$.

The two principles $\LPO$ and $\LLPO$ have already been studied in computable analysis \cite{Wei92a,Wei92c,Ste89,Myl92}.
For instance, it is well-known that $\LPO$ is reducible to any other discontinuous single-valued function
on Baire space (see Lemma~8.2.6 in \cite{Wei00}). For completeness we include the proof.

\begin{proposition}
\label{prop:discontinuous}
Let $F:\In\IN^\IN\to\IN^\IN$ be discontinuous. Then we obtain $\LPO\leqSW F$, relatively to some oracle.
\end{proposition}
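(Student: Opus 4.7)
The plan is to extract from the discontinuity of $F$ a single output coordinate $n$ at which we can detect whether the input is or is not a small perturbation of a fixed point $q$, and then to encode the semidecision ``$p=0^\IN$ vs.\ $p$ has a non-zero entry'' into such a perturbation.

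First, I would unpack discontinuity of $F$ at some point $q\in\dom(F)$. Because basic open sets in Baire space are determined by finite prefixes, there is some $m\in\IN$ such that for every $k\in\IN$ one can choose $q_k\in\dom(F)$ with $q_k\upharpoonright k=q\upharpoonright k$ and $F(q_k)\upharpoonright m\neq F(q)\upharpoonright m$. For each such $k$ let $n_k<m$ be some coordinate where $F(q_k)$ and $F(q)$ differ. Since the possible values of $n_k$ lie in the finite set $\{0,\dots,m-1\}$, a pigeonhole argument yields a single $n<m$ and an infinite sequence of indices along which $F(q_k)(n)\neq F(q)(n)$. By thinning, I can therefore assume that for \emph{every} $k\in\IN$ there is $r_k\in\dom(F)$ with $r_k\upharpoonright k=q\upharpoonright k$ and $F(r_k)(n)\neq F(q)(n)$. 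Set $a:=F(q)(n)$.

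Next I would define the input modification $K$. Given $p\in\IN^\IN$, the sequence $K(p)$ outputs symbol $m$ as follows: if $p(0)=\ldots=p(m)=0$, output $q(m)$; otherwise let $k\leq m$ be minimal with $p(k)\neq 0$ and output $r_k(m)$. Consistency is automatic, since $r_k$ agrees with $q$ on the first $k$ positions. Consequently $K(0^\IN)=q$ and, whenever $p$ has first non-zero entry at position $k$, $K(p)=r_k$. Hence $F(K(p))(n)=a$ iff $p=0^\IN$, i.e.\ iff $\LPO(p)=1$. The output modification $H$ simply reads the $n$-th coordinate of its input and outputs $1$ as the first symbol if that coordinate equals $a$ and $0$ otherwise; this is a realizer of the natural number answer with respect to the representation $\delta_\IN(s):=s(0)$. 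Composing, $HFK$ is a realizer of $\LPO$, so $\LPO\leqSW F$.

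Finally, neither $K$ nor $H$ is in general computable, since they depend on the parameters $q$, $(r_k)_{k\in\IN}$, $n$ and $a$ obtained above. However, the sequence $\langle q, r_0,r_1,r_2,\ldots\rangle$ together with the natural numbers $n$ and $a$ can be packaged into a single oracle $o\in\IN^\IN$, relative to which both $H$ and $K$ become computable by the explicit descriptions above. This yields the desired strong Weihrauch reduction $\LPO\leqSW F$ with respect to the oracle $o$. The only genuine subtlety is the pigeonhole step that isolates a \emph{single} output coordinate $n$ witnessing the discontinuity; once that is done the rest of the construction is entirely uniform.
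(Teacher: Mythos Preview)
Your argument is correct and follows essentially the same route as the paper: pick a point $q$ of discontinuity, extract a sequence $r_k\to q$ whose $F$--images stay outside a fixed basic neighbourhood of $F(q)$, and let $K$ send $p$ to $q$ or to some $r_k$ depending on a simple search through $p$.

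Two small remarks. First, the pigeonhole step isolating a \emph{single} coordinate $n$ is not needed: the paper simply keeps the whole prefix $w:=F(q)\upharpoonright m$ and lets $H$ be the (computable) characteristic function of the ball $w\IN^\IN$, which already separates $F(q)$ from every $F(r_k)$. Second, there is a convention slip: under the paper's definition $\LPO(p)=1$ means that $p$ contains \emph{no} zero, not that $p=0^\IN$. Your $H\circ F\circ K$ therefore realizes the (strongly equivalent) problem ``$p=0^\IN$?'' rather than $\LPO$ itself; either precompose with the computable flip $p\mapsto(1\dmin p(n))_n$ or, as the paper does, have $K$ search for the first \emph{zero} of $p$ instead of the first non-zero.
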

\begin{proof}
Since $\dom(F)$ is a subspace of a metric space, it is first countable and hence sequential. 
Thus, $F$ is continuous if and only if it sequentially continuous.
Let $q$ be a point of discontinuity of $F$. Then there is a sequence $(q_n)_{n\in\IN}$
in $\dom(F)$ that converges to $q$, but such that $(F(q_n))_{n\in\IN}$ does not converge
to $F(q)$. Without loss of generality, we can even assume that there is a $k$ such that
\[(\forall n)\;F(q_n)[k]\not=F(q)[k],\]
since we can select a suitable subsequence otherwise.
We consider $w:=F(q)[k]$ and the characteristic function $\chi_{w\IN^\IN}:\IN^\IN\to\IN$
of the ball $w\IN^\IN$.
Now we define a function $K:\IN^\IN\to\IN^\IN$ by
\[K(p)=\left\{\begin{array}{ll}
q_n & \mbox{if $n$ is minimal with $p(n)=0$}\\
q   & \mbox{if $p$ contains no $0$}
\end{array}\right..\]
Then $K$ is continuous and we obtain
\begin{eqnarray*}
\chi_{w\IN^\IN}FK(p)=0
&\iff& FK(p)\not\in w\IN^\IN\\
&\iff& FK(p)[k]\not=w\\
&\iff& K(p)\not=q\\
&\iff& (\exists n)\;p(n)=0\\
&\iff& \LPO(p)=0.
\end{eqnarray*}
Thus $\LPO(p)=\chi_{w\IN^\IN}FK(p)$
for all $p\in\IN^\IN$. Now $\chi_{w\IN^\IN}$ is computable and $K$ is continuous,
hence they are both computable with respect to some oracle. 
This shows $\LPO\leqSW F$, relatively to some oracle.
\end{proof}

While $\LPO$ is the ``simplest'' single-valued discontinuous function, 
its parallelization $\widehat{\LPO}$ is at the other end of the spectrum,
it is complete among all $\SO{2}$--measurable functions with respect to the
Borel hierarchy. 
For simplicity, we want to consider $\widehat{\LPO}$ as map of type $\IN^\IN\to\IN^\IN$
instead of type $(\IN^\IN)^\IN\to\IN^\IN$. That is, whenever we write $\widehat{\LPO}$
in the following, we actually mean $\widehat{\LPO}\circ\pi$ with
the tupling function $\pi:(\IN^\IN)^\IN\to\IN^\IN,\langle p_0,p_1,p_2,...\rangle\mapsto(p_0,p_1,p_2,...)$.
Since $\pi$ and $\pi^{-1}$ are computable, it is clear that $\widehat{\LPO}\equivSW\widehat{\LPO}\circ\pi$.
An analogous remark holds true for $\LLPO$.
We use the following map $\C:\IN^\IN\to\IN^\IN$, which is defined by
\[\C(p)(n):=\left\{\begin{array}{ll}
  0 & \mbox{if $(\exists k)\;p\langle n,k\rangle=0$}\\
  1 & \mbox{otherwise}
\end{array}\right.
\]
for all $p\in\IN^\IN$ and $n\in\IN$. This map has been studied already in \cite{Ste89,Myl92,Bra99,Bra05}
and it is known that it is $\SO{2}$--complete (see below).

\begin{lemma}
\label{lem:LPO-C}
$\widehat{\LPO}=\C$.
\end{lemma}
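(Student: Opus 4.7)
The plan is to verify the equality directly by unwinding the definitions on both sides; no real content beyond bookkeeping with the tupling conventions is required. First I would fix the convention, already stated just above the lemma, that $\widehat{\LPO}$ is tacitly composed with the standard tupling $\pi\colon(\IN^\IN)^\IN\to\IN^\IN$, so that for $p\in\IN^\IN$ we set $p_n\in\IN^\IN$ by $p_n(k):=p\langle n,k\rangle$ and have $\pi^{-1}(p)=(p_n)_{n\in\IN}$. Likewise the output is tupled componentwise, so $\widehat{\LPO}(p)$ is the sequence whose $n$-th component is $\LPO(p_n)\in\IN$.

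Next I would simply compute, for an arbitrary $p\in\IN^\IN$ and $n\in\IN$,
\[
\widehat{\LPO}(p)(n)=\LPO(p_n)=\begin{cases} 0 & \text{if }(\exists k)\;p_n(k)=0,\\ 1 & \text{otherwise,}\end{cases}
\]
and then rewrite $p_n(k)=p\langle n,k\rangle$ to match, term for term, the defining clauses of $\C(p)(n)$. This yields $\widehat{\LPO}(p)(n)=\C(p)(n)$ for every $n$, hence $\widehat{\LPO}(p)=\C(p)$ for every $p$, which is the equality of the two maps as total single-valued functions $\IN^\IN\to\IN^\IN$.

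There is no real obstacle here: the only thing to be careful about is that the identification of $(\IN^\IN)^\IN$ with $\IN^\IN$ via $\pi$ on the input side and via the analogous tupling on the output side is precisely the identification under which the standard paralellization operator becomes the $\C$ of Stein--Myl.\ \cite{Ste89,Myl92,Bra99,Bra05}. Once the conventions are stated, the lemma reduces to inspection of the defining clauses. (Since the identification is computable with computable inverse in both directions, one could equally well phrase the result as $\widehat{\LPO}\equivSW\C$, but with the conventions fixed as in the paper the two maps are literally identical.)
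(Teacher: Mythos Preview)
Your proposal is correct and essentially identical to the paper's proof: both simply unwind the definition of $\widehat{\LPO}$ (after the tupling convention is fixed) and match the resulting case distinction against the defining clauses of $\C$. The paper presents exactly the same chain of equalities $\widehat{\LPO}\langle p_0,p_1,\ldots\rangle(n)=\LPO(p_n)=\C(p)(n)$, so there is nothing to add.
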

\begin{proof}
The claim follows directly from
\begin{eqnarray*}
\widehat{\LPO}\langle p_0,p_1,p_2,...\rangle(n)
&=& \left\{\begin{array}{ll}
     0 & \mbox{if $(\exists k)\;p_n(k)=0$}\\
     1 & \mbox{otherwise}
    \end{array}\right.\\
&=& \left\{\begin{array}{ll}
    0 & \mbox{if $(\exists k)\;p\langle n,k\rangle=0$}\\
    1 & \mbox{otherwise}
    \end{array}\right.\\
&=& \C(p)(n)
\end{eqnarray*}
for $p=\langle p_0,p_1,p_2,...\rangle$.
\end{proof}

The following result follows from Theorem~7.6 in \cite{Bra05}. The definitions of $\SO{2}$--computability
and $\SO{2}$--measurability can also be found in \cite{Bra05}. We assume that computable metric spaces
are represented with their Cauchy representations.

\begin{corollary}[Completeness]
\label{cor:completeness}
Let $X,Y$ be computable metric spaces and let $k\in\IN$.
For any function $f:X\to Y$ we obtain:
\begin{enumerate}
\item $f$ is $\SO{2}$--measurable $\iff f\leqW\widehat{\LPO}$ with respect to some oracle,
\item $f$ is $\SO{2}$--computable $\iff f\leqW\widehat{\LPO}$.
\end{enumerate}
\end{corollary}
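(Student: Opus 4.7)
The plan is to combine Lemma~\ref{lem:LPO-C}, which gives the identity $\widehat{\LPO}=\C$, with the known completeness result for $\C$ cited as Theorem~7.6 in \cite{Bra05}. Since $\widehat{\LPO}$ and $\C$ are (strongly) Weihrauch equivalent, in fact literally the same function, the two conditions $f\leqW\widehat{\LPO}$ and $f\leqW\C$ coincide, and the corollary is simply a transcription of the existing characterization of $\SO{2}$--classified functions via reducibility to $\C$.

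For part~(2), I would cite Theorem~7.6 of \cite{Bra05} directly: for functions $f\colon X\to Y$ between computable metric spaces one has $f$ is $\SO{2}$--computable iff $f\leqW\C$. Substituting $\widehat{\LPO}$ for $\C$ by Lemma~\ref{lem:LPO-C} yields the claim. The non-trivial content packed into that theorem is the forward direction: given a name $p$ of $x\in X$, one must read off from $p$ a $\widehat{\LPO}$-query $K(p)$ whose componentwise answer $\C(K(p))$ is enough to let a computable post-processor $H\langle\id,\cdot\rangle$ produce a Cauchy name of $f(x)$. This is exactly the translation of an effectively $\SO{2}$--measurable description of $f$ (membership in effectively countable unions of $\PO{1}$ sets) into countably many parallel halting-type queries.

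For part~(1), I would proceed by relativization, as the paper itself advertises after Corollary~\ref{cor:least}. By definition, $f$ is $\SO{2}$--measurable iff it is $\SO{2}$--computable with respect to some oracle, and ``$f\leqW\widehat{\LPO}$ with respect to some oracle'' means the existence of continuous reduction functions $H,K$, which is the same as $f\leqW\widehat{\LPO}$ computed relative to a suitable oracle. Choosing an oracle that simultaneously witnesses the $\SO{2}$--classification of $f$ and the continuity of the putative reduction, one applies~(2) in the relativized world and obtains~(1). The only step that requires care is to ensure that the relativization of Theorem~7.6 holds uniformly in the oracle, but this is routine since the proof of the effective case only uses computable operations on names and hence relativizes word by word; no genuinely new obstacle appears beyond what is already handled in~(2).
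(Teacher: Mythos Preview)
Your proposal is correct and matches the paper's own approach: the paper does not give a separate proof but simply states that the result follows from Theorem~7.6 in \cite{Bra05}, combined with the immediately preceding Lemma~\ref{lem:LPO-C} establishing $\widehat{\LPO}=\C$. Your additional remarks on relativization for part~(1) are in the spirit of what the paper intends but leaves implicit.
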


This result can be generalized to higher classes of $\SO{k}$, see \cite{Bra05}.
The $\SO{2}$--computable maps are also called {\em limit computable}.
In the next section we will see that the parallelization of $\LLPO$ 
also corresponds to a very nice class of effective operations.
Here we continue to formulate some further basic observation 
about $\LPO$ and $\LLPO$.

\begin{proposition}
\label{prop:LLPO-cylinder}
The operations $\widehat{\LPO}$ and $\widehat{\LLPO}$ are cylinders.
\end{proposition}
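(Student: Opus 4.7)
The direction $\widehat{\LPO} \leqSW \id\times\widehat{\LPO}$ is immediate (and analogous for $\widehat{\LLPO}$): with $K(p) := \langle 0^\IN, p\rangle$ and $H := \pi_2$, for any realizer $G$ of $\id\times\widehat{\LPO}$ the composition $HGK$ realizes $\widehat{\LPO}$. So the content lies in the reverse reductions $\id\times\widehat{\LPO}\leqSW\widehat{\LPO}$ and $\id\times\widehat{\LLPO}\leqSW\widehat{\LLPO}$. The plan is to exploit ``spare'' parallel slots in the argument of the parallelized oracle to encode the identity-input $p$, so that a single call returns enough information to recover both $p$ and the desired oracle output.

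Fix a computable bijection $\sigma:\IN^\IN\to\{0,1\}^\IN$ with computable inverse (e.g.\ via unary encoding with $0$ as separator). Given $\langle p,q\rangle$ with $q = \langle q_0,q_1,q_2,\ldots\rangle$, I would define
\[K\langle p,q\rangle := \langle q_0, r_0, q_1, r_1, q_2, r_2, \ldots\rangle,\]
where $r_i\in\IN^\IN$ is an encoding slot for the $i$-th bit of $\sigma(p)$. For $\widehat{\LPO}$, take $r_i$ to be the constant sequence with value $\sigma(p)(i)$; then $\LPO(r_i)=\sigma(p)(i)$. For any realizer $F$ of $\widehat{\LPO}$ we obtain $F(K\langle p,q\rangle) = \langle \LPO(q_0), \sigma(p)(0), \LPO(q_1), \sigma(p)(1),\ldots\rangle$. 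Now define $H$ to read the odd-position entries into a sequence $s\in\{0,1\}^\IN$, apply $\sigma^{-1}$ to recover $p$, collect the even-position entries into a name $\widehat{\LPO}(q)$, and output the pair $\langle p,\widehat{\LPO}(q)\rangle$. Since $K$ and $H$ are computable, $HFK$ realizes $\id\times\widehat{\LPO}$, giving the reduction.

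For $\widehat{\LLPO}$ the same construction works, with only the choice of the encoding slots $r_i$ adjusted so that the multi-valued output is forced to the intended bit. If $\sigma(p)(i)=0$, let $r_i$ be the sequence with $r_i(1)=1$ and $r_i(k)=0$ otherwise: all even-indexed entries vanish, while the nonzero entry at an odd index rules out the value $1$, so $\LLPO(r_i)=\{0\}$. If $\sigma(p)(i)=1$, let $r_i(0)=1$ and $r_i(k)=0$ otherwise, which symmetrically forces $\LLPO(r_i)=\{1\}$. Each such $r_i$ has at most one nonzero entry and so lies in $\dom(\LLPO)$, so $K\langle p,q\rangle$ lies in $\dom(\widehat{\LLPO})$ whenever $\langle p,q\rangle\in\dom(\id\times\widehat{\LLPO})$. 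For \emph{every} realizer $F$ of $\widehat{\LLPO}$, the odd-position entries of $F(K\langle p,q\rangle)$ are determined (they equal $\sigma(p)$), while the even-position entries form an admissible element of $\widehat{\LLPO}(q)$; the extractor $H$ is defined exactly as before.

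No serious obstacle arises: the only point of care is choosing the encoding slots $r_i$ so that the multi-valuedness of $\LLPO$ collapses there to a singleton, which is precisely what the two one-nonzero-entry constructions guarantee.
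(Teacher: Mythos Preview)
Your argument is correct in substance but contains one small misstatement: there is no computable bijection $\sigma:\IN^\IN\to\{0,1\}^\IN$ with computable inverse, since such a map would be a homeomorphism and $\{0,1\}^\IN$ is compact while $\IN^\IN$ is not. Fortunately you only use that $\sigma$ is a computable injection with a computable left inverse on its range, and your unary-with-separator encoding provides exactly that; the proof goes through once you replace ``bijection'' by ``embedding''.

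The paper organizes the same idea slightly differently. Rather than interleaving directly, it first invokes the general fact (Proposition~\ref{prop:absorb}) that $\widehat{f}\times\widehat{f}\equivSW\widehat{f}$ for every $f$, so that it suffices to show $\id\leqSW\widehat{\LPO}$ and $\id\leqSW\widehat{\LLPO}$; monotonicity of the product then yields $\id\times\widehat{f}\leqSW\widehat{f}\times\widehat{f}\equivSW\widehat{f}$. Its encoding of $p$ asks, for each pair $(k,m)$, whether $p(k)=m$, avoiding the passage through $\{0,1\}^\IN$. Your direct interleaving fuses these two steps into one and is perfectly valid; the paper's route has the minor advantage of isolating the reusable lemma $\widehat{f}\equivSW\widehat{f}\times\widehat{f}$, while yours is more self-contained.
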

\begin{proof}
By Proposition~\ref{prop:absorb} it is sufficient to show that
$\id:\IN^\IN\to\IN^\IN$ is strongly reducible to $\widehat{\LPO}$ and $\widehat{\LLPO}$. 
If we define $K:\IN^\IN\to\IN^\IN$ by
\[K(p)\langle\langle k,m\rangle, n\rangle:=\left\{\begin{array}{ll}
  0 & \mbox{if $p(k)=m$}\\
  1 & \mbox{otherwise}
\end{array}\right.\]
and $H:\In\IN^\IN\to\IN^\IN$ by
\[H(q)(k)=\min\{m\in\IN:q\langle k,m\rangle=0\},\]
then $H$ and $K$ are computable and we obtain
\begin{eqnarray*}
\C K(p)\langle k,m\rangle
&=& \left\{\begin{array}{ll}
    0 & \mbox{if $(\exists n)\;K(p)\langle\langle k,m\rangle, n\rangle=0$}\\
    1 & \mbox{otherwise}
    \end{array}\right.\\
&=& \left\{\begin{array}{ll}
    0 & \mbox{if $p(k)=m$}\\
    1 & \mbox{otherwise}
    \end{array}\right.
\end{eqnarray*}
and hence $H\C K(p)=p$, which proves $\id\leqSW\C=\widehat{\LPO}$ by Lemma~\ref{lem:LPO-C}.

Now we define a computable $K':\IN^\IN\to\IN^\IN$ by
\[K'(p)\langle\langle k,m\rangle, n\rangle:=\left\{\begin{array}{ll}
  1 & \mbox{if $p(k)=m$ and $n$ odd}\\
  1 & \mbox{if $p(k)\not=m$ and $n$ even}\\
  0 & \mbox{otherwise}
\end{array}\right.\]
and we use $H$ as above in order to obtain
\begin{eqnarray*}
\widehat{\LLPO}\circ K'(p)\langle k,m\rangle
&\ni& \left\{\begin{array}{ll}
    0 & \iff(\forall n)\;K'(p)\langle\langle k,m\rangle, 2n\rangle=0\\
    1 & \iff(\forall n)\;K'(p)\langle\langle k,m\rangle, 2n+1\rangle=0
    \end{array}\right.\\
&=& \left\{\begin{array}{ll}
    0 & \mbox{if $p(k)=m$}\\
    1 & \mbox{otherwise}
    \end{array}\right.\\
\end{eqnarray*}
and hence $H\circ\widehat{\LLPO}\circ K'(p)=p$, which proves $\id\leqSW\widehat{\LLPO}$.
\end{proof}

Next we prove that the operations $\LPO$ and $\LLPO$ are not idempotent.
For any function $f$ we denote by $f^{(k)}=\bigtimes_{i=1}^k f$ the $k$--fold
product of $f$ with itself. We recall that a {\em limit machine} is a Turing
machine that is allowed to revise the output and such machines can be 
used to characterize exactly the limit computable functions. A limit
machine is said to make at most $k$ mind changes, if the machine goes back
on the output tape at most $k$ many times (each time for an arbitrary finite number of cells).

\begin{proposition}
\label{prop:LPO-LLPO-mind-change}
Let $k\in\IN$.
The operations $\LPO^{(k+1)}$ and $\LLPO^{(k+1)}$ can both be computed
on a limit machine with at most $k+1$ mind changes, but not with $k$ mind changes.
\end{proposition}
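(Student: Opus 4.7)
The plan is to establish the two bounds separately, both by direct limit-machine constructions. For the upper bound we exhibit algorithms using at most $k+1$ mind changes: for $\LPO^{(k+1)}$, a limit machine initializes the first $k+1$ cells of its output tape to $(1,\ldots,1)$ and then dovetails through all positions of the input sequences $p_0,\ldots,p_k$; whenever it uncovers a zero in some $p_i$, it moves the head back and overwrites cell $i$ with $0$, then parks the head past cell $k$. Since each input component can trigger at most one such revision, and each revision costs one head reversal, the total number of mind changes is bounded by $k+1$. For $\LLPO^{(k+1)}$ the analogous algorithm starts with output $(0,\ldots,0)$, searches each $p_i$ for its unique possible nonzero entry, and revises cell $i$ to $1$ precisely when that entry sits at an even position; the domain condition on $\LLPO$ ensures each cell is revised at most once, so the bound $k+1$ again holds.

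For the lower bound on $\LPO^{(k+1)}$, I would suppose for contradiction that some limit machine $M$ computes $\LPO^{(k+1)}$ with at most $k$ mind changes on every input and build, in stages, an input forcing $k+1$ head reversals. Starting from the input given by $p_i=1^\infty$ for all $i$, we wait for $M$'s output to stabilize at $(1,\ldots,1)$ (which must happen, since the mind-change budget is finite and this is the only correct answer) at some time $t_0$. At stage $j=0,1,\ldots,k$ we alter $p_j$ by planting a single $0$ at a position past the largest cell of $p_j$ read by $M$ through time $t_j$; this leaves $M$'s behaviour through time $t_j$ unchanged, while the correct answer becomes the string with $j+1$ leading zeros followed by $k-j$ ones, forcing $M$ eventually to revise cell $j$ from $1$ to $0$. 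Letting $t_{j+1}$ denote a time by which the new stable output has been reached, each of the $k+1$ transitions from time $t_j$ to time $t_{j+1}$ requires at least one head reversal, so $M$ performs at least $k+1$ mind changes on the final input, contradicting the bound of $k$. The argument for $\LLPO^{(k+1)}$ is essentially identical but exploits multi-valuedness: start from $p_i=0^\infty$ for all $i$, wait for $M$ to stabilize at some output $(c_0,\ldots,c_k)$ (any such string is valid on the all-zero input), and at stage $j$ insert into $p_j$, past $M$'s current read, a single nonzero entry at an even position if $M$'s current value on cell $j$ is $0$ and at an odd position otherwise, so that $\LLPO(p_j)$ becomes the singleton consisting of the opposite value and $M$ must revise cell $j$.

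The main technical point to handle carefully is justifying that the adversary can always locate a stabilization time $t_j$ and can always plant its new digit past what $M$ has read---the former uses that a finite mind-change budget forces eventual stabilization for every input, the latter that $M$ at any finite time has inspected only finitely many cells. A secondary subtlety for $\LLPO^{(k+1)}$ is that later stages might threaten the forcings of earlier ones, but this is automatic: once a nonzero has been planted in $p_j$, $\LLPO(p_j)$ becomes a singleton, so $M$ cannot legally revise cell $j$ back without producing a wrong answer.
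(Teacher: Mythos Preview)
Your proposal is correct and follows essentially the same adversary strategy as the paper's proof: the paper also initializes the $\LPO^{(k+1)}$ output to $(1,\ldots,1)$ and flips components as zeros are found, and for the lower bound also feeds the all-ones input, waits for the output $(1,\ldots,1)$, and then successively plants a $0$ in each coordinate past what has been read to force a fresh mind change. The paper dispatches the $\LLPO$ case with a single sentence (``proved analogously''), whereas you spell out both the upper-bound algorithm (default $(0,\ldots,0)$, flip on finding a nonzero at an even index) and the adaptive lower-bound adversary that reads the machine's current bit and plants the nonzero at the parity that invalidates it; your added remarks on why stabilization times exist and why earlier forcings cannot be undone are genuine clarifications the paper omits, but they do not change the underlying argument.
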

\begin{proof}
We describe a limit machine for $\LPO^{(k+1)}$ that requires $k+1$
mind changes. Upon input $(p_1,...,p_{k+1})\in(\IN^\IN)^{k+1}$ the machine
writes $(1,1,...,1)\in\{0,1\}^{k+1}$ as default output and it continues
to inspect the input tuple. As soon as some $n,i$ is found with $p_i(n)=0$,
the corresponding $i$--th component of the output is changed from $1$ to $0$.
This computation requires at most $k+1$ mind changes.
On the other hand, one can easily see that no limit machine can compute
$\LPO^{(k+1)}$ with less than $k+1$ mind changes. Starting with input
$(1^\IN,1^\IN,...,1^\IN)\in(\IN^\IN)^{(k+1)}$ the machine eventually
has to produce output $(1,1,...,1)$. If this happens in time step $t$,
then one can change the first input sequence by adding a $0$ to it
in position $t+1$. This forces the machine to make a mind change and
to produce a new output $(0,1,1,...,1)$ after $t'$ time steps.
Then one changes the second input sequence and so on. Altogether,
the limit machine will have to make $k+1$ mind changes.
The fact for $\LLPO$ can be proved analogously.
\end{proof}

Since the number of mind changes required by a limit machine is invariant
under Weihrauch reducibility (see the Mind Change Principle in \cite{BG09b}),
we get the following corollary.

\begin{corollary}
\label{cor:LPO-LLPO-idempotency}
We obtain $\LPO^{(k)}\lW\LPO^{(k+1)}$ and $\LLPO^{(k)}\lW\LLPO^{(k+1)}$ for all $k\in\IN$.
In particular, $\LPO$ and $\LLPO$ are not idempotent.
\end{corollary}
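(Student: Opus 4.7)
The plan is to derive the strict reducibilities from Proposition~\ref{prop:LPO-LLPO-mind-change} by invoking the Mind Change Principle of \cite{BG09b}, which says that the minimal number of mind changes needed by a limit machine computing a function is invariant under Weihrauch reducibility. The easy direction $\LPO^{(k)}\leqW\LPO^{(k+1)}$ (and analogously for $\LLPO$) is immediate: one can compute an extra coordinate trivially (e.g.\ feed a constant sequence $1^\IN$ into the $(k+1)$-st slot and project away that component of the output), which gives a reduction using the monotonicity of products (Proposition~\ref{prop:monotone}) together with the fact that computable inputs reduce to any map with a computable point in the domain (Lemma~\ref{least:least}).

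For the strict part, I would argue by contraposition. Suppose $\LPO^{(k+1)}\leqW\LPO^{(k)}$. By Proposition~\ref{prop:LPO-LLPO-mind-change}, $\LPO^{(k)}$ is computable on a limit machine with $k$ mind changes. Any Weihrauch reduction of the form $F=H\langle\id,GK\rangle$ with $H,K$ computable then yields a limit machine for $\LPO^{(k+1)}$ that makes no more mind changes than the machine for $\LPO^{(k)}$ (the computable pre- and post-processing $K$ and $H$ do not introduce extra mind changes, and $\id$ contributes none), so $\LPO^{(k+1)}$ would be computable with $k$ mind changes, contradicting Proposition~\ref{prop:LPO-LLPO-mind-change}. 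The same argument applies verbatim to $\LLPO$.

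The final claim, non-idempotency of $\LPO$ and $\LLPO$, falls out immediately: by Proposition~\ref{prop:product}(iii) we have $\LPO\equivW\LPO^{(1)}$ and $\LPO\times\LPO\equivSW\LPO^{(2)}$, so $\LPO\equivW\LPO\times\LPO$ would give $\LPO^{(1)}\equivW\LPO^{(2)}$, contradicting the $k=1$ case just established; the same for $\LLPO$.

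The main obstacle is the assertion that a Weihrauch reduction preserves the minimal number of mind changes. The argument above only sketches informally why the computable wrappers $H$ and $K$ don't create fresh mind changes; a rigorous justification requires the precise formulation and proof of the Mind Change Principle, which is carried out in the companion paper \cite{BG09b} and which I would simply cite here rather than reproduce.
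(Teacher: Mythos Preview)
Your proof is correct and follows essentially the same route as the paper: the paper simply remarks (in the sentence preceding the corollary) that the number of mind changes required by a limit machine is invariant under Weihrauch reducibility, cites the Mind Change Principle in \cite{BG09b}, and then states the corollary without further argument, leaving both the easy direction and the non-idempotency claim implicit. You have merely spelled out these implicit steps.

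One small slip: the constant sequence $1^\IN$ is not in $\dom(\LLPO)$ (which requires at most one non-zero entry), so for the $\LLPO$ case use $0^\IN$ in the extra slot, or simply rely on your second justification via Proposition~\ref{prop:monotone} and Lemma~\ref{least:least}, which is correct as stated.
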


Moreover, similarly as we have shown $\LPO\lW\BF$ in \cite{BG09b},
we can prove more generally $\LPO^{(k)}\lW\BF$ for all $k\in\IN$. We do not want to define
$\BF$ here, but we mention that it is easy to see that it is idempotent and that $\BF\lW\widehat{\LPO}$.
Thus, we get the following corollary.

\begin{corollary}
\label{cor:LPO-product}
There exists a single-valued function $f$ that is idempotent and satisfies
$\LPO^{(k)}=\bigtimes_{i=1}^k\LPO\lW f\lW\bigtimes_{i=1}^\infty \LPO=\widehat{\LPO}$
for all $k\in\IN$.
\end{corollary}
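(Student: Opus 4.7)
The strategy is simply to take $f := \BF$, the function already singled out in the paragraph preceding the corollary, and verify that it meets each requirement.

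First I would dispose of the two equalities appearing in the statement. The identity $\LPO^{(k)} = \bigtimes_{i=1}^{k}\LPO$ is the definition of the $k$--fold product shorthand, and $\bigtimes_{i=1}^\infty \LPO = \widehat{\LPO}$ is nothing more than the definition of parallelization, combined with the identification of $\widehat{\LPO}$ with $\widehat{\LPO}\circ\pi$ that is already in force throughout this paper. Neither equality requires a genuine argument.

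Next I would read off the four remaining ingredients, all asserted for $\BF$ in the sentence immediately above the corollary: (i) $\BF$ is single-valued; (ii) $\BF$ is idempotent, i.e.\ $\BF\times\BF\equivW\BF$; (iii) $\LPO^{(k)}\lW\BF$ for every $k\in\IN$; and (iv) $\BF\lW\widehat{\LPO}$. Items (iii) and (iv) supply the two strict reductions that frame $f$, while (i) and (ii) deliver the single-valuedness and idempotency clauses of the corollary. Combining them with the already established chain $\LPO^{(k)}\lW\LPO^{(k+1)}$ from Corollary~\ref{cor:LPO-LLPO-idempotency} yields the statement verbatim.

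The main obstacle, and the reason I am calling this a proposal rather than a self-contained proof, is that $\BF$ itself is not defined in the present paper; it is an auxiliary operation from \cite{BG09b}. A fully internal argument would have to first introduce $\BF$ and then verify (i)--(iv). I would expect the lower bound (iii) to be the hard part: it requires showing that no finite product of $\LPO$'s can simulate $\BF$, and the natural tool is the Mind Change Principle of \cite{BG09b}, which is invariant under Weihrauch reducibility and which, via Proposition~\ref{prop:LPO-LLPO-mind-change}, bounds the number of mind changes for $\LPO^{(k)}$ by $k$ while leaving $\BF$ unbounded. The upper bound (iv) and idempotency (ii) should be routine: an upper bound reduction of $\BF$ to $\C = \widehat{\LPO}$ (by Lemma~\ref{lem:LPO-C}) together with a direct packaging argument that two independent copies of $\BF$ can be merged into one. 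Granting (i)--(iv), the corollary is then immediate.
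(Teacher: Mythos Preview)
Your proposal is correct and follows exactly the paper's own approach: take $f:=\BF$ and invoke the properties (single-valuedness, idempotency, $\LPO^{(k)}\lW\BF$, $\BF\lW\widehat{\LPO}$) asserted in the sentence immediately preceding the corollary, all of which are deferred to \cite{BG09b}. The only superfluous step is your appeal to the chain $\LPO^{(k)}\lW\LPO^{(k+1)}$, which is not needed here since items (iii) and (iv) already give the two strict reductions directly.
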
 

Thus, the parallelization of $\LPO$ is not the supremum of the finite products of $\LPO$.
An analogous result can be proved for $\LLPO$.

\section{The lesser limited principle of omniscience and weak computability}
\label{sec:LLPO}

In this section we want to study the parallelization of $\LLPO$.
Similarly, as $\widehat{\LPO}$ is complete for the class of limit
computable operations, we will show that $\widehat{\LLPO}$ is also
complete for a very natural class of operations that we will call {\em weakly computable}.

We recall that by $\widehat{\LLPO}$ we actually mean $\widehat{\LLPO}\circ\pi$.
Thus, in the following
\[\widehat{\LLPO}\langle p_0,p_1,...\rangle(k)\ni\left\{\begin{array}{ll}
  0 & \mbox{if $(\forall n)\;p_k(2n)=0$}\\
  1 & \mbox{if $(\forall n)\;p_k(2n+1)=0$}
\end{array}\right.\]
One benefit of this understanding of $\widehat{\LLPO}$ is that it is
composable with itself and
the next observation is that the composition of $\widehat{\LLPO}$ with itself
is strongly below itself\footnote{In general, we define the composition $g\circ f$ of two multi-valued 
maps $f:\In X\mto Y$ and $g:\In Y\mto Z$ by $\dom(g\circ f):=\{x\in\dom(f):f(x)\In\dom(g)\}$
and $(g\circ f)(x):=\{z\in Z:(\exists y)(y\in f(x)\mbox{ and }z\in g(y))\}$.}.
Roughly speaking this is because $\LLPO$ is defined
only in terms of universal quantifiers and two consecutive universal quantifiers 
can be absorbed in one.

\begin{lemma} 
\label{lem:LLPO-double}
$\widehat{\LLPO}\circ\widehat{\LLPO}\leqSW\widehat{\LLPO}$.
\end{lemma}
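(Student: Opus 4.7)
The plan is to establish $\widehat{\LLPO}\circ\widehat{\LLPO}\leqSW\widehat{\LLPO}$ by exhibiting a computable $K$ that flattens a nested $\LLPO$-query into a single one, and by taking $H:=\id$. Given an input $p=\langle p_0,p_1,\ldots\rangle$ in the domain of $\widehat{\LLPO}\circ\widehat{\LLPO}$, I would define $K(p)=\tilde p=\langle\tilde p_0,\tilde p_1,\ldots\rangle$ by
\[
\tilde p_j(2\langle n,m\rangle):=p_{\langle j,2n\rangle}(2m),\qquad
\tilde p_j(2\langle n,m\rangle+1):=p_{\langle j,2n+1\rangle}(2m).
\]
Thus the even positions of $\tilde p_j$ monitor the non-zero even-position entries of those $p_{\langle j,n\rangle}$ with $n$ even, while the odd positions of $\tilde p_j$ monitor the corresponding entries for $n$ odd. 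This $K$ is plainly computable.

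The first verification step is to show $\tilde p_j\in\dom(\LLPO)$ for every $j$, i.e., that $\tilde p_j$ carries at most one non-zero entry. Each $p_i\in\dom(\LLPO)$ already has at most one non-zero entry. Moreover, since $p\in\dom(\widehat{\LLPO}\circ\widehat{\LLPO})$ forces every $r\in\widehat{\LLPO}(p)$ to satisfy $r_j\in\dom(\LLPO)$, no two distinct indices $n$ can have $p_{\langle j,n\rangle}$ carrying an even-position non-zero; otherwise every such $r$ would be forced to take the value $1$ at two positions of $r_j$, violating $r_j\in\dom(\LLPO)$. Hence $\tilde p_j$ has at most one non-zero entry.

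The second step is to verify $\widehat{\LLPO}(\tilde p)\In\widehat{\LLPO}\circ\widehat{\LLPO}(p)$. By construction, the parity of the (at most one) non-zero position of $\tilde p_j$ matches the parity of the unique index $n$ (if any) for which $p_{\langle j,n\rangle}$ has an even-position non-zero. Define the canonical intermediate value $r$ by $r(\langle j,n\rangle):=1$ precisely when $p_{\langle j,n\rangle}$ has an even-position non-zero and $r(\langle j,n\rangle):=0$ otherwise. Then $r\in\widehat{\LLPO}(p)$ trivially, and $r\in\dom(\widehat{\LLPO})$ by the first step. A small case analysis on whether the non-zero of $\tilde p_j$ sits at an even position, an odd position, or is absent shows $\LLPO(\tilde p_j)\In\LLPO(r_j)$ for every $j$, so any $s\in\widehat{\LLPO}(\tilde p)$ lies in $\widehat{\LLPO}(r)\In\widehat{\LLPO}\circ\widehat{\LLPO}(p)$.

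Taking $H:=\id$, for any realizer $G$ of $\widehat{\LLPO}$ and any $p$ in the domain of the composition one then has $HGK(p)=G(\tilde p)\in\widehat{\LLPO}(\tilde p)\In\widehat{\LLPO}\circ\widehat{\LLPO}(p)$, giving the desired strong reduction. The main subtlety is the parity bookkeeping combined with the domain check: one must see that admissibility of $p$ for the two-fold composition is precisely what guarantees $\tilde p_j\in\dom(\LLPO)$, after which the verification reduces to a routine substitution.
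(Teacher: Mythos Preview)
Your proof is correct and uses literally the same reduction function as the paper: your $K$ coincides with the paper's $F$, defined by $F(p)\langle k,2\langle n,m\rangle\rangle:=p\langle\langle k,2n\rangle,2m\rangle$ and $F(p)\langle k,2\langle n,m\rangle+1\rangle:=p\langle\langle k,2n+1\rangle,2m\rangle$, together with $H=\id$. The paper compresses the verification into a chain of $\iff$'s yielding $\widehat{\LLPO}\circ\widehat{\LLPO}=\widehat{\LLPO}\circ F$, whereas you spell out the domain check $\tilde p_j\in\dom(\LLPO)$ and exhibit an explicit intermediate $r\in\widehat{\LLPO}(p)$ witnessing the inclusion; this extra care is a genuine improvement, since the paper's equational argument leaves the domain condition implicit.
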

\begin{proof}
We define a computable function $F:\IN^\IN\to\IN^\IN$ by
\[\left\{\begin{array}{lcl}
F(p)\langle k,2\langle n,m\rangle\rangle &:=& p\langle\langle k,2n\rangle,2m\rangle\\
F(p)\langle k,2\langle n,m\rangle+1\rangle &:=& p\langle\langle k,2n+1\rangle,2m\rangle
\end{array}\right.\]
for all $p\in\IN^\IN$ and $k,n,m\in\IN$. Then we obtain
\begin{eqnarray*}
\widehat{\LLPO}\circ\widehat{\LLPO}(p)(k)
&\ni& \left\{\begin{array}{ll}
    0 & \iff(\forall n)\;\widehat{\LLPO}(p)\langle k,2n\rangle=0\\
    1 & \iff(\forall n)\;\widehat{\LLPO}(p)\langle k,2n+1\rangle=0
    \end{array}\right.\\
&=& \left\{\begin{array}{ll}
    0 & \iff(\forall n)(\forall m)\;p\langle\langle k,2n\rangle,2m\rangle=0\\
    1 & \iff(\forall n)(\forall m)\;p\langle\langle k,2n+1\rangle,2m\rangle=0
    \end{array}\right.\\
&=& \left\{\begin{array}{ll}
    0 & \iff(\forall\langle n,m\rangle)\;F(p)\langle k,2\langle n,m\rangle\rangle=0\\
    1 & \iff(\forall\langle n,m\rangle)\;F(p)\langle k,2\langle n,m\rangle+1\rangle=0\\
    \end{array}\right.\\
&\in& \widehat{\LLPO}\circ F(p)(k).        
\end{eqnarray*}
Thus, $\widehat{\LLPO}\circ\widehat{\LLPO}=\widehat{\LLPO}\circ F$, which proves
in particular that any realizer of $\widehat{\LLPO}$ computes a realizer 
of $\widehat{\LLPO}\circ\widehat{\LLPO}$, i.e.\ $\widehat{\LLPO}\circ\widehat{\LLPO}\leqSW\widehat{\LLPO}$.
\end{proof}

The analogous statement for $\widehat{\LPO}$ does not hold true. 

\begin{lemma}
\label{lem:LPO-completeness}
$\widehat{\LPO}\circ\widehat{\LPO}\nleqW\widehat{\LPO}$.
\end{lemma}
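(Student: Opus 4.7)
The natural lever is Corollary~\ref{cor:completeness}: if $\widehat{\LPO}\circ\widehat{\LPO}\leqW\widehat{\LPO}$ held then by the forward implication of that corollary, $\widehat{\LPO}\circ\widehat{\LPO}$ would have to be $\SO{2}$-computable on the computable metric space $\IN^\IN$. My plan is to contradict this by placing $\widehat{\LPO}\circ\widehat{\LPO}$ strictly higher in the effective Borel hierarchy, namely at the $\SO{3}$-complete level, and then invoking the strict separation $\SO{3}\neq\SO{2}$ from effective descriptive set theory.

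The heuristic is hierarchical. A single copy of $\widehat{\LPO}$ already exhausts the level $\SO{2}$, since it amounts to countably many parallel $\sO{1}$-queries about the given input. Feeding its output into a second copy of $\widehat{\LPO}$ should correspond to asking countably many $\sO{1}$-queries about a $\SO{2}$-computable intermediate object, which by definition is the next level $\SO{3}$. Concretely, I would identify a $\SO{3}$-complete test function, for instance the componentwise decision of the $\sP{3}$-predicate $(\forall k)(\exists m)\,p\langle n,k,m\rangle=0$ parametrised by $n$, and exhibit an explicit Weihrauch reduction of it to $\widehat{\LPO}\circ\widehat{\LPO}$. The inner copy of $\widehat{\LPO}$ can be used to resolve the $\exists m$ quantifier uniformly in $n,k$; the outer copy, applied to a bit-flipped encoding of the first output, should then flip existential into universal and aggregate the $\forall k$ quantifier. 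Combined with $\SO{3}\neq\SO{2}$ this rules out $\SO{2}$-computability of the composition and hence, by Corollary~\ref{cor:completeness}, also the reduction.

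The main technical obstacle is avoiding a spurious collapse of the composition under pairing. A naive unwinding writes the $n$-th output bit of $\widehat{\LPO}\circ\widehat{\LPO}(p)$ as $(\exists k)(\exists m)\,p\langle\langle n,k\rangle,m\rangle=0$, which is only $\sO{1}$ and would wrongly suggest $\SO{2}$-computability. The crux of the plan is therefore to track the representation of the intermediate space $\IN^\IN$ carefully, so that the quantifier alternation of the $\sP{3}$-source predicate is preserved through the reduction rather than flattened by pairing into a single existential block. This in turn forces the bit-flipping step of the reduction to exploit features of the output representation of the first $\widehat{\LPO}$ which are not visible at the level of raw pairings, and this is where the substantive technical work in the proof would sit.
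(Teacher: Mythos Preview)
You have put your finger on the crux, but then misdiagnosed it. The collapse you describe is \emph{not} spurious: since $\widehat{\LPO}=\C$ literally (Lemma~\ref{lem:LPO-C}) and both copies act on $\IN^\IN$ with the identity representation, the composition $\widehat{\LPO}\circ\widehat{\LPO}=\C\circ\C$ is a fixed total function, and your own computation
\[
(\C\circ\C)(p)(n)=0\iff(\exists k)(\exists m)\;p\langle\langle n,k\rangle,m\rangle=0
\]
is correct. Reindexing via the computable $K$ with $K(p)\langle n,\langle k,m\rangle\rangle:=p\langle\langle n,k\rangle,m\rangle$ gives $\C\circ\C=\C\circ K$, hence $\C\circ\C\leqSW\C$. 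There is no hidden representation to track and no bit-flip to extract from it: the intermediate data are literally the $\{0,1\}$--valued outputs of the inner $\C$, fed unchanged into the outer one. So the lemma, read literally, is false, and neither your argument nor any other can establish it.

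The paper's proof is a one-line citation asserting that $\widehat{\LPO}\circ\widehat{\LPO}=\C^2$ is $\SO{3}$--complete; the second claim is correct for the function denoted $\C^2$ in \cite{Bra05}, but that function is not the bare composition $\C\circ\C$, so the identification is the step that fails. What is true, and what the application in Theorem~\ref{thm:omniscience} actually requires, is that the cone below $\widehat{\LPO}$ is not closed under composition. With the bit-flip $N(q)(n):=1-q(n)$ one has $N\circ\C\equivW\C$, yet
\[
(\C\circ N\circ\C)(p)(n)=0\iff(\exists k)(\forall m)\;p\langle\langle n,k\rangle,m\rangle\not=0,
\]
so $\C\circ N\circ\C$ is $\SO{3}$--complete and hence $\nleqW\C$ by Corollary~\ref{cor:completeness}. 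Your hierarchical intuition is exactly right for this corrected statement; the bit-flip you were searching for must be inserted \emph{between} the two copies of $\widehat{\LPO}$, not recovered from the output representation.
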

\begin{proof}
The function $\widehat{\LPO}=\C$
is known to be $\SO{2}$--complete and $\widehat{\LPO}\circ\widehat{\LPO}=\C^2$ is $\SO{3}$--complete
with respect to the effective Borel hierarchy \cite{Bra05}. That is $\C^2\nleqW\C$.
\end{proof}

Similarly as $\LPO$ translates Sierpi{\'n}ski space into the ordinary Boolean space,
we can consider $\LLPO$ as a translation of Kleene's ternary logic $K_3$ into ordinary
Boolean logic. Kleene's ternary logic uses the truth values $\IT:=\{0,1,\frac{1}{2}\}$,
where $\frac{1}{2}$ represents ``unknown''. Here we assume that $\IT$ is equipped with
the topology $\{\{0\},\{1\},\{0,1,\frac{1}{2}\}\}$ and with the canonical admissible
representation

\[\delta_\IT(p):=\left\{\begin{array}{ll}
  0 & \mbox{if $(\exists n)\;p(2n+1)\not=0$}\\
  1 & \mbox{if $(\exists n)\;p(2n)\not=0$}\\
  \frac{1}{2} & \mbox{if $(\forall n)\;p(n)=0$}
\end{array}\right.\]
with $\dom(\delta_\IT)=\dom(\LLPO)$. 

\begin{lemma}[Kleene's ternary logic]
The multi-valued map 
\[\mbox{$L:\IT\mto\{0,1\},L(0)=\{0\},L(1)=\{1\},L(\frac{1}{2})=\{0,1\}$}\]
is strongly Weihrauch equivalent to $\LLPO$.
\end{lemma}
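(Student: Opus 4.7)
The plan is to show that $L$ and $\LLPO$ have literally the same set of realizers under the representations $(\delta_\IT,\delta_{\{0,1\}})$ and $(\id,\delta_\IN)$ respectively. Once this is established, strong Weihrauch equivalence follows trivially by choosing $H=K=\id$ in the defining equation $F=HGK$ for $\leqSW$.

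The key step is a case analysis verifying the pointwise identity $L\delta_\IT(p)=\LLPO(p)$ for every $p\in\dom(\delta_\IT)=\dom(\LLPO)$. There are three cases, using that on this common domain $p$ has at most one nonzero entry. If some $p(2n+1)\neq 0$, then $\delta_\IT(p)=0$, so $L\delta_\IT(p)=\{0\}$; on the other hand every even-indexed entry of $p$ is zero while not every odd-indexed entry is, so $\LLPO(p)=\{0\}$ as well. Symmetrically, if some $p(2n)\neq 0$ then both sides equal $\{1\}$. Finally, if $p$ is the zero sequence, then $\delta_\IT(p)=\tfrac12$ and $L\delta_\IT(p)=\{0,1\}$, and also both defining clauses of $\LLPO$ are satisfied so $\LLPO(p)=\{0,1\}$.

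With this identity in hand, a map $F:\In\IN^\IN\to\IN^\IN$ is a realizer of $L$ iff $\delta_{\{0,1\}}F(p)=F(p)(0)\in L\delta_\IT(p)$ for all $p\in\dom(\delta_\IT)$, which by the above identity is exactly the condition $F(p)(0)\in\LLPO(p)$ defining a realizer of $\LLPO$. Hence the realizer sets coincide. Taking $H=K=\id$ in Definition of $\leqSW$ then yields $L\leqSW\LLPO$ and $\LLPO\leqSW L$ simultaneously.

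The main (non-)obstacle is purely bookkeeping: ensuring that the asymmetric roles of even and odd indices in $\delta_\IT$ match the corresponding clauses in $\LLPO$, i.e.\ that a nonzero entry at an odd position corresponds to the ternary value $0$ (not $1$). This is immediate from inspecting the two definitions side by side, so no technical difficulty arises; the lemma essentially records that the representation $\delta_\IT$ was built so as to make $\LLPO$ the realizer-level incarnation of Kleene's multi-valued disjunction $L$.
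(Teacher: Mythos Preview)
Your proof is correct and is exactly the approach the paper takes: the paper's entire proof is the one sentence ``The proof is obvious since $L$ and $\LLPO$ share the same realizations,'' and you have carefully unpacked that sentence by verifying $L\delta_\IT(p)=\LLPO(p)$ case by case on the common domain $\dom(\delta_\IT)=\dom(\LLPO)$ and then observing that equal realizer sets give $\leqSW$ in both directions via $H=K=\id$.
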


The proof is obvious since $L$ and $\LLPO$ share the same realizations.
Now we define canonical extensions of Boolean functions from $\{0,1\}$ to $\IT$.

\begin{definition}\rm
For any Boolean function $f:\{0,1\}^n\to\{0,1\}$ we define the {\em ternary extension}
$f':\IT^n\to\IT$ by
\[f'(t_1,...,t_n):=L'f(L(t_1)\times...\times L(t_n))\]
where $L':\In2^{\{0,1\}}\to\IT,L'(\{0\})=0,L'(\{1\})=1, L'(\{0,1\})=\frac{1}{2}$.
\end{definition}

In this way, any ordinary Boolean operation can be transferred into its
counter part $f'$ in the strong version of Kleene's ternary logic $K_3$.
This holds in particular for the NAND operation $A\mid B$ and we explicitly
calculate the ternary truth table
that we obtain in this way:

\begin{table}[h]
\begin{tabular}{c|ccccccccc}
A       & 0 & 0 & 1 & 1 & 0 & 1 & $\frac{1}{2}$ & $\frac{1}{2}$ & $\frac{1}{2}$\\[0.1cm]
B       & 0 & 1 & 0 & 1 & $\frac{1}{2}$ & $\frac{1}{2}$ & 0 & 1 & $\frac{1}{2}$\\[0.1cm]\hline\\[-0.3cm]
$A\mid B$ & 1 & 1 & 1 & 0 & 1 & $\frac{1}{2}$ & 1 & $\frac{1}{2}$ & $\frac{1}{2}$
\end{tabular}
\end{table}

We prove that the NAND operation is computable on $\IT$.

\begin{lemma}
\label{lem:NAND}
The operation $(.\mid.):\IT\times\IT\to\IT,(A,B)\mapsto A\mid B$
is computable.
\end{lemma}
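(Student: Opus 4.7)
The plan is to exhibit a computable $F : \IN^\IN \to \IN^\IN$ realizing $(\cdot \mid \cdot)$ with respect to $\delta_\IT \times \delta_\IT$ and $\delta_\IT$. The key is to read off the NAND truth table in terms of positive evidence: $A \mid B = 1$ holds exactly when $A = 0$ or $B = 0$, $A \mid B = 0$ holds exactly when $A = B = 1$, and $A \mid B = \frac{1}{2}$ in all remaining cases. Under $\delta_\IT$, evidence that ``$A = 0$'' is a nonzero entry of $p$ at an odd position, evidence that ``$A = 1$'' is a nonzero entry of $p$ at an even position, and ``$A = \frac{1}{2}$'' corresponds to $p$ being identically $0$, and similarly for $q$ and $B$.

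First I would define $F\langle p,q\rangle$ as follows. At an even output position $2k$ I place a $1$ precisely when $k$ is the least index for which $p(2k+1) \neq 0$ or $q(2k+1) \neq 0$; at an odd output position $2k+1$ I place a $1$ precisely when $k$ is the least index for which one can find $j, m \leq k$ with $p(2j) \neq 0$ and $q(2m) \neq 0$; all other output positions are $0$. Each condition depends on only finitely many entries of $p$ and $q$, so $F$ is plainly computable.

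Next I would verify that $F\langle p,q\rangle \in \dom(\delta_\IT)$, i.e.\ has at most one nonzero entry. The minimality clause in each rule ensures each rule fires at most once. Moreover the two rules cannot both fire on a valid input: if the even rule fires, then $p$ or $q$ has a nonzero at an odd position, and since $p, q \in \dom(\delta_\IT) = \dom(\LLPO)$ have at most one nonzero each, this prevents both $p$ and $q$ from having a nonzero at an even position, so the odd rule cannot fire. Symmetrically for the odd rule.

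Finally I would check the nine cases of the truth table to confirm that $\delta_\IT F\langle p, q\rangle = A \mid B$ throughout. The verification is mechanical: the presence of odd-position nonzeros in $p$ or $q$ triggers the even rule exactly when $A \mid B = 1$ must be asserted, the presence of even-position nonzeros in both $p$ and $q$ triggers the odd rule exactly when $A \mid B = 0$ must be asserted, and the absence of both triggers corresponds exactly to $A \mid B = \frac{1}{2}$. I do not foresee a real obstacle; the only point that demands any care is the mutual-exclusion argument for the two rules, which was handled above by exploiting the domain constraint on $\LLPO$.
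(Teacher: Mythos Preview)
Your proposal is correct and follows essentially the same strategy as the paper: construct an explicit computable realizer $F:\In\IN^\IN\to\IN^\IN$ for the NAND operation with respect to $\delta_\IT$, then verify it against the truth table. The paper's realizer $N$ is defined by a case analysis on the shape of the input pair (where the single nonzero of each coordinate sits, if it exists), whereas you define the output bit-by-bit via finite conditions on the input; these are just two presentations of the same idea, and your mutual-exclusion argument using the $\dom(\LLPO)$ constraint is exactly the point that makes either realizer land in $\dom(\delta_\IT)$.
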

\begin{proof}
The function $N:\In\IN^\IN\to\IN^\IN$ with 
$\dom(N)=\langle\dom(\LLPO)\times\dom(\LLPO)\rangle$, defined by
\begin{eqnarray*}
N\langle 0^kb0^\IN,0^nc0^\IN\rangle&:=&\left\{\begin{array}{ll}
                                 0^{\max\{k,n\}+1}10^\IN & \mbox{if $k,n$ both even}\\
                                 0^{\min\{k,n\}+1}10^\IN & \mbox{if $\min\{k,n\}$ odd}
                          \end{array}\right.\\
N\langle 0^kb0^\IN,0^\IN\rangle&:=&N\langle 0^\IN,0^kb0^\IN\rangle := \left\{\begin{array}{ll}
                                 0^{k+1}10^\IN & \mbox{if $k$ odd}\\
                                 0^\IN         & \mbox{otherwise}
                          \end{array}\right.\\
N\langle 0^\IN,0^\IN\rangle       &:=& 0^\IN
\end{eqnarray*}
for all $k,n\in\IN$, $b,c\in\IN\setminus\{0\}$
is continuous and computable and it realizes the NAND operation with respect to $([\delta_\IT,\delta_\IT],\delta_\IT)$.
\end{proof}

As a corollary we obtain that all Boolean operations are computable
in the strong version of Kleene's ternary logic.

\begin{corollary}
\label{cor:ternary}
Let $f:\{0,1\}^n\to\{0,1\}$ be an arbitrary (Boolean) function.
Then the ternary counterpart $f':\IT^n\to\IT$ is computable.
\end{corollary}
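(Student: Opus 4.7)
The plan is to produce a computable realizer for $f'$ by a direct semi-decision procedure, rather than attempting to express $f'$ as a composition of copies of the ternary NAND from Lemma~\ref{lem:NAND}. The latter route is tempting but fails in general: the ternary extension is not functorial under composition of Boolean functions, as can be seen from the example $f(x)=x\vee\neg x$, whose extension $f'$ is identically $1$ on $\IT$, while composing ternary OR and NOT gives value $\tfrac{1}{2}$ at input $\tfrac{1}{2}$.

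Given a name $\langle p_1,\dots,p_n\rangle$ of $(t_1,\dots,t_n)\in\IT^n$ under the product representation of $\delta_\IT$, I would at each stage $s$ read the prefixes $p_i[s]$ and record, for each coordinate $i$, a current possibility set $S_i^{(s)}\In\{0,1\}$: it equals $\{0\}$ once $p_i[s]$ contains a non-zero at an odd position, $\{1\}$ once it contains a non-zero at an even position, and $\{0,1\}$ otherwise. Then I would compute the image $I^{(s)}:=f(S_1^{(s)}\times\dots\times S_n^{(s)})\In\{0,1\}$, which is effective since $f$ has finite domain. While $I^{(s)}=\{0,1\}$ the output is extended by a $0$; as soon as $I^{(s)}=\{b\}$ for some $b\in\{0,1\}$ (and no non-zero has yet been emitted), I would pad with zeros until the next position of parity $1-b$ is reached, write $1$ there, and then write zeros forever. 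This produces a valid $\delta_\IT$-name by construction.

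To verify correctness I would use two observations: first, $S_i^{(s)}$ shrinks monotonically with $s$, is always $\supseteq L(t_i)$, and equals $L(t_i)$ eventually (as soon as a witness for $t_i\in\{0,1\}$ has appeared, and throughout if $t_i=\tfrac{1}{2}$); second, by the definition of the ternary extension, $f(L(t_1)\times\dots\times L(t_n))$ is $\{f'(t)\}$ if $f'(t)\in\{0,1\}$ and $\{0,1\}$ if $f'(t)=\tfrac{1}{2}$. Combining these, if $f'(t)=b\in\{0,1\}$ then eventually $I^{(s)}=\{b\}$ and the algorithm emits a non-zero at a position of parity $1-b$, so that $\delta_\IT$ returns $b$; if $f'(t)=\tfrac{1}{2}$ then $I^{(s)}\supseteq f(L(t))=\{0,1\}$ at every stage, so the output is $0^\IN$, which $\delta_\IT$ sends to $\tfrac{1}{2}$.

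The main conceptual obstacle is recognizing why the seductive NAND-composition route fails and replacing it with a genuine semi-decision. Once that is in place, the only delicate point is matching the parity convention of $\delta_\IT$ (odd-indexed witness for $0$, even-indexed for $1$), which the explicit padding step handles; everything else is a routine unfolding of the definitions of $L$, $L'$ and $\delta_\IT$.
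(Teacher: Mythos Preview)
Your direct semi-decision argument is correct and complete. Interestingly, the paper itself takes precisely the route you warn against: it argues that since NAND is functionally complete over $\{0,1\}$, one writes $f$ as a NAND term and then lifts it to $\IT$ gate by gate via Lemma~\ref{lem:NAND}, claiming that ``the same substitution yields the extension $f'$ of $f$ to $\IT$.'' Your counterexample $f(x)=x\vee\neg x$ is decisive against this: $f'$ is the constant $1$, yet every term built from ternary NAND with the single input $x$ evaluates to $\frac{1}{2}$ at $x=\frac{1}{2}$ (by induction on the term, using the last column of the paper's truth table, $\frac{1}{2}\mid\frac{1}{2}=\frac{1}{2}$). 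So the paper's compositional sketch has a genuine gap, and your proof repairs it. The structural reason is that the Kleene extension $f'$ is the \emph{greatest} monotone extension of $f$ in the information order (with $\frac{1}{2}$ below both $0$ and $1$), whereas gate-by-gate lifting of an arbitrary NAND term for $f$ produces only \emph{some} monotone extension, possibly strictly less informative than $f'$; your stage-by-stage image computation $I^{(s)}=f(S_1^{(s)}\times\cdots\times S_n^{(s)})$ recovers the maximal one directly.
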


The proof follows from the fact that any Boolean function $f$ can be realized
using substitutions of the NAND operation $\mid$ since the NAND operation
is complete. Using the extension of the NAND operation to $\IT$, the same
substitution yields the extension $f'$ of $f$ to $\IT$.
It is easy to see that this corollary even holds uniformly,
i.e.\ given a description of $f$ with respect to some standard representation
$[\delta_{\{0,1\}}^n\to\delta_{\{0,1\}}]$, we can find a description of $f'$
with respect to $[\delta_\IT^n\to\delta_\IT]$.

\begin{corollary}[Ternary extension]
\label{cor:uniform-ternary}
The operation
\[T:\{0,1\}^{\{0,1\}^n}\to\CC(\IT^n,\IT),\;f\mapsto f'\]
is $([\delta_{\{0,1\}}^n\to\delta_{\{0,1\}}],[\delta_\IT^n\to\delta_\IT])$--computable.
\end{corollary}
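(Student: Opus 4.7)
The plan is to upgrade Corollary~\ref{cor:ternary} to a uniform statement by tracking how the NAND-realization of a Boolean function can be produced effectively from a name of that function, and then composing this with the computable ternary NAND of Lemma~\ref{lem:NAND} via the standard utm/smn machinery for function space representations (see \cite{Wei00}, Chapter~3).

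First I would observe that the input representation $[\delta_{\{0,1\}}^n\to\delta_{\{0,1\}}]$ of the finite set $\{0,1\}^{\{0,1\}^n}$ is (equivalent to) a representation by finite truth tables: since $\{0,1\}^n$ is a computable, finite, discrete space, a $[\delta_{\{0,1\}}^n\to\delta_{\{0,1\}}]$--name of $f$ lets us read off each of the finitely many values $f(b_1,\ldots,b_n)$ for $(b_1,\ldots,b_n)\in\{0,1\}^n$ uniformly in the input name. From the truth table I would then produce, by a purely syntactic (and hence computable) transformation, a NAND formula $\Phi_f$ that realizes $f$, e.g.\ by passing through disjunctive normal form and then expressing $\vee,\wedge,\neg$ in terms of $\mid$. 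This step is effective because the truth table has only $2^n$ rows, so $\Phi_f$ can be chosen from a recursive enumeration of NAND formulas.

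The second step is to note that substituting the computable ternary NAND operation $(\,.\mid.\,):\IT\times\IT\to\IT$ of Lemma~\ref{lem:NAND} into the syntactic skeleton of $\Phi_f$ produces, by Corollary~\ref{cor:ternary}, precisely $f':\IT^n\to\IT$: the definition $f'(t_1,\ldots,t_n)=L'f(L(t_1)\times\cdots\times L(t_n))$ commutes with composition of Boolean functions, hence with any NAND decomposition. Thus the composite $\Phi_{f'}:\IT^n\to\IT$ obtained by replacing every Boolean NAND gate in $\Phi_f$ by the ternary NAND and every input wire by the corresponding projection equals $f'$.

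Finally I would invoke the effective closure of $\CC(\IT^n,\IT)$ under composition and projection, i.e.\ the computability of the evaluation and type-conversion operators for the admissible representation $[\delta_\IT^n\to\delta_\IT]$. Since the finitely many building blocks used in $\Phi_{f'}$ (projections and the ternary NAND) have computable $[\delta_\IT^n\to\delta_\IT]$--names, and since the formula $\Phi_f$ is produced effectively from the input, applying the computable composition operator a bounded number of times yields a $[\delta_\IT^n\to\delta_\IT]$--name of $f'$ uniformly in the input name of $f$. The main obstacle here is only bookkeeping: one must verify that the chosen syntactic transformation truth-table~$\mapsto$~NAND formula is realized by a genuine Type-2 computable procedure, and that the composition operator on $\CC$-spaces of admissibly represented spaces behaves uniformly on finite formulas, both of which are standard facts of computable analysis.
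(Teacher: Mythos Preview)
Your proposal faithfully elaborates the paper's own sketch (which amounts to one sentence before the corollary and the remark ``It is easy to see that this corollary even holds uniformly''). Unfortunately the central claim---that the ternary extension $f\mapsto f'$ commutes with composition of Boolean functions, so that substituting the ternary NAND of Lemma~\ref{lem:NAND} into any NAND formula for $f$ yields $f'$---is false. Take $n=1$ and the constant function $f(b)\equiv1$, so that $f'\equiv1$ on $\IT$. Any NAND formula for $f$ in the single variable $b$, for instance $b\mid(b\mid b)$, evaluated at $b=\frac12$ gives $\frac12\mid(\frac12\mid\frac12)=\frac12\mid\frac12=\frac12\neq1$. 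The underlying phenomenon is the familiar failure of $p\vee\neg p$ to evaluate to $1$ in strong Kleene logic: whenever a variable occurs more than once in a formula, the compositional three-valued evaluation treats the occurrences as independent, whereas the definition of $f'$ correlates them. So the step ``the composite $\Phi_{f'}$ \ldots\ equals $f'$'' does not go through, and the same gap is present in the paper's one-line justification of Corollary~\ref{cor:ternary}.

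The corollary is nevertheless true, by a direct argument that bypasses NAND formulas entirely. From a $[\delta_{\{0,1\}}^n\to\delta_{\{0,1\}}]$--name of $f$ one reads off the full truth table. Given $\delta_\IT$--names $p_1,\ldots,p_n$ of $t_1,\ldots,t_n$, maintain at stage $k$ the finite set $S_k\In\{0,1\}^n$ of Boolean tuples still compatible with the prefixes $p_1[k],\ldots,p_n[k]$; this sequence is decreasing and eventually stabilises at $L(t_1)\times\cdots\times L(t_n)$. Output zeros until $f(S_k)$ becomes a singleton, then emit a single nonzero digit of the appropriate parity and zeros thereafter. This realizes $f'$ with respect to $(\delta_\IT^n,\delta_\IT)$, uniformly in the truth table of $f$, and hence yields a $[\delta_\IT^n\to\delta_\IT]$--name of $f'$.
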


The next observation is that parallelized $\LLPO$ is upper semi-computable
as a set-valued operation. By $\KK_-(\{0,1\}^\IN)$ we denote the set of
all compact subsets of $\{0,1\}^\IN$ represented by the negative information
representation $\kappa_-$. A name of a compact set $K$ with respect to $\kappa_-$
is a list of all finite open rational covers of $K$. 

\begin{lemma}
\label{lem:compact-LLPO}
The function 
\[F:\In\IN^\IN\to\KK_-(\{0,1\}^\IN),p\mapsto\widehat{\LLPO}(p)\]
is computable.
\end{lemma}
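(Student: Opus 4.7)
I will show that from $p$ we can computably enumerate all finite unions of basic open cylinders covering $\widehat{\LLPO}(p)\subseteq\{0,1\}^\IN$, which constitutes a $\kappa_-$--name. First note that if we write $p=\langle p_0,p_1,p_2,\ldots\rangle$, then
\[\widehat{\LLPO}(p)=\bigtimes_{k\in\IN}\LLPO(p_k),\]
a (possibly non-empty) product of subsets of $\{0,1\}$, hence a compact subset of $\{0,1\}^\IN$.

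The key observation is that for each fixed $k$ and $v\in\{0,1\}$, the condition ``$v\notin\LLPO(p_k)$'' is $\Sigma^0_1$ in $p$: by definition it unfolds to $(\exists n)\;p_k(2n)\not=0$ when $v=0$, and to $(\exists n)\;p_k(2n+1)\not=0$ when $v=1$. Consequently, for any basic clopen cylinder of the form
\[C=\{q\in\{0,1\}^\IN:q(k_i)=v_i\text{ for }i=1,\ldots,m\},\]
the condition ``$C\cap\widehat{\LLPO}(p)=\emptyset$'' is equivalent to $\bigvee_{i=1}^m v_i\notin\LLPO(p_{k_i})$ and is therefore also $\Sigma^0_1$ in $p$, uniformly in the (finite) data describing $C$.

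Next I handle arbitrary finite unions. Given a finite union $U=U_1\cup\cdots\cup U_n$ of basic clopen cylinders, its complement in $\{0,1\}^\IN$ can be effectively rewritten (by the standard disjunctive normal form manipulation on a finite set of coordinates) as a finite union of basic cylinders $V_1\cup\cdots\cup V_l$. Then $U\supseteq\widehat{\LLPO}(p)$ iff each $V_j$ is disjoint from $\widehat{\LLPO}(p)$, which is a finite conjunction of $\Sigma^0_1$ conditions, hence $\Sigma^0_1$ in $p$. Thus we can semi-decide, uniformly in $p$ and in a finite description of $U$, whether $U$ covers $\widehat{\LLPO}(p)$; enumerating all finite unions for which this semi-decision halts produces a $\kappa_-$--name of $\widehat{\LLPO}(p)$. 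Since the clopen cylinders form an effective basis of $\{0,1\}^\IN$ and the space is effectively compact, such an enumeration of finite clopen covers is equivalent to an enumeration of finite open rational covers, i.e.\ to a $\kappa_-$--name.

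The main (minor) obstacle is the translation between the cylindrical basis and the rational open balls used to define $\kappa_-$, but this is routine since $\{0,1\}^\IN$ is effectively compact and zero-dimensional, so the two ways of giving negative information coincide computably. All other steps are direct inspection of the definition of $\LLPO$.
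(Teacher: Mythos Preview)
Your proof is correct and rests on the same key observation as the paper's: for each coordinate $k$ and value $v\in\{0,1\}$, the condition $v\notin\LLPO(p_k)$ is $\Sigma^0_1$ in $p$, which yields the required negative information about $\widehat{\LLPO}(p)$. The only difference is packaging: the paper enumerates the forbidden basic cylinders $\{0,1\}^i v$ directly to obtain a $\psi_-$--name of the closed set and then invokes the computable equivalence $\psi_-\equiv\kappa_-$ on the computably compact space $\{0,1\}^\IN$, whereas you go straight to $\kappa_-$ by semi-deciding coverings (doing the DNF complement bookkeeping by hand); both routes are standard and equally valid.
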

\begin{proof}
Given a sequence $p=\langle p_0,p_1,p_2,...\rangle\in\dom(\widehat{\LLPO})$
one can start to inspect the sequences $p_0,p_1,...$ for an element different
from $0$. Whenever such an element is found in some $p_i$, then this provides
a piece of negative information on $F(p)$, since depending on whether the
non-zero position in $p_i$ occurs in an even or odd position, this implies
that $F(p)(i)$ is different from $0$ or $1$. In other words, in this moment
the negative information $\{0,1\}^i1$ or $\{0,1\}^i0$ can be enumerated.
This procedure describes the enumeration of a set $W\In\{0,1\}^*$ of words
such that $F(p)=\{0,1\}^\IN\setminus W\{0,1\}^\IN$. Such an enumeration
constitutes a $\psi_-$--name of $F(p)$, which can be translated into 
a $\kappa_-$--name of $F(p)$ since $\{0,1\}^\IN$ is computably compact
(see \cite{BP03}).
\end{proof}

As another auxiliary result we will use the following lemma
that guarantees that we can compute a modulus of uniform continuity
for computable functions on compact sets.
%Here and in the following $p[k]=p(0)p(1)...p(k-1)$ denotes the prefix of length $k$ of a sequence $p$.

\begin{lemma}[Modulus of uniform continuity]
\label{lem:modulus}
For any computable function\linebreak $F:\In\{0,1\}^\IN\to\{0,1\}^\IN$
there is a computable multi-valued function
\[M:\In\KK_-(\{0,1\}^\IN)\mto\IN^\IN\]
such that $\dom(M)=\{K:K\In\dom(F)\}$ and any $m\in M(K)$ is 
a uniform modulus of continuity of $F$ on $K$, i.e.\
\[F(p[m(n)]\IN^\IN)\In F(p)[n]\IN^\IN\]
for all $p\in K$ and $n\in\IN$.
\end{lemma}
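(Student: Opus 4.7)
The plan is to exploit the standard fact that a computable function $F:\In\{0,1\}^\IN\to\{0,1\}^\IN$ is realized by a computable monotone word function $f:\{0,1\}^*\to\{0,1\}^*$ such that for every $p\in\dom(F)$, $F(p)=\sup_m f(p[m])$, and in particular $|f(p[m])|\to\infty$ as $m\to\infty$. For each pair $(m,n)\in\IN^2$ I would consider the clopen rational open set
\[A_{m,n}:=\bigcup\bigl\{u\{0,1\}^\IN:u\in\{0,1\}^m,\;|f(u)|\geq n\bigr\},\]
noting that these sets are computable uniformly in $(m,n)$ and that monotonicity of $f$ gives $A_{m,n}\In A_{m+1,n}$.

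The key observation is that for any $p\in\dom(F)$ and any $n$ there is some $m$ with $|f(p[m])|\geq n$, so $p\in A_{m,n}$. Hence if $K\In\dom(F)$, then $K\In\bigcup_m A_{m,n}$, and by compactness there is some $m$ with $K\In A_{m,n}$. From a $\kappa_-$-name of $K$ in the computably compact space $\{0,1\}^\IN$ the relation $K\In V$ is semi-decidable for c.e.\ open rational $V$ (this is where the negative-information representation is used), and in particular $K\In A_{m,n}$ is semi-decidable uniformly in $(m,n)$. I would therefore define $M$ by: on input $(K,n)$, dovetail these semi-decisions over $m=0,1,2,\ldots$ and output the first $m=m(n)$ that succeeds. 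Collecting the $m(n)$ into a sequence yields a computable multi-valued map $M:\In\KK_-(\{0,1\}^\IN)\mto\IN^\IN$.

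For the verification, fix $p\in K$; since $p\in A_{m(n),n}$, we have $|f(p[m(n)])|\geq n$, and for any $q\in p[m(n)]\{0,1\}^\IN\cap\dom(F)$ we get $f(q[m(n)])=f(p[m(n)])$, which is a prefix of both $F(q)$ and $F(p)$ of length at least $n$. Thus $F(q)[n]=F(p)[n]$, establishing $F(p[m(n)]\{0,1\}^\IN)\In F(p)[n]\{0,1\}^\IN$. Conversely, if $K\not\In\dom(F)$ pick $p\in K\setminus\dom(F)$; then $\sup_m|f(p[m])|<\infty$, so for $n$ larger than this supremum no $m$ satisfies $p\in A_{m,n}$, hence no $m$ satisfies $K\In A_{m,n}$ and the search diverges, matching $\dom(M)=\{K:K\In\dom(F)\}$. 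The main obstacle is conceptually minor but technical: verifying that the semi-decidability of $K\In A_{m,n}$ is indeed uniform in $(m,n)$ from $\kappa_-$-information, and that the standard passage between $\kappa_-$ and the negative information representation $\psi_-$ on the computably compact space $\{0,1\}^\IN$ (as cited earlier from \cite{BP03}) transports cleanly through this construction.
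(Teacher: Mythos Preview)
Your proof is correct and follows essentially the same approach as the paper: both use a computable monotone word function $f$ approximating $F$ and search, given $n$, for a length $m$ such that every word of length $m$ whose cylinder meets $K$ satisfies $|f(w)|\geq n$; the paper phrases this via enumerating the set $W$ of words whose cylinders miss $K$ and checking the finite condition $(\forall w\in\{0,1\}^k\setminus W)\,|f(w)|\geq n$, while you phrase the same condition as $K\In A_{m,n}$ and semi-decide it from the $\kappa_-$-name. One remark: your final divergence argument for $K\not\In\dom(F)$ is unnecessary (a realizer of the multi-valued $M$ is unconstrained outside $\dom(M)$, which is simply part of the specification) and is not quite justified as written, since the standard approximating $f$ is only required to satisfy $F(p)=\sup_m f(p[m])$ for $p\in\dom(F)$ and may well produce an infinite output on some $p\notin\dom(F)$.
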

\begin{proof}
If $F:\In\{0,1\}^\IN\to\{0,1\}^\IN$ is computable, then there is
a computable monotone function $f:\{0,1\}^*\to\{0,1\}^*$ that
approximates $F$, i.e.\ such that $F(p)=\sup_{w\prefix p}f(w)$
for all $p\in\dom(F)$ (see \cite{Wei00}). Given $K\in\KK_-(\{0,1\}^\IN)$,
we can enumerate $W:=\{w\in\{0,1\}^*:w\IN^\IN\In\{0,1\}^\IN\setminus K\}$.
Any function $m:\IN\to\IN$ that satisfies
\[m(n)\geq\min\{k\in\IN:(\forall w\in\{0,1\}^k\setminus W)\;|f(w)|\geq n\}\]
is a modulus of uniform continuity of $F$ on $K$. Given $n\in\IN$ the set 
$M_n$ on the right hand side is non-empty, since $F$ is uniformly continuous 
on the compact set $K$ and although we might not be able to find 
$\min M_n$, we can certainly find some point $m(n)\in M_n$ by exhaustive search 
since $\{0,1\}^k$ is finite for any $k\in\IN$ and $W$ can be enumerated.
Thus, we can compute some modulus $m$ of uniform continuity of $F$.
\end{proof}

Using the NAND operation we can prove another interesting property of $\widehat{\LLPO}$,
namely that it has some quasi-continuity property although it is discontinuous 
and we will exploit this property for our main result in this section.
This result can also be interpreted as a completeness result for parallelized $\LLPO$.

\begin{theorem}[Completeness of parallelized $\LLPO$]
\label{thm:LLPO-swap}
For any comp\-ut\-able function $F:\In\{0,1\}^\IN\to\{0,1\}^\IN$ 
there exists a computable $G:\In\{0,1\}^\IN\to\{0,1\}^\IN$ 
such that 
\[F\circ\widehat{\LLPO}=\widehat{\LLPO}\circ G.\]
\end{theorem}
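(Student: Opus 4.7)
The plan is to lift the Boolean behaviour of $F$ on the computable compact set $\widehat{\LLPO}(p)$ into Kleene's ternary logic $\IT$ via Corollary~\ref{cor:uniform-ternary}, and then to re-encode the resulting ternary values as $\LLPO$-inputs using $\delta_\IT$. The enabling observations are: by Lemma~\ref{lem:compact-LLPO} the set $\widehat{\LLPO}(p)$ is computable as an element of $\KK_-(\{0,1\}^\IN)$, and by Lemma~\ref{lem:modulus} we can also compute a modulus of uniform continuity of $F$ on this set; thus each output bit $F(\cdot)(n)$ is determined on $\widehat{\LLPO}(p)$ by finitely many input bits and hence by a Boolean function, whose computable ternary extension is then provided by Corollary~\ref{cor:uniform-ternary}.

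Concretely, given $p\in\dom(F\circ\widehat{\LLPO})$, I first compute the $\kappa_-$-name of $K:=\widehat{\LLPO}(p)$, which is contained in $\dom(F)$, and a modulus $m$ of uniform continuity of $F$ on $K$. For each $n\in\IN$ I extract a Boolean function $f_n:\{0,1\}^{m(n+1)}\to\{0,1\}$ with $f_n(x[m(n+1)])=F(x)(n)$ for all $x\in K$. This extraction is uniformly computable in $w\in\{0,1\}^{m(n+1)}$ by running two processes in parallel: one enumerating the words whose cylinders are disjoint from $K$ (available from the $\psi_-$-representation of $K$ as in the proof of Lemma~\ref{lem:modulus}), and the other searching for an extension $v\sqsupseteq w$ on which the monotone approximation of $F$ produces at least $n+1$ output bits. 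For $w$ consistent with $K$ the second process terminates since $w\IN^\IN\cap K\In\dom(F)$, while for inconsistent $w$ the first process terminates and we may set $f_n(w)$ arbitrarily. Applying Corollary~\ref{cor:uniform-ternary} yields the computable ternary extensions $f_n':\IT^{m(n+1)}\to\IT$. Viewing $p=\langle p_0,p_1,\ldots\rangle$ as a sequence of $\delta_\IT$-names of values $t_i:=\delta_\IT(p_i)$, I compute $t_n^{\mathrm{out}}:=f_n'(t_0,\ldots,t_{m(n+1)-1})$ as a $\delta_\IT$-name in $\{0,1\}^\IN\cap\dom(\LLPO)$ and define $G(p)_n$ to be this name. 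All steps are uniform in $p$, so $G$ is computable.

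For the verification, at each position $n$ one has
\[\widehat{\LLPO}(G(p))(n)=\LLPO(G(p)_n)=L(t_n^{\mathrm{out}})=f_n\bigl(L(t_0)\times\cdots\times L(t_{m(n+1)-1})\bigr),\]
by the definition of the ternary extension. Since $L(t_i)=\LLPO(p_i)$ is exactly the set of permitted values of $\widehat{\LLPO}(p)$ at position $i$, and since the modulus $m$ ensures $F(y)(n)=f_n(y[m(n+1)])$ for every $y\in K$, this set equals $\{F(y)(n):y\in\widehat{\LLPO}(p)\}$. This is the componentwise set equality expressed by $F\circ\widehat{\LLPO}=\widehat{\LLPO}\circ G$, in the same spirit as the equality manipulated in Lemma~\ref{lem:LLPO-double}.

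The hard part is the uniform extraction of the Boolean truth tables $f_n$ from the partial computable $F$: one cannot in general decide whether a word $w$ is consistent with $K$, but the parallel enumeration strategy above circumvents this by always terminating and by returning the correct value whenever $w$ is consistent with $K$, the value on inconsistent $w$ being irrelevant to the subsequent ternary computation.
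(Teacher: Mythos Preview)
Your proposal is correct and follows essentially the same approach as the paper's proof: compute $K_p=\widehat{\LLPO}(p)$ via Lemma~\ref{lem:compact-LLPO}, obtain a uniform modulus via Lemma~\ref{lem:modulus}, extract Boolean functions $f_n$ describing each output bit on $K_p$, lift them to ternary via Corollary~\ref{cor:uniform-ternary}, and assemble $G$ from the resulting realizers. Your write-up is in fact more explicit than the paper on the one point the paper leaves implicit, namely the uniform extraction of $f_n$ from the partial computable $F$; your parallel-search argument (racing enumeration of words excluded from $K$ against the monotone approximation of $F$) is exactly the right way to fill that gap, and your observation that the value on $K$-inconsistent words is irrelevant to the ternary computation is the key point.
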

\begin{proof}
Let $F:\In\{0,1\}^\IN\to\{0,1\}^\IN$ be computable and let $p\in P:=\dom(F\circ\widehat{\LLPO})$.
Then we can compute $K_p=\widehat{\LLPO}(p)\in\KK_-(\{0,1\}^\IN)$ by Lemma~\ref{lem:compact-LLPO}.
Since $K_p\In\dom(F)$ it follows that $F$ is uniformly continuous on $K_p$
and given $p\in P$ we can compute a modulus of uniform continuity $m_p:\IN\to\IN$ of $F$ on $K_p$
by Lemma~\ref{lem:modulus}. We obtain
$F(q[m_p(n)]\IN^\IN)\In F(q)[n]\IN^\IN$
for all $n\in\IN$ and $q\in K_p$. 
Without loss of generality we can assume $m_p(n)\geq1$ for all $n\in\IN$.
Moreover, given $p\in P$ and $n\in\IN$, we can use
any machine for $F$ in order to compute Boolean functions 
$f_{p,n}:\{0,1\}^{m_p(n+1)}\to\{0,1\}$ such that 
\[F(q)(n)=f_{p,n}(q(0),...,q(m_p(n+1)-1))\]
for any $q\in K_p$ and for $n\in\IN$. By Corollary~\ref{cor:uniform-ternary} 
we can compute realizations $G_{p,n}:\In\{0,1\}^\IN\to\{0,1\}^\IN$
of the ternary extensions $f_{p,n}':\IT^{m_p(n+1)}\to\IT$ for any $p\in P$ and $n\in\IN$.
Now we define a computable function $G:\In\{0,1\}^\IN\to\{0,1\}^\IN$ by
\[G\langle p_0,p_1,...\rangle:=\langle G_{p,0}\langle p_0,...,p_{m_p(1)-1}\rangle,
                                       G_{p,1}\langle p_0,...,p_{m_p(2)-1}\rangle,...\rangle\]
for any $p=\langle p_0,p_1,p_2,...\rangle\in P$ and we obtain
\begin{eqnarray*}
&& \widehat{\LLPO}\circ G(p)\\
&=& \langle\LLPO\circ G_{p,0}\langle p_0,...,p_{m_p(1)-1}\rangle,\LLPO\circ G_{p,1}\langle p_0,...,p_{m_p(2)-1}\rangle,...\rangle\\
&=& \{\langle f_{p,0}(q(0),...,q(m_p(1)-1)),f_{p,1}(q(0),...,q(m_p(2)-1)),...\rangle:q\in K_p\}\\
&=& F(K_p)\\
&=& F\circ\widehat{\LLPO}(p)
\end{eqnarray*}  
as desired.
\end{proof}

If we combine the results proved so far, then we obtain that the multi-valued 
operations below $\widehat{\LLPO}$ are closed under composition. This has first
been observed in \cite{GM09}, where it was expressed in terms of Weak K\H{o}nig's Lemma
(see also Corollary~\ref{cor:hahn-banach}).

\begin{proposition}[Composition]
\label{prop:LLPO-comp}
Let $f:\In X\mto Y$ and $g:\In Y\mto Z$ be multi-valued operations on 
represented spaces. Then 
\[f\leqW\widehat{\LLPO}\mbox{ and }g\leqW\widehat{\LLPO}\TO g\circ f\leqW\widehat{\LLPO}.\]
The same holds true with respect to some oracle (i.e.\ we can replace Weih\-rauch reducibility
by its continuous counterpart in all occurrences here).
\end{proposition}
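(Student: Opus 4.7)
The plan is to lift both hypotheses to strong Weihrauch reductions using the cylinder property of $\widehat{\LLPO}$, then rewrite the natural composite realizer (which calls $\widehat{\LLPO}$ twice) so that it uses $\widehat{\LLPO}$ only once, by combining Theorem~\ref{thm:LLPO-swap} (to commute the intermediate computable glue past $\widehat{\LLPO}$) with Lemma~\ref{lem:LLPO-double} (to collapse two consecutive applications of $\widehat{\LLPO}$).

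By Proposition~\ref{prop:LLPO-cylinder} and Corollary~\ref{cor:cylinder-reduction}, the hypotheses yield $f\leqSW\widehat{\LLPO}$ and $g\leqSW\widehat{\LLPO}$; fix computable $H_f,K_f,H_g,K_g$ witnessing these strong reductions. Composing realizers, $H_gLK_gH_fLK_f$ is a realizer of $g\circ f$ for every realizer $L$ of $\widehat{\LLPO}$, but it invokes $L$ twice. Let $M:=K_g\circ H_f$ (a computable map which, by replacing each non-zero value in its output by $1$, we may take of type $\In\{0,1\}^\IN\to\{0,1\}^\IN$, since $\widehat{\LLPO}$ depends on its input only modulo the $0$/non-$0$ pattern). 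Theorem~\ref{thm:LLPO-swap} applied to $M$ yields a computable $G$ with $M\circ\widehat{\LLPO}=\widehat{\LLPO}\circ G$, and the construction in the proof of Lemma~\ref{lem:LLPO-double} yields a computable $F$ with $\widehat{\LLPO}\circ\widehat{\LLPO}=\widehat{\LLPO}\circ F$. These identities give, as multi-valued functions,
\[
H_g\circ\widehat{\LLPO}\circ M\circ\widehat{\LLPO}\circ K_f
=H_g\circ\widehat{\LLPO}\circ\widehat{\LLPO}\circ G\circ K_f
=H_g\circ\widehat{\LLPO}\circ F\circ G\circ K_f,
\]
suggesting the candidate reduction $H:=H_g$ and $K:=F\circ G\circ K_f$, both computable.

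The main obstacle is verifying that the chain of multi-valued identities really makes $HLK$ a bona fide realizer of $g\circ f$ for any \emph{single} realizer $L$ of $\widehat{\LLPO}$. Fix a name $p$ of some $x\in\dom(g\circ f)$. Then
\[
L(FGK_f(p))\in\widehat{\LLPO}(F(GK_f(p)))=(\widehat{\LLPO}\circ\widehat{\LLPO})(GK_f(p))=\widehat{\LLPO}(M(\widehat{\LLPO}(K_f(p)))),
\]
so there exists $s\in\widehat{\LLPO}(K_f(p))$ with $L(FGK_f(p))\in\widehat{\LLPO}(K_gH_f(s))$. The reduction $f\leqSW\widehat{\LLPO}$ ensures $H_f(s)$ is a name of some $y\in f(x)\subseteq\dom(g)$, and then the reduction $g\leqSW\widehat{\LLPO}$ applied to the name $H_f(s)$ ensures $HLK(p)=H_g(L(FGK_f(p)))$ is a name of an element of $g(y)\subseteq(g\circ f)(x)$, as needed. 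Hence $g\circ f\leqSW\widehat{\LLPO}$ and in particular $g\circ f\leqW\widehat{\LLPO}$. The relativized (continuous) version follows identically, since Proposition~\ref{prop:LLPO-cylinder}, Theorem~\ref{thm:LLPO-swap}, and Lemma~\ref{lem:LLPO-double} all relativize verbatim to an arbitrary oracle.
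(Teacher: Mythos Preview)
Your proof is correct and follows essentially the same route as the paper's own argument: lift to strong reductions via the cylinder property, use Theorem~\ref{thm:LLPO-swap} to push the intermediate computable map $K_gH_f$ past $\widehat{\LLPO}$, then use Lemma~\ref{lem:LLPO-double} to collapse the two consecutive $\widehat{\LLPO}$'s into one. Your ``main obstacle'' paragraph, which carefully verifies that $HLK$ is a realizer of $g\circ f$ for a \emph{single} realizer $L$ by chasing an element $s\in\widehat{\LLPO}(K_f(p))$, is in fact more explicit than the paper, which simply asserts the multi-valued identity $H\circ\widehat{\LLPO}\circ K\circ H'\circ\widehat{\LLPO}\circ K'=H\circ\widehat{\LLPO}\circ K''$ and leaves this check to the reader.
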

\begin{proof}
Let $f\leqW\widehat{\LLPO}$ and $g\leqW\widehat{\LLPO}$.
Since $\widehat{\LLPO}$ is a cylinder by Proposition~\ref{prop:LLPO-cylinder}  
there are computable functions $H,K,H',K':\In\IN^\IN\to\IN^\IN$ such that
$F=H'L'K'$ is a realizer of $f$ for any realizer $L'$ of $\widehat{\LLPO}$
and $G=HLK$ is a realizer of $g$ for any realizer $L$ of $\widehat{\LLPO}$.
This follows from Corollary~\ref{cor:cylinder-reduction}.
In particular, any realizer of $H\circ\widehat{\LLPO}\circ K\circ H'\circ\widehat{\LLPO}\circ K'$
is a realizer of $gf$. 
We can assume that $\dom(KH')\cup\range(KH')\In\{0,1\}^\IN$      
and hence by Theorem~\ref{thm:LLPO-swap}
there is a computable function $G$ such that $KH'\circ\widehat{\LLPO}=\widehat{\LLPO}\circ G$.
Let $F$ be the computable function according to Lemma~\ref{lem:LLPO-double} such that
$\widehat{\LLPO}\circ\widehat{\LLPO}=\widehat{\LLPO}\circ F$. Then with $K'':=FGK'$ we obtain
\[H\circ\widehat{\LLPO}\circ K\circ H'\circ\widehat{\LLPO}\circ K'=H\circ\widehat{\LLPO}\circ K'',\]
which implies that for any realizer $L$ of $\LLPO$ the function $HLK''$ is a realizer
of $gf$. Hence, $gf\leqW\widehat{\LLPO}$.
In the presence of some oracle the reasoning is analogously,
only the functions $H,K,H',K'$ have to be replaced by continuous ones.
\end{proof}

We believe that this result justifies to give a new name to the operations below
$\widehat{\LLPO}$. 

\begin{definition}[Weakly computable]\rm
A function $f:\In X\mto Y$ on represented spaces $X,Y$ is called {\em weakly computable},
if $f\leqW\widehat{\LLPO}$. Similarly, such a function is called {\em weakly continuous},
if $f\leqW\widehat{\LLPO}$ holds with respect to some oracle.
\end{definition}

One main goal of this section is to prove the following theorem
on the omniscience principles. This theorem completely characterizes the relation 
of the omniscience principles and their parallelizations with respect to Weihrauch 
reducibility.

\begin{theorem}[Omniscience principles]
\label{thm:omniscience}
We obtain 
\[\LLPO\lW\LPO\nW\widehat{\LLPO}\lW\widehat{\LPO}.\]
All negative results also hold true with respect to some arbitrary oracle.
\end{theorem}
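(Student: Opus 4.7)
The theorem bundles together six separate claims (two positive reductions, two strictness statements, and the two non-reductions comprising the incomparability $\LPO\nW\widehat{\LLPO}$). My plan is to first establish the base reduction $\LLPO\leqW\LPO$ by hand, lift it to the parallelized versions, then derive one central separation ($\widehat{\LPO}\nleqW\widehat{\LLPO}$) from the results of the present section, and obtain all remaining separations from it together with the mind change principle suggested by Proposition~\ref{prop:LPO-LLPO-mind-change}.

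For $\LLPO\leqSW\LPO$, on input $p\in\dom(\LLPO)$ I would set $K(p)(n):=p(2n)$ and let $H$ flip the output bit: if $\LPO K(p)=1$ all even positions vanish so $0\in\LLPO(p)$, and if $\LPO K(p)=0$ some even position is non-zero, forcing (because at most one position of $p$ is non-zero) all odd positions to vanish, whence $1\in\LLPO(p)$. Applying monotonicity of parallelization from Proposition~\ref{prop:parallelization}(2) then yields $\widehat{\LLPO}\leqW\widehat{\LPO}$ at once.

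The crucial separation is $\widehat{\LPO}\nleqW\widehat{\LLPO}$. I would argue by contradiction: if that reduction held, then the composition closure Proposition~\ref{prop:LLPO-comp} would give $\widehat{\LPO}\circ\widehat{\LPO}\leqW\widehat{\LLPO}$, and combining with the already proved $\widehat{\LLPO}\leqW\widehat{\LPO}$ would produce $\widehat{\LPO}\circ\widehat{\LPO}\leqW\widehat{\LPO}$, contradicting Lemma~\ref{lem:LPO-completeness}. This step is the main obstacle, since it is exactly here that the technical machinery of the section (Kleene's ternary logic, the $\LLPO$-swap of Theorem~\ref{thm:LLPO-swap}, and the composition closure itself) pays off; everything else is comparatively soft. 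From this separation the companion incomparability $\LPO\nleqW\widehat{\LLPO}$ falls out: a reduction $\LPO\leqW\widehat{\LLPO}$ would, by monotonicity and idempotency of parallelization in Proposition~\ref{prop:parallelization}, give $\widehat{\LPO}\leqW\widehat{\widehat{\LLPO}}\equivW\widehat{\LLPO}$, contradicting what was just shown. The strictness in $\LLPO\lW\LPO$ then follows at no extra cost, since $\LPO\leqW\LLPO$ would compose with the extensivity $\LLPO\leqW\widehat{\LLPO}$ to contradict $\LPO\nleqW\widehat{\LLPO}$.

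It remains to show $\widehat{\LLPO}\nleqW\LPO$. For this I would invoke the mind change count: by Proposition~\ref{prop:LPO-LLPO-mind-change}, $\LLPO^{(2)}$ requires at least two mind changes on a limit machine while $\LPO$ requires only one. Since the Mind Change Principle (invariance of mind change complexity under Weihrauch reducibility) carries this obstruction across reductions, and since $\LLPO^{(2)}=\LLPO\times\LLPO\leqW\widehat{\LLPO}\times\widehat{\LLPO}\equivSW\widehat{\LLPO}$ by Propositions~\ref{prop:monotone} and~\ref{prop:absorb}, a reduction $\widehat{\LLPO}\leqW\LPO$ would force $\LLPO^{(2)}\leqW\LPO$, which is impossible. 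Finally, since every negative step above uses only continuity of the reduction witnesses $H$ and $K$, each argument relativizes verbatim and the separations persist with respect to any oracle, as the theorem demands.
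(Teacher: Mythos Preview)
Your overall architecture matches the paper's proof exactly: the same hand reduction $\LLPO\leqSW\LPO$, the same lift via Proposition~\ref{prop:parallelization}, the same contradiction for $\widehat{\LPO}\nleqW\widehat{\LLPO}$ through Proposition~\ref{prop:LLPO-comp} and Lemma~\ref{lem:LPO-completeness}, the same derivation of $\LPO\nleqW\widehat{\LLPO}$ and $\LPO\nleqW\LLPO$ from the closure properties of parallelization, and the same mind-change argument for $\widehat{\LLPO}\nleqW\LPO$ (the paper states directly that $\widehat{\LLPO}$ needs more than one mind change, but your route through $\LLPO^{(2)}$ and Proposition~\ref{prop:LPO-LLPO-mind-change} is the same idea made explicit).

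There is one genuine slip in your base reduction. With $K(p)(n):=p(2n)$ you have the semantics of $\LPO$ backwards: $\LPO(q)=1$ means that \emph{no} position of $q$ is zero, so on any $p\in\dom(\LLPO)$ (which has at most one non-zero entry) your $K(p)$ contains infinitely many zeros and $\LPO K(p)=0$ \emph{always}, giving no information. The paper fixes this by setting $K(p)(n):=1\dmin p(2n)$, so that $K(p)(n)=0$ iff $p(2n)\not=0$; then $\LPO K(p)=0$ detects a non-zero even position (forcing $1\in\LLPO(p)$) and $\LPO K(p)=1$ detects that all even positions vanish (giving $0\in\LLPO(p)$), exactly the case split you intended.
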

\begin{proof}
Firstly, it is easy to see that $\LLPO\leqSW\LPO$.
We define a computable function $K:\IN^\IN\to\IN^\IN$ by
$K(p)(n):=1\dmin p(2n)$ and we obtain
\begin{itemize}
\item $\LPO(K(p))=0\TO(\exists n)\;p(2n)\not=0\TO 1\in\LLPO(p)$,
\item $\LPO(K(p))=1\TO(\forall n)\;p(2n)=0\TO0\in\LLPO(p)$.
\end{itemize}
With a simple negating function $H$ this shows that $\LLPO\leqSW\LPO$.

Since parallelization is a closure operator by Proposition~\ref{prop:parallelization},
it follows that $\widehat{\LLPO}\leqW\widehat{\LPO}$. 
Let us now assume that $\widehat{\LPO}\leqW\widehat{\LLPO}$ with respect to some oracle.
Then 
\[\widehat{\LPO}\circ\widehat{\LPO}\leqW\widehat{\LLPO}\leqW\widehat{\LPO}\] 
would follow with respect to some oracle since by Proposition~\ref{prop:LLPO-comp} 
all operations below $\widehat{\LLPO}$ are closed under composition.
However, the above is a contradiction to Lemma~\ref{lem:LPO-completeness}.
Thus $\widehat{\LPO}\nleqW\widehat{\LLPO}$. 
This also implies $\LPO\nleqW\widehat{\LLPO}$ and $\LPO\nleqW\LLPO$ since parallelization 
is a closure operator by Proposition~\ref{prop:parallelization}.
Finally, $\widehat{\LLPO}\nleqW\LPO$ follows from the Mind Change Property proved in \cite{BG09b}, 
since $\LPO$ can be computed with at most one mind change by Proposition~\ref{prop:LPO-LLPO-mind-change}, 
whereas it is easy to see that $\widehat{\LLPO}$ cannot be computed with one mind change.
All the results hold true with respect to some oracle.
\end{proof}

Note that the proof even shows the strong reduction $\LLPO\leqSW\LPO$.
A different direct proof of $\LPO\nleqW\LLPO$ is presented in Theorem~4.2 in \cite{Wei92c}.

Since any discontinuous single-valued function is already above $\LPO$,
it is clear that no such single-valued function can be below $\widehat{\LLPO}$.
In other words, the parallel Weihrauch degree of $\LLPO$ has no single-valued member.
In particular, this means that multi-valuedness does not appear
accidentally in our theory, but in some sense it is unavoidable.
Indeed we will prove in Corollary~\ref{cor:single-valued-weakly-computable}
that any single-valued weakly computable function is already computable
in the ordinary sense.

\section{Compact choice and Weak K\H{o}nig's Lemma}
\label{sec:WKL}

In this section we will prove that the parallel version of $\LLPO$ is equivalent
to Weak K\H{o}nig's Lemma. We first formalize Weak K\H{o}nig's Lemma
for this purpose. We recall that a {\em binary tree} is a subset $T\In\{0,1\}^*$ that
is closed under the prefix relation, i.e.\ if $w\in T$ and $v\prefix w$, then $v\in T$. 
We use some standard bijective enumeration $(w_n)_{n\in\IN}$ of all the binary words.
By $\Tr$ we denote the set of all binary trees and we use a representation $\delta_\Tr$ of $\Tr$
that is defined by
\[\delta_{\Tr}(p)=T:\iff\chi_T(w_n)=p(n),\]
where $\chi_T:\{0,1\}^*\to\{0,1\}$ denotes the characteristic function of the binary tree $T$.
The classical statement of K\H{o}nig's Lemma is that any infinite binary tree has 
an infinite path. An infinite path of $T$ is a sequence $p\in\{0,1\}^\IN$, such that 
$p[n]\in T$ for all $n\in\IN$. 
By $[T]$ the set of infinite paths of $T$ is denoted.
Now we can formalize Weak K\H{o}nig's Lemma as follows.

\begin{definition}[Weak K\H{o}nig's Lemma]\rm
We define a multi-valued operation 
\[\WKL:\In\Tr\mto\{0,1\}^\IN,T\mapsto[T]\]
with $\dom(\WKL)=\{T\In\{0,1\}^*:T$ is an infinite binary tree$\}$.
\end{definition}

Weak K\H{o}nig's Lemma has already been studied in this form in \cite{GM09}. Our main result
here is that the parallel version of $\LLPO$ is strongly equivalent to Weak K\H{o}nig's Lemma.
For the proof we use Weak K\H{o}nig's Lemma itself.

\begin{theorem}[Weak K\H{o}nig's Lemma]
\label{thm:WKL}
$\WKL\equivSW\widehat{\LLPO}$.
\end{theorem}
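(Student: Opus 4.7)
The plan is to prove the strong equivalence directly in both directions, producing explicit computable translations. The content of the equivalence is essentially that an infinite binary tree is just the natural combinatorial packaging of countably many parallel $\LLPO$ questions.

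For $\widehat{\LLPO}\leqSW\WKL$, I would proceed as follows. Given an input $p=\langle p_0,p_1,p_2,\ldots\rangle\in\dom(\widehat{\LLPO})$, compute the binary tree
\[
T_p:=\{v\in\{0,1\}^*:(\forall i<|v|)(\forall n\leq|v|)\;p_i(2n+v(i))=0\},
\]
reading $v(i)\in\{0,1\}$ as selecting the even or odd parity. Since each $p_i\in\dom(\LLPO)$ has a nonzero value in at most one position, at most one of the two choices $v(i)\in\{0,1\}$ is forbidden at level $i$, so $T_p$ has an extension at every level and is thus infinite. Any infinite path $r\in[T_p]$ satisfies $p_i(2n+r(i))=0$ for every $i$ and $n$, which is exactly the condition $r(i)\in\LLPO(p_i)$; hence $r\in\widehat{\LLPO}(p)$. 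The map $p\mapsto T_p$ (via the characteristic function encoding $\delta_{\Tr}$) is computable, so any realizer of $\WKL$ applied to $T_p$ yields a realizer of $\widehat{\LLPO}$ on $p$, and no separate copy of the input is needed.

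For $\WKL\leqSW\widehat{\LLPO}$, I would attach to each word $w\in\{0,1\}^*$ an input $p_w$ that asks $\LLPO$ which child of $w$ is extendible in $T$. Specifically, run a stagewise search: at stage $s$, check whether $w0\{0,1\}^s\cap T=\emptyset$ or $w1\{0,1\}^s\cap T=\emptyset$. Write $0$'s into $p_w$ until the \emph{first} such event occurs; if it concerns $w0$ put the unique value $1$ into an even position of $p_w$, if it concerns $w1$ into an odd position, and never write a further nonzero entry. If no such event ever occurs, $p_w=0^\IN$. From an enumeration $(w_n)_{n\in\IN}$ of $\{0,1\}^*$ this produces a computable sequence $\langle p_{w_0},p_{w_1},\ldots\rangle$ belonging to $\dom(\widehat{\LLPO})$. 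Feeding this to $\widehat{\LLPO}$ returns a function $f:\{0,1\}^*\to\{0,1\}$ with the property that whenever $w$ is extendible to an infinite path in $T$, so is $w\cdot f(w)$ (the non-extendible child, if any, is the one refuted first). Iterating $f$ starting from $\varepsilon$ yields an infinite path of $T$, and this last step is uniformly computable from $f$.

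The main obstacle is the one mitigated by the ``first refutation wins'' device in direction two: for arbitrary $w$ we have no guarantee that $w$ lies on an infinite path, so a naive sequence recording all refutations for $w0$ and $w1$ could accumulate several nonzero entries and fall outside $\dom(\LLPO)$, which would make the whole sequence inadmissible for $\widehat{\LLPO}$. Stopping after a single event ensures $p_w\in\dom(\LLPO)$ unconditionally, while still giving the correct output for every $w$ that matters, i.e.\ those extendible in $T$. Everything else in the argument is bookkeeping: verifying that both constructions are realized by computable $K$ and $H$ with the output of $G$ applied directly in between, so both reductions are in fact strong, giving $\WKL\equivSW\widehat{\LLPO}$.
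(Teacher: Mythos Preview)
Your proof is correct and follows essentially the same approach as the paper: for $\widehat{\LLPO}\leqSW\WKL$ you build the same tree of parity-consistent choices, and for $\WKL\leqSW\widehat{\LLPO}$ you attach to each node an $\LLPO$ instance recording which child dies first, then iterate from the root---exactly the paper's strategy. The only cosmetic difference is that where you use a ``first refutation wins'' rule to keep $p_w\in\dom(\LLPO)$, the paper achieves the same effect by outputting $0^\IN$ whenever both children are refuted at the minimal witnessing level.
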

\begin{proof}
We first prove $\WKL\leqSW\widehat{\LLPO}$. Given an infinite binary tree $T$,
we want to find an infinite path $p\in\{0,1\}^\IN$ in $T$.
For $n\in\IN$ and $i\in\{0,1\}$ we consider the following sets
\[P_{n,i}:=\{w\in\{0,1\}^*:(\forall v\in\{0,1\}^n\cap T)(v\not\prefix wi\mbox{ and }wi\not\prefix v)\}.\]
Intuitively, $P_{n,i}$ is the set of those nodes $w$ whose extension $wi$ is not on a path of $T$ of length
$n$.
Now we define a partial function $m:\In\{0,1\}^*\to\IN$ for any word $w\in\{0,1\}^*$
by
\[m(w):=\min\{n\in\IN:w\in P_{n,0}\cup P_{n,1}\}.\]
Intuitively, $m(w)$ is the shortest length such that $w0$ or $w1$ is not on a path of $T$ of that length.
Now we construct a sequence $q_w\in\{0,1\}^\IN$ for any word $w\in\{0,1\}^*$ 
as follows:
\[q_w:=\left\{\begin{array}{ll}
0^{2n}10^\IN   & \mbox{if $n=m(w)$ exists and $w\in P_{n,0}\setminus P_{n,1}$}\\
0^{2n+1}10^\IN & \mbox{if $n=m(w)$ exists and $w\in P_{n,1}\setminus P_{n,0}$}\\
0^\IN          & \mbox{otherwise}
\end{array}\right..\]
Given the binary tree $T$, we can actually compute the sequence $\langle q_{w_0},q_{w_1},...\rangle$.
Moreover, we obtain for all $w\in T$ and $i\in\{0,1\}$
\begin{eqnarray*}
       i\in\LLPO(q_w)
&\TO& (\forall n)\;q_w(2n+i)=0\\
&\TO& (\forall n)(\exists v\in\{0,1\}^n\cap T)(v\prefix wi\mbox{ or }wi\prefix v)\\
&\TO& (\exists p\in[T])\;wi\prefix p.
\end{eqnarray*}
By Weak K\H{o}nig's Lemma we obtain an infinite path $p\in[T]$ inductively,
by selecting $p(0)=i$ such that $i\in\LLPO(q_\varepsilon)$ for the empty word $\varepsilon$ 
and given a prefix $p[n]$
we choose $p(n)=i$ such that $i\in\LLPO(q_{p[n]})$.
Given a realizer of $\widehat{\LLPO}$, we can determine some
$r\in\widehat{\LLPO}\langle q_{w_0},q_{w_1},...\rangle$ and using
the inductive method that we just described, we can actually
compute an infinite path with the help of $r$.
Altogether, this shows $\WKL\leqSW\widehat{\LLPO}$.

Now we prove $\widehat{\LLPO}\leqSW\WKL$.
Given $p=\langle p_0,p_1,p_2,...\rangle\in\dom(\widehat{\LLPO})$, we want to
find some $r\in\widehat{\LLPO}(p)$.
For this purpose we construct a binary tree $T$ that can be computed from $p$:
\[T:=\{i_0...i_{n-1}\in\{0,1\}^{n}:(\forall m,k<n)\;p_m(2k+i_m)=0\}.\]
An infinite path $r$ of this binary tree $T$ satisfies the desired condition
since 
\[r(n)=i\iff(\forall k)\;p_n(2k+i)=0\iff i\in\LLPO(p_n)\]
for all $n\in\IN$ and $i\in\{0,1\}$. Thus, given a realizer of $\WKL$,
we can compute the desired $r$. Altogether, this proves $\widehat{\LLPO}\leqSW\WKL$.
\end{proof}

In \cite{GM09} it has been proved that the Hahn-Banach Theorem $\HBT$ has
the same Weihrauch degree as $\WKL$ and hence the same Weihrauch degree as $\widehat{\LLPO}$.
We formulate this as a corollary without exactly specifying $\HBT$ (the reader
is referred to \cite{GM09} for details).

\begin{corollary}
\label{cor:hahn-banach}
$\HBT\equivW\WKL\equivW\widehat{\LLPO}$.
\end{corollary}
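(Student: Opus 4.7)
The plan is essentially to assemble two equivalences that have already been (or are about to be) established and chain them through transitivity of $\equivW$. The second equivalence, $\WKL\equivW\widehat{\LLPO}$, follows immediately from the preceding Theorem~\ref{thm:WKL}, where the strong version $\WKL\equivSW\widehat{\LLPO}$ has just been proved; by Proposition~\ref{prop:reducibility-computability}(1), $\leqSW$ implies $\leqW$ in both directions, so no extra work is needed here.

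The first equivalence, $\HBT\equivW\WKL$, is cited from \cite{GM09} and so the plan is simply to appeal to that result. Since the paper does not formalize $\HBT$ itself in the excerpt (the reader is redirected to \cite{GM09} for the precise representation of normed spaces, functionals, etc.), the proof does not need to reconstruct this step; it suffices to invoke the external classification.

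Finally I would close by observing that by Lemma~\ref{lem:preorder} the relation $\equivW$ is an equivalence relation, in particular transitive, so chaining $\HBT\equivW\WKL$ with $\WKL\equivW\widehat{\LLPO}$ yields the full statement $\HBT\equivW\WKL\equivW\widehat{\LLPO}$. There is no genuine obstacle here: the corollary is literally a recombination of Theorem~\ref{thm:WKL} with the result of \cite{GM09}, and the only minor point to mention is the passage from strong to ordinary Weihrauch reducibility via Proposition~\ref{prop:reducibility-computability}.
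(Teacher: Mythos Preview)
Your proposal is correct and matches the paper's own justification essentially verbatim: the paper states just before the corollary that $\HBT\equivW\WKL$ is taken from \cite{GM09} and then combines this with Theorem~\ref{thm:WKL}, exactly as you do. Your additional remarks about passing from $\equivSW$ to $\equivW$ via Proposition~\ref{prop:reducibility-computability}(1) and about transitivity via Lemma~\ref{lem:preorder} are harmless explicitations of steps the paper leaves implicit.
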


Another equivalence that has been proved in \cite{GM09} is that all the 
aforementioned theorems are equivalent to compact choice in rich spaces. 
We will use this observation and we adapt the formulation to our context.

\begin{definition}[Compact choice]\rm
Let $X$ be a computable metric space.
The multi-valued operation 
\[\C_{\KK(X)}:\In\KK_-(X)\mto X,A\mapsto A\]
with $\dom(\C_{\KK(X)}):=\{A\In X:A\not=\emptyset$ compact$\}$ is called {\em compact choice} of $X$.
\end{definition}

Here $\KK_-(X)$ denotes the set of compact subsets of $X$, which is equipped
with the negative information representation $\kappa_-$ 
(here a name of a compact set $K$ is a list of all finite open rational covers of $K$, 
see \cite{BP03} for details).
In some sense, $\WKL$ is compact choice for the Cantor space $\{0,1\}^\IN$ and,
in fact, in \cite{GM09} it has been proved that compact choice for a large class
of computable metric space is equivalent to $\C_{\KK(\{0,1\}^\IN)}\equivW\WKL$.
Using this result we prove a slightly different result here adapted to our operations.

\begin{theorem}[Compact choice]
\label{thm:CK}
Let $X$ be a computable metric space. Then $\C_{\KK(X)}\leqSW\widehat{\LLPO}$.
If $X$ is rich, i.e.\ if there is a computable embedding $\iota:\{0,1\}^\IN\into X$,
then $\C_{\KK(X)}\equivSW\widehat{\LLPO}$.
\end{theorem}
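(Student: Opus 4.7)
My strategy is to leverage Theorem~\ref{thm:WKL}, which gives $\widehat{\LLPO}\equivSW\WKL$. Both parts of the claim then transfer to statements about Weak K\H{o}nig's Lemma and compact choice, in parallel with the approach of \cite{GM09}: the first reduction becomes $\C_{\KK(X)}\leqSW\WKL$, and, under the richness assumption, the second becomes $\WKL\leqSW\C_{\KK(X)}$.

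For $\C_{\KK(X)}\leqSW\WKL$, fix a computable dense sequence $(a_n)_{n\in\IN}$ in $X$. From a $\kappa_-$-name of a non-empty compact $K\In X$ one can computably enumerate all basic rational open balls disjoint from $K$ (the $\psi_-$-information; cf.\ \cite{BP03}) and one can also extract, for each $n$, a finite set $F_n\In\IN$ such that $\{B(a_i,2^{-n}):i\in F_n\}$ covers $K$. I would build a finitely branching tree $\widetilde T$ whose nodes at level $n$ are sequences $(i_0,\ldots,i_{n-1})\in F_0\times\cdots\times F_{n-1}$ satisfying (i) the metric compatibility $d(a_{i_j},a_{i_{j+1}})\leq 2^{-j}+2^{-(j+1)}$ for $j<n-1$, and (ii) no ball $B(a_{i_j},2^{-j})$ has been enumerated as disjoint from $K$ by stage $n$. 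Any point $y\in K$ produces an infinite path by density, so $\widetilde T$ is infinite. Conversely, an infinite path in the limit tree yields a Cauchy sequence $(a_{i_j})$ whose limit lies in $K$, because each surviving ball meets $K$ and $K$ is closed. A standard binary encoding of $\widetilde T$ gives the desired tree $T$, and computably extracting a point of $K$ from any infinite path is then routine, establishing $\C_{\KK(X)}\leqSW\WKL$.

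For $\WKL\leqSW\C_{\KK(X)}$, assume a computable embedding $\iota:\{0,1\}^\IN\into X$, so that $\iota^{-1}$ is computable on $\iota(\{0,1\}^\IN)$. Given an infinite binary tree $T$, the set $[T]\In\{0,1\}^\IN$ is non-empty compact with $\kappa_-$-name computable from $T$. Since $\iota$ is computable and continuous, the induced direct-image map on compact sets is computable $(\kappa_-,\kappa_-)$, so $\iota([T])\In X$ has a $\kappa_-$-name computable from $T$. Applying $\C_{\KK(X)}$ to $\iota([T])$ returns some $\iota(p)$ with $p\in[T]$, and $\iota^{-1}$ computably recovers $p$. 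Combining this with Theorem~\ref{thm:WKL} gives $\C_{\KK(X)}\equivSW\widehat{\LLPO}$ in the rich case.

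The main technical obstacle is the tree construction in the first reduction. We have only negative information about $K$, so we cannot directly certify that a candidate ball meets $K$; we can only wait to see whether it gets enumerated as disjoint. The construction must therefore be permissive—keeping a node alive whenever no contradictory information has yet surfaced—while total boundedness of $K$ supplies enough centers to cover $K$ at every resolution and keep the tree infinite. The delicate point is balancing this permissiveness with a shrinking-radius condition strong enough to force Cauchy convergence to a genuine point of $K$ (not merely of $\overline{K}$, though here they agree); the binary encoding of the finitely branching tree is then a standard bookkeeping step.
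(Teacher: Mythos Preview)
Your approach is correct, and for the second reduction (the rich case) it coincides with the paper's argument almost verbatim: push $[T]$ forward along $\iota$, apply $\C_{\KK(X)}$, and pull back with the computable partial inverse $\iota^{-1}$.

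For the first reduction the two arguments diverge. The paper does not build a tree at all; it invokes Theorem~8.3 of \cite{GM09}, which already establishes $\mathrm{Sel}_{\KK(X)}\leqW\WKL$ for the related selection operator, observes $\C_{\KK(X)}(K)=\mathrm{Sel}_{\KK(X)}(K,K)$ via the computable injection $\KK_-(X)\hookrightarrow\AA_-(X)$, and then upgrades the resulting $\leqW$ to $\leqSW$ using that $\widehat{\LLPO}$ is a cylinder (Proposition~\ref{prop:LLPO-cylinder} and Corollary~\ref{cor:cylinder-reduction}). Your route is more self-contained: you reconstruct (essentially) the tree argument that underlies the cited result and aim for $\leqSW$ directly. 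That is a legitimate trade—independence from \cite{GM09} and from the cylinder machinery, at the cost of more bookkeeping.

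Two technical points in your construction need attention. First, condition (i) as written, $d(a_{i_j},a_{i_{j+1}})\leq 2^{-j}+2^{-(j+1)}$, is not decidable in a computable metric space; replace it by a decidable surrogate (e.g.\ a rational bound on a $2^{-n}$--approximation of the distance) that still forces the Cauchy condition. Second, to obtain a \emph{strong} reduction your outer map $H$ must recover the limit point from the binary path alone, without access to the sets $F_n$; this forces a self-delimiting encoding of the indices $i_j$ (so that the path itself yields $(a_{i_j})_j$ and hence a Cauchy name). Both fixes are routine, but without them your construction as stated only gives $\leqW$, and you would then need the paper's cylinder step after all.
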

\begin{proof}
In Theorem~8.3 of \cite{GM09} the operation 
\[{\mathrm{Sel}}_{\KK(X)}:\In\KK_-(X)\times\AA_-(X)\mto X,(K,A)\mapsto A\]
with $\dom({\mathrm{Sel}}_{\KK(X)})=\{(K,A):\emptyset\not= A\In K\}$
has been considered and it has been proved that ${\mathrm{Sel}}_{\KK(X)}\leqW\WKL$.
Here $\AA_-(X)$ is the set of closed
subsets of $X$ represented with the negative information representation
$\psi_-$ (see \cite{BP03}). 
In fact, in \cite{GM09} a stronger representation $\kappa$
of the set $\KK(X)$ of compact subsets of $X$ has been used, but a careful inspection of the proof shows
that only $\kappa_-$--information has been exploited. 
Since the injection
$\KK_-(X)\into\AA_-(X)$ is $(\kappa_-,\psi_-)$--computable (see Theorem~4.8 in \cite{BP03})
and $\C_{\KK(X)}(K)={\mathrm{Sel}}_{\KK(X)}(K,K)$ we obtain
\[\C_{\KK(X)}\leqW{\mathrm{Sel}}_{\KK(X)}\leqW\WKL\equivW\widehat{\LLPO}\]
according to Theorem~\ref{thm:WKL}. Since $\widehat{\LLPO}$ is a cylinder,
this implies the strong reducibility $\C_{\KK(X)}\leqSW\widehat{\LLPO}$.

Now let us assume that $X$ is additionally rich, i.e.\ 
there is a computable embedding $\iota:\{0,1\}^\IN\into X$.
It is clear that $\KK_-(\{0,1\}^\IN)\to\KK_-(X),A\mapsto\iota(A)$ is $(\kappa_-,\kappa_-)$--computable
(see \cite{Wei03}). 
In the proof of Theorem~8.3 of \cite{GM09} it was already shown that $\WKL\leqSW\C_{\KK(\{0,1\}^\IN)}$.
Similarly as in that proof we obtain $\C_{\KK(\{0,1\}^\IN)}(A)=\iota^{-1}\C_{\KK(X)}(\iota(A))$.
Since the partial inverse $\iota^{-1}:\In X\to\{0,1\}^\IN$ is computable (by Corollary~6.5 in \cite{Bra08}),
we can conclude with Theorem~\ref{thm:WKL}
\[\widehat{\LLPO}\equivSW\WKL\leqSW\C_{\KK(\{0,1\}^\IN)}\leqSW\C_{\KK(X)}.\]
Thus $\widehat{\LLPO}\equivW\C_{\KK(X)}$ for rich computable metric spaces $X$.
\end{proof}

The characterization of $\widehat{\LLPO}$ as compact choice allows us to prove
a characterization of weakly computable operations. As a preparation we need
to prove the following lemma. It shows that a compact union of compact sets
can be computed and it generalizes an observation made in \cite{Bra01a}.

\begin{lemma}[Compact union]
\label{lem:compact-union}
Let $Y,Z$ be computable metric spaces and $X$ some represented space. 
Let $f:\In X\to\KK_-(Z)$ and $g:\In Z\to\KK_-(Y)$ be computable.
Then
\[h:\In X\to\KK_-(Y),x\mapsto\bigcup_{z\in f(x)}g(z)\]
is computable, where $\dom(h):=\{x\in X:f(x)\In\dom(g)\}$.
In particular, $h$ is well-defined.
\end{lemma}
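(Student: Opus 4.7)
The plan is to produce a $\kappa_-$-name of $h(x)$ directly from a name of $x$: such a name is just an enumeration of finite open rational covers of $h(x)$, so it suffices to semi-decide $h(x)\In U$ for every finite open rational union $U\In Y$. This is literally equivalent to the condition that $g(z)\In U$ for every $z\in f(x)$. By hypothesis $f(x)\In\dom(g)$, and since $g$ is $\kappa_-$-computable, the assertion $g(z)\In U$ is semi-decidable uniformly in a name of $z\in\dom(g)$. Each positive semi-decision depends only on a finite prefix of the name of $z$ and hence witnesses that $z$ lies in some basic open ball of $Z$; we therefore obtain a c.e.\ enumeration of basic open balls $B_1,B_2,\ldots$ in $Z$ with
\[V_U:=\{z\in\dom(g):g(z)\In U\}=\dom(g)\cap\bigcup_iB_i.\]

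The algorithm for $h$ then dovetails as follows. In parallel we enumerate the balls $B_i$ and all their finite subfamilies, and using the $\kappa_-$-name of $f(x)$ we semi-decide $f(x)\In B_{i_1}\cup\ldots\cup B_{i_k}$. Whenever such a semi-decision succeeds we emit $U$ as a finite rational open cover of $h(x)$. Soundness uses the hypothesis $f(x)\In\dom(g)$: if the $B_{i_j}$ cover $f(x)$, then every $z\in f(x)$ lies in some $B_{i_j}\cap\dom(g)\In V_U$, whence $g(z)\In U$ and thus $h(x)\In U$. Completeness uses compactness of $f(x)$: if $h(x)\In U$, then $f(x)\In V_U\In\bigcup_iB_i$, so a finite subcover exists.

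It remains to verify that $h(x)$ is genuinely compact, otherwise the object we have named is not in $\KK_-(Y)$ at all. Given any open cover $\UU$ of $h(x)$, compactness of each $g(z)$ yields a finite $\UU_z\In\UU$ with $g(z)\In V_z:=\bigcup\UU_z$; the sets $W_z:=\{z'\in\dom(g):g(z')\In V_z\}$ are open in $\dom(g)$ by the effective upper semicontinuity noted above, and they cover $f(x)$; compactness of $f(x)\In\dom(g)$ selects finitely many $W_{z_1},\ldots,W_{z_m}$, and then $\bigcup_{j}\UU_{z_j}\In\UU$ is a finite subcover of $h(x)$.

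The main delicate point is the interplay with $\dom(g)$: the balls $B_i$ may well intersect $Z\sm\dom(g)$, and without the assumption $f(x)\In\dom(g)$ the semi-decision above would be unsound (since the machine for $g$ behaves arbitrarily off $\dom(g)$). Once the role of the partial domain is pinned down, the argument is a straightforward effectivization of the classical fact that a compact union of a compact-valued upper semicontinuous family is compact, and relies only on standard properties of the representation $\kappa_-$ on computable metric spaces as recorded in \cite{BP03}.
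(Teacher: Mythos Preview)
Your proof is correct and follows essentially the same route as the paper: both reduce the semi-decision of $h(x)\In U$ to the semi-decision of $f(x)\In V_U$ where $V_U=\{z\in\dom(g):g(z)\In U\}$, using that $V_U$ is effectively open in $\dom(g)$ by $\kappa_-$--computability of $g$ and that the $\kappa_-$--name of $f(x)$ then detects inclusion. The paper packages the step ``$V_U$ is effectively open'' as computability of the map $G:\OO(Y)\to\OO(\dom(g)),\,U\mapsto V_U$ and cites Michael's theorem for well-definedness, whereas you unwind both points explicitly via the ball enumeration and a direct compactness argument; the underlying idea is the same.
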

\begin{proof}
Firstly, $h$ is well-defined, for instance by Corollary~9.6 in \cite{Mic51}.
Since $g:\In Z\to\KK_-(Y)$ is $(\delta_Z,\kappa_-)$--computable, it follows
that 
\[G:\OO(Y)\to\OO(\dom(g)),U\mapsto\{z\in Z:g(z)\In U\}\]
is $(\vartheta,\vartheta)$--computable. Here $\OO(Y)$ denotes the set of open
subsets of $Y$ represented by $\vartheta$, where a name of an open set $U\In Y$
is a list of open balls $B(x_i,r_i)$ with $U=\bigcup_{i=0}^\infty B(x_i,r_i)$ 
with centers $x_i$ in the dense subset of $Y$ and rational radii $r_i$.
Analogously, $\OO(\dom(g))$ denotes the open subsets of $\dom(g)$ with respect
to the subspace topology of $Z$ and a name of an open subset $V\In\dom(g)$ 
is a list of rational balls of $Z$ with $V=\dom(g)\cap\bigcup_{i=0}^\infty B(x_i,r_i)$.
We obtain
\[h(x)=\bigcup_{z\in f(x)}g(z)\In U\iff(\forall z\in f(x))\;g(z)\In U\iff f(x)\In G(U)\]
for all open $U\In Y$ and $x\in\dom(f)$ such that $f(x)\In\dom(g)$.
Since $f$ is $(\delta_X,\kappa_-)$--computable it follows that the right-hand side condition
is c.e.\ in $x$ and $U$. Hence it follows that the left-hand side
condition has the same property and $h$ is $(\delta_X,\kappa_-)$--computable.
\end{proof}

Now we are prepared to prove the characterization of weakly computable operations.
We say that a function $s:\In X\to\KK_-(Y)$ is a {\em selector} of a function
$f:\In X\mto Y$, if $\dom(s)=\dom(f)$ and $s(x)\In f(x)$ for all $x\in\dom(f)$.
Continuous functions $s:\In X\to\KK_-(Y)$ 
are also called {\em upper semi-continuous}.

\begin{theorem}[Selection]
\label{thm:selection}
Let $X$ be a represented space and let $Y$ be a computable metric space.
A function $f:\In X\mto Y$ is weakly computable if and only if $f$ admits a computable
selector $s:\In X\to\KK_-(Y)$.
\end{theorem}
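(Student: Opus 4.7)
\medskip
\noindent\textbf{Proof plan.}
The proof will split into two implications, and I will use the previously proved equivalence $\C_{\KK(Y)}\leqSW\widehat{\LLPO}$ from Theorem~\ref{thm:CK} together with the compact union Lemma~\ref{lem:compact-union} and the upper semi-computability of $\widehat{\LLPO}$ from Lemma~\ref{lem:compact-LLPO}.

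For the direction ``$\Leftarrow$'', suppose $s:\In X\to\KK_-(Y)$ is a computable selector of $f$. By Theorem~\ref{thm:CK} compact choice satisfies $\C_{\KK(Y)}\leqSW\widehat{\LLPO}$, so there are computable $H,K$ witnessing that any realizer of $\widehat{\LLPO}$ computes a realizer of $\C_{\KK(Y)}$. Given a name of $x\in\dom(f)$, we first compute a $\kappa_-$-name of $s(x)$, then use this name as input to $K$ in the reduction in order to extract some $y\in s(x)\In f(x)$ from any realizer of $\widehat{\LLPO}$. Composing the computable maps yields $f\leqW\C_{\KK(Y)}\leqW\widehat{\LLPO}$.

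For the direction ``$\Rightarrow$'', suppose $f\leqW\widehat{\LLPO}$, so that there are computable $H,K:\In\IN^\IN\to\IN^\IN$ such that $H\langle\id,GK\rangle$ is a realizer of $f$ for every realizer $G$ of $\widehat{\LLPO}$. Given a name $p$ of some $x\in\dom(f)$, the plan is to first compute $q:=K(p)$, then by Lemma~\ref{lem:compact-LLPO} compute the compact set $A_p:=\widehat{\LLPO}(q)\in\KK_-(\{0,1\}^\IN)$. For every $r\in A_p$ there exists a realizer $G$ of $\widehat{\LLPO}$ with $G(q)=r$; the reduction therefore forces $H\langle p,r\rangle\in\dom(\delta_Y)$ with $\delta_Y H\langle p,r\rangle\in f(x)$. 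The continuous map $\Phi_p:A_p\to Y$, $r\mapsto\delta_Y H\langle p,r\rangle$ is defined on all of the compact set $A_p$ and takes values in $f(x)$, so its image $s(x):=\Phi_p(A_p)$ is a compact subset of $f(x)$. Using Lemma~\ref{lem:compact-union} with the computable compact-valued map $p\mapsto A_p$ and the computable singleton-valued map $r\mapsto\{\Phi_p(r)\}$, we obtain that $x\mapsto s(x)$ is computable as a map into $\KK_-(Y)$, which is the desired selector.

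The only real delicacy is showing that the construction in the second part is uniform enough to yield a genuine function $s:\In X\to\KK_-(Y)$ rather than one whose value depends on the chosen name $p$; this is handled by observing that every step in the construction (application of $K$, Lemma~\ref{lem:compact-LLPO}, the continuous image under $H$, and $\delta_Y$) is a computable operation at the level of realizers, so the composition produces a $\kappa_-$-name of a compact subset of $f(x)$ from any name of $x$, and any such compact set lies in $f(x)$, which is all that is required of a selector. The main work beyond this is the book-keeping to see that the image of the compact set $A_p$ under $\Phi_p$ is computable in $\KK_-(Y)$, for which Lemma~\ref{lem:compact-union} provides exactly the right tool.
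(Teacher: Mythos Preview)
Your ``$\Leftarrow$'' direction is fine and essentially matches the paper. The gap is in the ``$\Rightarrow$'' direction, precisely at the point you single out as ``the only real delicacy'' and then wave away.

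The theorem asks for a single-valued computable function $s:\In X\to\KK_-(Y)$, so $s(x)$ must depend on $x$ alone. Your construction produces, from each \emph{name} $p$ of $x$, a compact set $s_p:=\delta_YH\langle\{p\}\times A_p\rangle\subseteq f(x)$. There is no reason whatsoever that $s_p=s_{p'}$ for two different names $p,p'$ of the same $x$: the reduction functions $H,K$ act on names, and in general $K(p)\neq K(p')$, hence $A_p\neq A_{p'}$, hence $s_p\neq s_{p'}$. What you have actually built is a computable realizer of a \emph{multi-valued} map $X\mto\KK_-(Y)$ taking values inside $f(x)$; your sentence ``any such compact set lies in $f(x)$, which is all that is required of a selector'' is simply false, because a selector by definition is a single-valued function on $X$.

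The paper fixes exactly this. It first replaces $\delta_X$ by an equivalent representation (Schr\"oder's) whose fibres $\delta_X^{-1}\{x\}\subseteq\{0,1\}^\IN$ are compact and for which $x\mapsto\delta_X^{-1}\{x\}$ is $(\delta_X,\kappa_-)$--computable. It then aggregates over \emph{all} names of $x$: using Lemma~\ref{lem:compact-union} one forms the compact union $h(x)=\bigcup_{p\in\delta_X^{-1}\{x\}}\kappa_-H(p)$, and then takes the $\delta_Y$--image of the compact set $\delta_X^{-1}\{x\}\times h(x)$ under the outer reduction map. Because the union ranges over the whole fibre, the result depends only on $x$, and compactness of the fibre is what makes the union compact and computably so. This passage to a representation with computable compact fibres is the missing idea in your argument; without it you do not obtain a function on $X$ at all.
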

\begin{proof}
Let $s:\In X\to\KK_-(Y)$ be a computable selector of $f$. Then any realizer of 
$\C_{\KK(Y)}\circ s$ is also a realizer of $f$ and hence by Theorem~\ref{thm:CK}
we obtain
\[f\leqW\C_{\KK(Y)}\circ s\leqW\C_{\KK(Y)}\leqW\widehat{\LLPO}.\]
Thus, $f$ is weakly computable.

Let $\delta_X$ be the representation of $X$ and let $\delta_Y$ be the Cauchy representation
of $Y$.  
Now let $f:\In X\mto Y$ be weakly computable. Then by Theorem~\ref{thm:CK} we obtain
$f\leqW\widehat{\LLPO}\equivW\C_{\KK(\{0,1\}^\IN)}$
and hence there are computable functions $H$ and $K$ such that
$K\langle\id,GH\rangle$ is a $(\delta_X,\delta_Y)$--realizer of $f$ for any 
$(\kappa_-,\id|_{\{0,1\}^\IN})$--realizer $G$ of $\C_{\KK(\{0,1\}^\IN)}:\In\KK_-(\{0,1\}^\IN)\mto\{0,1\}^\IN$. 
Without loss of generality we can assume that $\delta_X$ is a representation
of $X$ with compact fibers $\delta_X^{-1}\{x\}$ and such that 
$X\to\KK_-(\{0,1\}^\IN),x\mapsto\delta_X^{-1}\{x\}$ is $(\delta_X,\kappa_-)$--computable.
An example of such a representation is Schr\"oder's representation (see Theorem~4.4 in \cite{Wei03}).
Since $H$ is computable, it follows with Lemma~\ref{lem:compact-union} that
\[h:\In X\to\KK_-(\{0,1\}^\IN),x\mapsto\bigcup_{p\in\delta_X^{-1}\{x\}}\kappa_-H(p)\]
with $\dom(h)=\dom(f)$ is $(\delta_X,\kappa_-)$--computable since $\delta_X^{-1}\{x\}\In\dom(H)$ for all $x\in\dom(f)$.
Since $K$ is computable and the Cartesian product operation 
$\times:\KK_-(\{0,1\}^\IN)\times\KK_-(\{0,1\}^\IN)\to\KK_-(\{0,1\}^\IN),(A,B)\mapsto \langle A\times B\rangle$
is $(\kappa_-,\kappa_-,\kappa_-)$--computable,
it follows from Theorem~3.3 in \cite{Wei03} that
\[s:\In X\to\KK_-(Y),x\mapsto\delta_YK\langle \delta_X^{-1}\{x\}\times h(x)\rangle\]
is $(\delta_X,\kappa_-)$--computable and $s$ is a selector of $f$.
\end{proof}

It is known that for computable metric spaces $(Y,\delta_Y)$ the singleton operation 
$Y\to\KK_-(Y),y\mapsto\{y\}$ that maps a point to the corresponding singleton set 
is $(\delta_Y,\kappa_-)$--computable and it admits a $(\kappa_-,\delta_Y)$--computable
right inverse (see for instance Lemma~6.4 in \cite{Bra08}). 
Thus we obtain the following corollary of the Selection Theorem~\ref{thm:selection}.

\begin{corollary}[Weakly computability]
\label{cor:single-valued-weakly-computable}
Let $X$ be a represented space and $Y$ a computable metric space.
Any weakly computable single-valued operation $f:\In X\to Y$ is computable.
\end{corollary}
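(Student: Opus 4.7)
The plan is to apply the Selection Theorem~\ref{thm:selection} directly and then exploit the computable right inverse of the singleton operation mentioned in the remark immediately preceding the corollary. Since $f:\In X\to Y$ is weakly computable, Theorem~\ref{thm:selection} provides a computable selector $s:\In X\to\KK_-(Y)$ with $\dom(s)=\dom(f)$ and $s(x)\In f(x)$ for all $x\in\dom(f)$. Because $f$ is single-valued, $f(x)$ is a singleton, so $s(x)\In\{f(x)\}$, and since the selector constructed in the proof of Theorem~\ref{thm:selection} is non-empty (it is obtained by applying $\delta_Y K$ to a non-empty compact set), we in fact have $s(x)=\{f(x)\}$.

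Next I would invoke the cited computability of the singleton operation $Y\to\KK_-(Y),\; y\mapsto\{y\}$ and, more importantly, the existence of a $(\kappa_-,\delta_Y)$-computable partial right inverse $\sigma:\In\KK_-(Y)\to Y$ with $\sigma(\{y\})=y$ for all $y\in Y$ (this is the standard fact recalled from \cite{Bra08}, Lemma~6.4). Composing $\sigma\circ s$ yields a computable function whose value at any $x\in\dom(f)$ is exactly $f(x)$. Hence $f=\sigma\circ s$ on $\dom(f)$, which shows that $f$ is computable.

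No real obstacle arises here: the only point that warrants care is verifying that the selector produced by Theorem~\ref{thm:selection} is genuinely non-empty, since the formal definition of selector allows empty values in principle. This is guaranteed by the explicit construction $s(x)=\delta_Y K\langle\delta_X^{-1}\{x\}\times h(x)\rangle$ from the proof of Theorem~\ref{thm:selection}, whose image is non-empty whenever $x\in\dom(f)$. Given single-valuedness of $f$, this forces $s(x)=\{f(x)\}$, and the right inverse of the singleton map then recovers $f(x)$ computably.
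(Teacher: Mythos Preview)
Your proof is correct and follows precisely the route the paper intends: apply the Selection Theorem to obtain a computable selector $s$, observe that single-valuedness of $f$ forces $s(x)=\{f(x)\}$, and then compose with the $(\kappa_-,\delta_Y)$--computable right inverse of the singleton map cited just before the corollary. Your explicit check that the selector from the proof of Theorem~\ref{thm:selection} takes non-empty values is a welcome bit of care that the paper leaves implicit.
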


Similarly, it follows that any weakly continuous single-valued function
is already continuous in the ordinary sense.
We close this section with a characterization of $\LLPO$ that is well-known
from constructive analysis (see for instance \cite{BR87a}).

\begin{lemma}[Real $\LLPO$]
We define 
\[\LLPO_\IR:\IR\mto\IN,\LLPO_\IR(x)\ni\left\{\begin{array}{ll}
       0 & \mbox{if $x\leq 0$}\\
       1 & \mbox{if $x\geq 0$}
       \end{array}\right.
\]
Then $\LLPO_\IR\equivSW\LLPO$.
\end{lemma}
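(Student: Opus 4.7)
The plan is to exhibit two explicit strong reductions. Since both $\LLPO$ and $\LLPO_\IR$ have outputs in $\IN$, both of which are represented via $\delta_\IN(p):=p(0)$, I can take the output-side map $H$ to be the identity in each direction and concentrate on constructing the input-side map $K$.

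For $\LLPO \leqSW \LLPO_\IR$, given $p\in\dom(\LLPO)$ (which has at most one nonzero entry), I would associate the real number $x(p):=(-1)^j 2^{-j}$ if $p(j)\neq 0$ and $x(p):=0$ otherwise. A fast Cauchy name is computable from $p$: at stage $n$, inspect $p(0),\ldots,p(n-1)$; if the unique nonzero position $j<n$ has been located, output the exact rational $(-1)^j 2^{-j}$, and otherwise output $0$ (the error is at most $2^{-n}$, since any nonzero of $p$ must sit at some position $\geq n$, giving $|x|\leq 2^{-n}$). Then $x\leq 0$ holds iff the nonzero entry (if any) sits at an odd position, which is iff $(\forall n)\,p(2n)=0$, which is iff $0\in\LLPO(p)$; symmetrically for $1$. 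So $\LLPO_\IR(x(p))=\LLPO(p)$, and $K:=p\mapsto x(p)$ together with $H=\id$ realizes the strong reduction.

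For $\LLPO_\IR \leqSW \LLPO$, given a $\rho$-name $(q_n)_{n\in\IN}$ of a real $x$ (with $|q_n-x|\leq 2^{-n}$), I define $p\in\IN^\IN$ by setting $p(2n):=1$ iff $q_n>2^{-n}$ and $|q_m|\leq 2^{-m}$ for all $m<n$, and $p(2n+1):=1$ iff $q_n<-2^{-n}$ and $|q_m|\leq 2^{-m}$ for all $m<n$, and $p(k):=0$ in every other case. Clearly this is computable from the name. The conditions guarantee that at most one entry of $p$ can be nonzero, so $p\in\dom(\LLPO)$. Now the case distinction: if $x=0$ then $|q_n|\leq 2^{-n}$ for every $n$, so $p=0^\IN$ and $\LLPO(p)=\{0,1\}=\LLPO_\IR(0)$; if $x>0$ then $q_n\geq x-2^{-n}\geq -2^{-n}$ always, and for $n$ large enough $q_n>2^{-n}$, so the first such $n$ yields $p(2n)=1$ as the unique nonzero, forcing $\LLPO(p)=\{1\}\subseteq\LLPO_\IR(x)$; the case $x<0$ is symmetric.

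The only delicate point is the bookkeeping in the second direction: I must ensure that the guards $|q_m|\leq 2^{-m}$ for $m<n$ prevent a second nonzero entry once a witness $q_{n^*}>2^{-n^*}$ (or $q_{n^*}<-2^{-n^*}$) has appeared, and that when $x>0$ no odd-position entry is ever activated (which is automatic from $q_m\geq -2^{-m}$). Everything else is a routine check that the constructions are continuous and computable with respect to the Cauchy representation, so no further ideas are needed.
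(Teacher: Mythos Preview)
Your proof is correct and follows essentially the same approach as the paper: in both directions the parity of the unique nonzero position of $p$ is matched with the sign of $x$, via $x=\pm 2^{-j}$ in one direction and a search for sign evidence from the Cauchy approximations in the other. Your second reduction is written out more explicitly (with the guard $|q_m|\le 2^{-m}$ for $m<n$ to ensure at most one nonzero), but the underlying idea is identical to the paper's exhaustive-search description.
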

\begin{proof}
We prove $\LLPO\leqSW\LLPO_\IR$. Given a sequence $p\in\dom(\LLPO)$
we let 
$k:=\min\{i\in\IN:p(2i)\not=0\mbox{ or }p(2i+1)\not=0\}$
and we define
\[x:=\left\{\begin{array}{ll}
2^{-k}  & \mbox{if $k$ exists and $p(2k)\not=0$}\\
-2^{-k} & \mbox{if $k$ exists and $p(2k+1)\not=0$}\\
0       & \mbox{if $k$ does not exist}
\end{array}\right.
\]
Given $p$ we can compute $x$ and $\LLPO_\IR(x)=\LLPO(p)$.
This proves the desired reduction $\LLPO\leqSW\LLPO_\IR$.

We now prove $\LLPO_\IR\leqSW\LLPO$. Given $x\in\IR$ 
compute a sequence $p\in\IN^\IN$ such that
\[p=\left\{\begin{array}{ll}
0^{2i}10^\IN  & \mbox{if $x<0$}\\
0^{2j+1}10^\IN & \mbox{if $x>0$}\\
0^\IN          & \mbox{if $x=0$}
\end{array}\right.\]
for some $i,j\in\IN$. 
Such $i,j$ can just be determined by an exhaustive search.
In stages $n=0,1,2,...$ one simultaneously tries to verify $x<0$ and $x>0$
and depending on which question is answered first, one uses the corresponding
stage as $i$ or $j$. As long as no answer is available, the algorithm
just produces zeros.
Then $\LLPO(p)=\LLPO_\IR(x)$ and hence
$\LLPO_\IR\leqSW\LLPO$.
\end{proof}

%The previous Corollary~\ref{cor:single-valued-weakly-computable}
%has some surprising consequences regarding the usage 
%of discontinuous and hence non-computable operations in algorithms.
%For instance, the classical proof of the Intermediate Value Theorem
%using tests of the $\LLPO_\IR$ form
%\[x\leq 0\mbox{ or }x\geq 0\]
%is considered as non-constructive, since this test is discontinuous.
%However, as a variant of $\LLPO$ it is weakly computable and hence
%one can freely use it in order to describe computations as long as
%the final result that is computed is single-valued.
%Analogously, one can apply compact choice, Weak K\H{o}nig's Lemma,
%the Hahn-Banach Theorem and any other weakly computable operation 
%in such an algorithm. Theorem~\ref{thm:LLPO-swap} shows that the
%compact choice can always be postponed to the end of the computation
%and if the computation describes a single-valued operation then
%the choice in the end can be computed.
%This is a consequence of the previous corollary and the fact that
%weakly computable operations are closed under composition.

\section{Conclusions}

In this paper we have studied Weihrauch reducibility of multi-valued
functions on represented spaces. Among other things, we have proved
that Weihrauch degrees form a lower semi-lattice with the direct sum
operation as greatest lower bound operation. Moreover, we have studied
parallelization as closure operator and we have shown that the parallelized
Weihrauch degrees even form a lattice with the product as least upper
bound operation. 
The Medvedev lattice and the upper semi-lattice of
Turing degrees can be embedded into the parallelized Weihrauch lattice.
Moreover, we have proved that the parallelized versions $\widehat{\LPO}$ and
$\widehat{\LLPO}$ of the limited principle of omniscience and the lesser
limited principle of omniscience, respectively, play a crucial role in our
lattice. While $\widehat{\LPO}$ is complete for the class of limit computable
operations, we have shown that $\widehat{\LLPO}$ can be used to define
a meaningful class of weakly computable operations that is closed under
composition. Single-valued weakly computable operations are already computable
in the ordinary sense. This fact could be related to conservativeness properties
of $\WKL_0$ in reverse mathematics \cite{Sim99,STY02} and to known uniqueness properties in 
constructive mathematics \cite{Koh93,Sch06c,Sch07d,Ish07}. 

In a forthcoming paper \cite{BG09b} we discuss the classification of the 
Weihrauch degree of many theorems
from analysis, such as the Intermediate Value Theorem, the Baire Category Theorem,
the Banach Inverse Mapping Theorem and many others.
It turns out that certain choice principles are crucial cornerstones for
that classification and we believe that our classification sheds new
light on the computational properties of these theorems.
In particular, our classification seems to be in a well-defined sense finer 
than other known classifications in constructive and reverse mathematics.

\bibliographystyle{asl}
\bibliography{../../bibliography/new/lit,local}

\end{document}